\tikzset{    
    mypoint/.style={
        circle,
        draw,
        inner sep=.3mm
        },  
    whitepoint/.style={
        fill=white, 
        mypoint
        },  
    blackpoint/.style={
        fill=black, 
        mypoint
        },  
    textnode/.style={
        text height=2.5ex, 
        text depth=1ex
        },  
    }
\pgfplotsset{compat=newest}
\newcommand{\R}{\mathbb{R}}
\newcommand{\N}{\mathbb{N}}
\newcommand{\x}{\mathbf{x}}
\newcommand{\X}{\mathbf{X}}
\newcommand{\y}{\mathbf{y}}
\newcommand{\Y}{\mathbf{Y}}
\newcommand{\Dgm}{\mathrm{Dgm}}
\newcommand{\z}{\mathbf{z}}
\newcommand{\basis}{\mathbf{b}}
\newcommand{\CDkernel}[2]{{K}^{#1}_{#2}}
\newcommand{\CDpol}[2]{{P}^{#1}_{#2}}
\newcommand{\samples}{\mathcal{X}}
\newcommand{\sampless}{\mathcal{Y}}
\newcommand{\dist}{\mathrm{dist}}
\newcommand{\persmod}[2]{\mathbb{CD}(#2, #1)}
\newtheorem{theorem}{Theorem}
\newtheorem{corollary}[theorem]{Corollary}
\newtheorem{lemma}[theorem]{Lemma}
\newtheorem{definition}[theorem]{Definition}
\newtheorem{proposition}[theorem]{Proposition}
\newtheorem{assumption}[theorem]{Assumption}
\theoremstyle{definition}
\newtheorem{example}[theorem]{Example}
\title{The Christoffel-Darboux kernel for topological data analysis}
\author{Pepijn Roos Hoefgeest\textsuperscript{*} \and Lucas Slot\textsuperscript{$\dagger$}}
\address{\textsuperscript{*}Vrije Universiteit (VU),  Amsterdam.}
\email{p.e.r.rooshoefgeest@vu.nl}
\address{\textsuperscript{$\dagger$}ETH Z\"urich \and Centrum Wiskunde \& Informatica (CWI), Amsterdam}
\email{lucas.slot@inf.ethz.ch}
\date{\today}
\begin{document}

\begin{abstract}
    Persistent homology has been widely used to study the topology of point clouds in $\R^n$. Standard approaches are very sensitive to outliers, and their computational complexity depends badly on the number of data points. 
    In this paper we introduce a novel persistence module for a point cloud using the theory of Christoffel-Darboux kernels. This module is robust to (statistical) outliers in the data, and can be computed in time linear in the number of data points. We illustrate the benefits and limitations of our new module with various numerical examples in $\R^n$, for $n=1, 2, 3$. Our work expands upon recent applications of
    Christoffel-Darboux kernels in the context of statistical data analysis and geometric inference~\cite{LPP2022}. There, these kernels are used to construct a polynomial whose level sets capture the geometry of a point cloud in a precise sense. We show that the persistent homology associated to the sublevel set filtration of this polynomial is stable with respect to the Wasserstein distance. Moreover, we show that the persistent homology of this filtration can be computed in singly exponential time in the ambient dimension $n$, using a recent algorithm of Basu~\&~Karisani~\cite{Basu2022}.
\end{abstract}

\maketitle

\section{Introduction}

Persistent homology is a central tool in the field of topological data analysis. It was developed in the early 2000s in order to extract topological and geometric information out of point-cloud data. Since discrete points in $\R^n$ do not have any meaningful topological features in and of themselves, one needs to find a way to construct an `interesting' topological space out of them. An obvious approach is to look at the collection of balls of radius $r$ centered around the data points. When the radius $r$ is chosen correctly, these balls will intersect in ways that reflect the topology of the set the data is sampled from. However, it is not clear a priori which radius should be chosen, and in fact, a \emph{single} `correct' choice need not even exist. The solution is to look at \emph{all} radii $r \geq 0$, and to track which topological features \emph{persist} over time as $r$ increases. More concretely: if $r \leq r'$, then the balls of radius $r$ include into those of radius $r'$, and this collection of inclusions forms what is called a \emph{filtration}. Persistent homology tracks `birth-death events' of homology classes in such a filtration, see Figure~\ref{FIG:persdia}.
Classical approaches to obtain a filtration of topological spaces out of a point cloud, such as the \emph{\v{C}ech filtration} (outlined above) and the \emph{Vietoris-Rips filtration}~\cite{zomorodian} suffer from two main problems:
\begin{enumerate}
    \item The complexity of  computing the persistent homology depends badly on the number of data points.
    \item The persistent homology of these filtrations is very sensitive to outliers in the data.
\end{enumerate}
These issues have been addressed in the literature in several ways.
\emph{Alpha complexes}~\cite{alphashapes} are used to compute the persistent homology of the \v{C}ech filtration efficiently by first intersecting the metric balls with a Voronoi diagram. 
\emph{Witness complexes}~\cite{WitnessComplex} build a small simplicial complex based on a subsample of the data, thus reducing computational complexity. Heuristically, these subsamples may be chosen to reduce sensitivity to outliers in the full data set, although this effect remains hard to quantify~\cite{RobustWitness}.
Chazal et al.~\cite{Chazal2011} introduce the \emph{distance-to-measure} function, which they apply to perform geometric inference of point clouds in $\R^n$. The key feature of this function is that it is stable with respect to the \emph{Wasserstein distance}, implying robustness to (statistical) outliers in the data. Buchet et al.~\cite{Buchet2016} use this property to construct a filtration which is also provably stable in this sense. However, it is hard to compute the associated persistent homology, and they therefore employ an approximation scheme.

In this paper, we propose a novel filtration based on so-called \emph{Christoffel-Darboux kernels}. 
As we explain in more detail below, the resulting persistent homology can be computed in \emph{linear time} in the number of data points, and is provably robust to statistical outliers. Christoffel-Darboux (CD) kernels have a long history in fundamental mathematics, with applications to orthogonal polynomials and in approximation theory (see \cite{LPP2022} for an overview). 
They are the reproducing kernels $\CDkernel{\mu}{d} : \R^n \times \R^n \to \R$ for the Hilbert space $\R[\x]_d$ of $n$-variate, real polynomials of degree $d \in \N$ with respect to the inner product $\langle p, q \rangle = \int pq d\mu$ induced by a finite measure~$\mu$ on~$\R^n$. Such reproducing kernels completely describe the inner product $\langle \cdot, \cdot \rangle$, and in our setting the kernels $\CDkernel{\mu}{d}$ thus capture information about the underlying measure~$\mu$. For instance, the \emph{Christoffel polynomial} $\CDpol{\mu}{d}(\x) := \CDkernel{\mu}{d}(\x, \x)$ can be used to estimate the support $\mathrm{supp}(\mu) \subseteq \R^n$ of $\mu$. Roughly speaking, $\CDpol{\mu}{d}(\x)$ is small when $\x \in \mathrm{supp}(\mu)$ and large when $\x \not\in \mathrm{supp}(\mu)$ (see Proposition~\ref{PROP:christfunc} below).
This property has recently been applied to perform geometric inference in a statistical setting by Lasserre, Pauwels and Putinar~\cite{Lasserre2019, LPP2022, Pauwels2021}. 
In these works, the authors consider CD kernels for the \emph{empirical} measure~$\mu_\samples$ associated to a set of samples $\samples$ drawn according to some unknown measure~$\mu$. For fixed $d \in \N$, the polynomial $\CDpol{\mu_{\samples}}{d}$ associated to $\samples$ is straightforward to compute. Moreover, the \emph{sublevel set} $\{ \x \in \R^n : \CDpol{\mu_{\samples}}{d}(\x) \leq t\}$ captures the support of $\mu$ well for suitably selected~$t \geq 0$, see Figure~\ref{FIG:CDlevelsetexamples}. 
However, a key issue of this approach is that the level $t \geq 0$ must be selected `by hand' based on heuristics, and the quality of geometric inference depends heavily on this choice. This problem motivates our use of persistent homology, which  considers all sublevel sets simultaneously.

\begin{figure}
    \centering
    \begin{subfigure}[c]{0.24\textwidth}
    \centering
    \includegraphics[width=\textwidth,trim=2.2cm 1cm 2.2cm 1cm,clip]{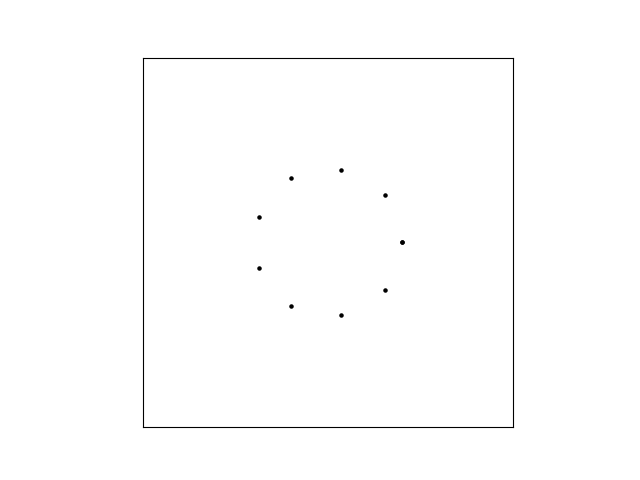}
    \caption{The samples $\samples$.}
    \end{subfigure}
    \hfill
    \begin{subfigure}[c]{0.24\textwidth}
        \centering
    \includegraphics[width=\textwidth,trim=2.2cm 1cm 2.2cm 1cm,clip]{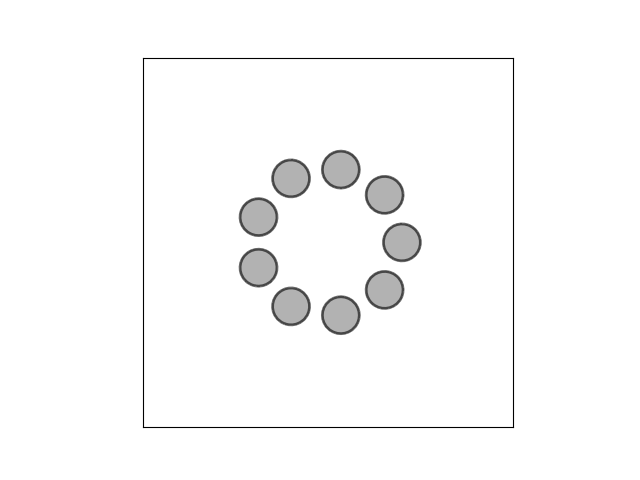}
        \caption{$\{ \x : d_\samples(\x) \leq 0.1\}$}
    \end{subfigure}
    \begin{subfigure}[c]{0.24\textwidth}
        \centering
    \includegraphics[width=\textwidth,trim=2.2cm 1cm 2.2cm 1cm,clip]{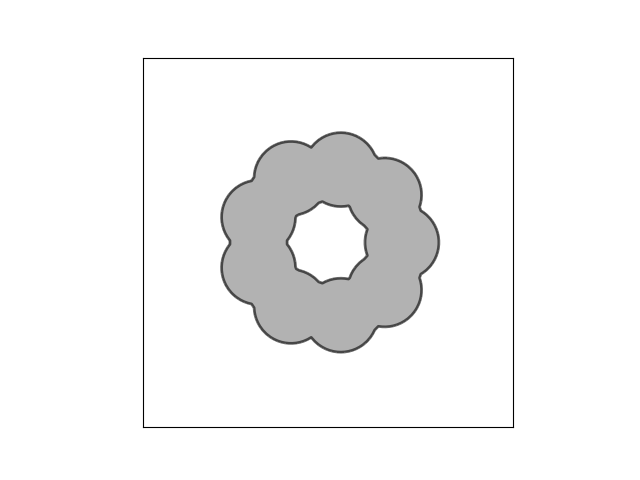}
        \caption{$\{ \x : d_\samples(\x) \leq 0.2 \}$}
    \end{subfigure}
    \hfill
    \begin{subfigure}[c]{0.24\textwidth}
        \centering
    \includegraphics[width=\textwidth,trim=2.2cm 1cm 2.2cm 1cm,clip]{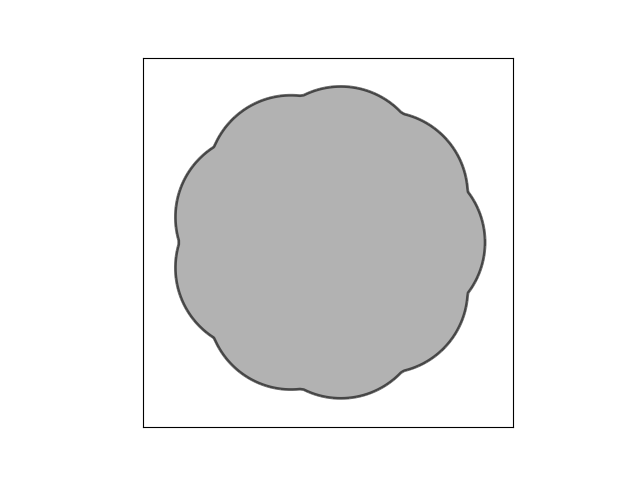}
        \caption{$\{ \x : d_\samples(\x) \leq 0.45 \}$}
    \end{subfigure}        \hfill
    \begin{subfigure}[b]{0.5\textwidth}
        \centering
        \begin{tikzpicture}[scale=2.5]
            \draw[-latex] (-.5,0) -- (2.6,0); \draw[dashed, lightgray] (0,0) -- (0,.8);
            \draw[dashed, lightgray] (1,0) -- (1,1.1);
            \draw[dashed, lightgray] (1.732,0) -- (1.732,1.1);
            \foreach \x in  {0,1,1.732}
            \node[blackpoint] at (\x,0){};
            % \foreach \x in  {0,1,2}
            % \node[textnode] at (\x,-.2) {\x};
            
            \node[textnode] at (1,-.2) {$t_1 \approx 0.2$};
            \node[textnode] at (1.732,-.2) {$t_2 = 0.4$};
            \node[textnode] at (2.5,-.2) {$t$};
            \node[whitepoint] (h01) at (0,.2){};
            \draw (0,.2) node[whitepoint] {} -- (2.6,.2);
            \foreach \x in  {.3,.4,...,1.0}
            \draw (0,\x) node[whitepoint] {} -- (1,\x) node[blackpoint] {};
            \draw (1,1.1) node[whitepoint] {} --  node[above]{$H_1$} (1.732,1.1) node[blackpoint] {};
            \draw (0,.1) -- ++(-.1,0) -- node[left]{$H_0$}  ++(0,1.0) -- ++(.1,0);
        \end{tikzpicture}
        \caption{The interval modules.}
    \end{subfigure}
    \hfill
    \begin{subfigure}[b]{0.35\textwidth}
        \centering
        \includegraphics[width=\textwidth,trim=2cm 0cm 3cm 0cm,clip]{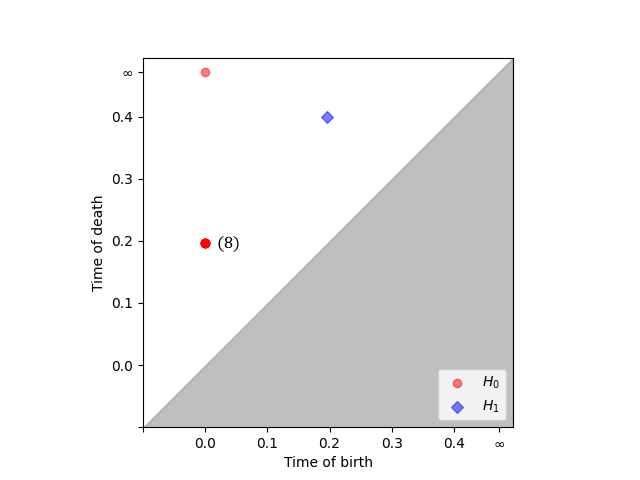}
        \caption{The persistence diagram.}
    \end{subfigure}
    \caption{A filtration of $[-1, 1]^2$ by the distance function $d_\samples : \x \mapsto 
    \mathrm{dist}(\x, \samples)$ to a set of equidistant points $\samples$ on a circle of radius $0.4$, and the corresponding persistence diagram. Note that there are $8$ intervals that are born at $t=0$ and die at $t \approx 0.2$, which show up as a single dot in the diagram. Throughout, we indicate the number of such overlapping dots in the diagram if necessary for clarity.}
    \label{FIG:persdia}
\end{figure}
\begin{figure}
    \centering
    \includegraphics[width=0.4\textwidth,trim=2.2cm 0cm 2.2cm 0cm,clip]{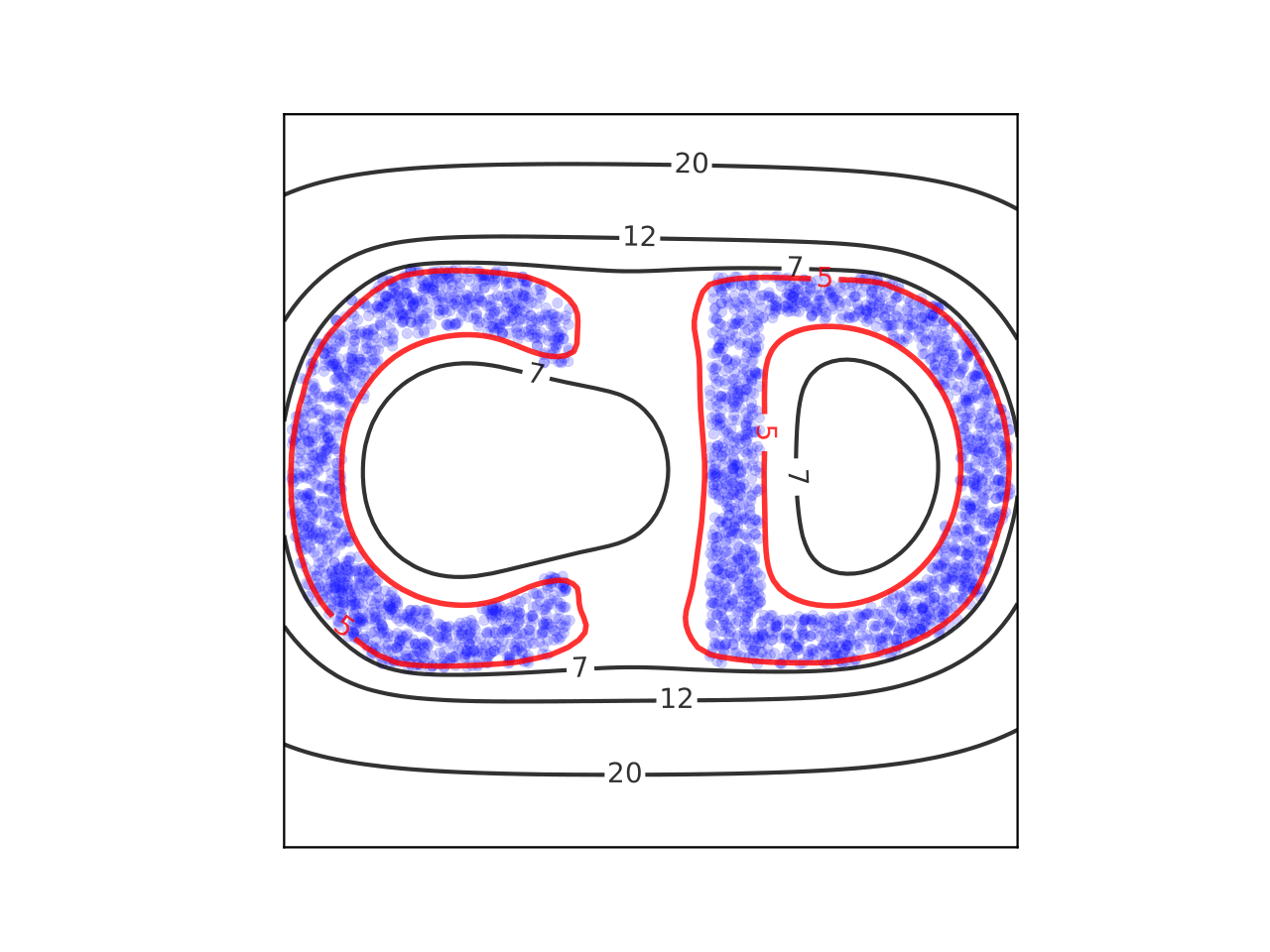}
    \includegraphics[width=0.4\textwidth,trim=2.2cm 0cm 2.2cm 0cm,clip]{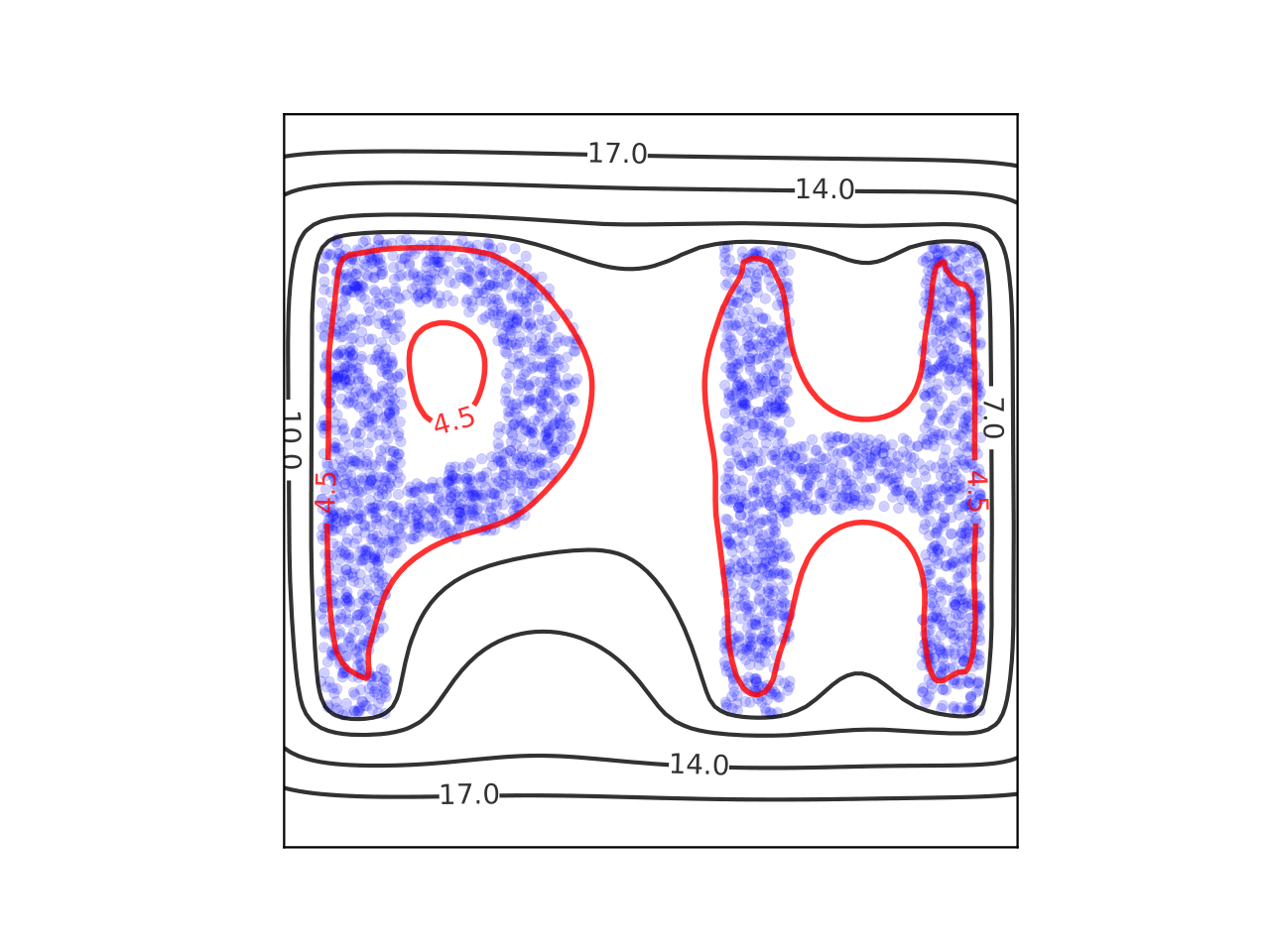}
        \caption{The level sets of the Christoffel polynomial $\x \mapsto \CDpol{\mu_{\samples}}{10}(\x)$ associated to the empirical measure $\mu_\samples$ of two sample sets $\samples \subseteq [-1, 1]^2$ (in blue). The level sets indicated in red capture the support of the underlying measure~$\mu$ quite well.}
    \label{FIG:CDlevelsetexamples}
\end{figure}

\subsection{Contributions and outline}
We propose a new scheme for topological data analysis of a finite point cloud $
\samples \subseteq [-1, 1]^n$, based on Christoffel-Darboux kernels. 
Our scheme unites recent applications of CD kernels in (statistical) data analysis with ideas from persistent homology. It consists of three steps:
\begin{enumerate}
    \item \textbf{Moment matrix}. Fix $d \in \N$. Choose a basis $\basis = (b_\alpha)$ for the space $\R[\x]_d$ of $n$-variate polynomials of degree at most~$d$. Compute the \emph{moment matrix} $M_d(\basis)$ of size ${n+d \choose d}$, whose entries can be computed from $\samples$ in linear time via:
    \[
        M_d(\basis)_{\alpha, \beta} := \frac{1}{|\samples|}\sum_{\x \in \samples} b_\alpha(\x) b_\beta(\x) \quad(\alpha, \beta \in \N^n,~|\alpha|, |\beta| \leq d).
    \]
    \item \textbf{Christoffel polynomial}. Invert the moment matrix to obtain the \emph{Christoffel polynomial}:
    \[
    \CDpol{}{d}(\x) := \basis(\x)^\top \big(M_d(\basis) \big)^{-1} \basis(\x),
    \]
    whose sublevel sets are known to approximate the set $\samples$, see Figure~\ref{FIG:CDlevelsetexamples}. 
    % $\mu_{\samples} := \frac{1}{|
% \samples|} \sum_{\x \in \samples} \delta_{\x}$.
    \item \textbf{Persistence module.} Define the \emph{sublevel set filtration}:
    \[
        \X_t := \{ \x \in [-1, 1]^n : \log \CDpol{}{d}(\x) \leq t\} \quad (t \geq 0),
    \]
    and compute its associated \emph{persistence module}: \[\persmod{d}{\samples} := \mathrm{\mathrm{PH}}_*([-1, 1]^n,~\log \CDpol{}{d}).
    \]
\end{enumerate}

\subsection*{Robustness to statistical outliers.} We show that the module $\persmod{d}{\samples}$ is stable and robust under perturbations of the input data $\samples$. To be precise, we show \emph{local} Lipschitz continuity of the function $\samples \mapsto \persmod{d}{\samples}$, in the \emph{Bottleneck} and \emph{Wasserstein} distance. We also give an estimate of the Lipschitz constant in terms of a concrete measure of algebraic degeneracy of the set $\samples$. This is our main technical result, see Section~\ref{SEC:stability}.

\subsection*{Exact algorithm with linear dependence on the number of samples.} We give an exact algorithm for computing the persistence module $\persmod{d}{\samples}$ in Section~\ref{SEC:algorithm}, whose runtime is linear in the number of data points, but depends exponentially on the dimension $n$. This algorithm is a combination of 1) a known procedure to compute CD kernels and 2) the recent work~\cite{Basu2022}, in which the authors propose an algorithm for computing the persistent homology of \emph{semialgebraic} filtrations. 

\subsection*{Numerical examples.} 
We provide several numerical examples in Section~\ref{SEC:experiments} that illustrate the geometric properties of our scheme, and its potential benefits and downsides compared to existing methods.
Unfortunately, there is no practical implementation available of the algorithm proposed in~\cite{Basu2022}. In order to perform numerical experiments, we therefore propose a simple scheme for approximating $\persmod{d}{\samples}$ in Section~\ref{SEC:approximation},  based on a triangulation of the sample space $[-1,1]^n$. These experiments show that our novel persistence module is able to accurately capture underlying homological features of point clouds, even in the presence of outliers.

\section{Background}

\subsection*{Notations and conventions.}
Throughout, $\x, \y, \z \in \R^n$ are $n$-dimensional variables. We denote by~$\R[\x]$ the $n$-variate polynomial ring. We write $\R[\x]_d \subseteq \R[\x]$ for the subspace of polynomials of (total) degree at most $d$, which has (real) dimension $s(n, d) = {n + d \choose d}$. 
% We denote by  $\|f\|_\infty$ = $\max_{\x \in [-1, 1]^n} |f(\x)|$  the supremum norm of a polynomial $f$ on~$[-1, 1]^n$. 
For ease of exposition, we assume throughout that sets of samples $\samples, \sampless$ are contained in the box $[-1, 1]^n$, which can always be achieved by a rescaling.

\subsection{Persistent homology}

Persistent homology is a central tool in topological data analysis, and has received a lot of attention over recent years \cite{persistencesurvey}. It serves to track homology classes through a diagram of spaces, typically arising from a filtration: 
Let $\X$ be a filtered topological space, that is, for each $t\in \R$, there is a subspace $\X_t \subseteq \X$, such that if $s\leq t$, $\X_s \subseteq \X_t$. For convenience, we assume that the filtration is exhaustive, i.e. $\bigcup_t \X_t = \X$. Applying homology with coefficients in $\mathbb{F}$ to each $\X_t$ then yields a diagram of spaces $\mathrm{H}_*(\X_t;\mathbb{F})$: For every $s\leq t$, the inclusion map $\iota_s^t: \X_s \to \X_t$ induces a map $h_s^t= (\iota_s^t)_*: \mathrm{H}_*(\X_s;\mathbb{F}) \to \mathrm{H}_*(\X_t;\mathbb{F})$, and the collection of maps $\{h_s^t\}$ satisfy: 
\begin{enumerate}
    \item For all $r\leq s \leq t$, $h_s^t \circ h_r^s = h_r^t$;
    \item For all $t \in \R$, $h_t^t = id_{\mathrm{H}_*(\X_t;\mathbb{F})}$.
\end{enumerate}
This diagram of spaces is the $\emph{persistent homology}$ of $\X$, denoted by $\mathrm{PH}_*(\X_t;\mathbb{F})$. Any {$\R$-indexed} collection of vector spaces with maps satisfying \textbf{1.} and \textbf{2.} above is called a \emph{persistence module}. More succinctly put, a persistence module is a functor from the poset $(\R,\leq)$ to the category of vector spaces over some field. The vector spaces can be taken over any field $\mathbb{F}$, but we always work over a finite field.

\begin{example}\label{EXM:filtration}
    Let $\X$ be a topological space, and let $f: \X \to \R$ be a continuous function. Then the sublevel set filtration of $\X$ with respect to $f$ is given by $\X_t = \{\x \in \X \, | \, f(\x) \leq t\}$. Applying homology to this filtration yields a persistence module, which we denote by $\mathrm{PH}_*(\X,f)$. 
\end{example}

If $\mathrm{H}_*(\X_t)$ is finite dimensional for each $t \in \R$ (which is a mild requirement, and will always be satisfied in our setting), then $\mathrm{PH}_*(\X)$ is completely described by a set of intervals, denoted by $\Dgm(\mathrm{PH}_*(\X))$. The presence of an interval $[t_b, t_d) \in \Dgm(\mathrm{PH}_p(\X))$ tells us that a particular $p$-dimensional homology class is born at time $t_b$, and lives until time $t_d$, where it then dies. If $t_d = \infty$, this means that this homology class lives forever, and corresponds to a global homology class in $\mathrm{H}_p(\X)$. The diagram $\Dgm(\mathrm{PH}_*(\X))$ can be conveniently visualized, see Figure~\ref{FIG:persdia}.

\subsubsection{Stability}
An important property of a persistence module one needs to verify before using it in an application, is that it is stable with respect to the input data. Intuitively, this means that small perturbations of the input data should result only in small perturbations in the obtained persistence diagrams. We make this precise below.

\begin{definition}
    A matching between two multi-sets $A$ and $B$ is a bijection $\chi$ between two subsets $A' \subset A$ and $B' \subset B$. We denote this by $\chi: A \nrightarrow B$. If $\chi$ matches $a \in A$ to $b \in B$, we write $(a,b) \in \chi$. If $c \in A \cup B$ is unmatched by $\chi$, we abuse notation and write $c \notin \chi$.
\end{definition}

\begin{definition}
    Let $I = \langle t_{b_1}, t_{d_1} \rangle$ and $J = \langle t_{b_2}, t_{d_2} \rangle $ be two intervals in $\overline{\R} = \R \cup \{-\infty, \infty\}$. Then the cost of $I$ is given by 
    \begin{align*}
        c(I) := (t_{d_1}-t_{b_1})/2
    \end{align*}
    and the cost of the pair $(I,J)$ is given by:
    \begin{align*}
        c(I,J) := \max\{|t_{b_1} - t_{b_2}|,~|t_{d_1} - t_{d_2}|\}
    \end{align*}
    Now let  $\mathcal{D}_1$ and $\mathcal{D}_2$ be two multi-sets of intervals in $\overline{\R} = \R \cup \{-\infty, \infty\}$. The cost of a matching $\chi : \mathcal{D}_1 \nrightarrow \mathcal{D}_2$ is defined as 
    \begin{align*}
        \mathrm{cost}(\chi) := \max\Bigl\{\sup_{(I,J) \in \chi} c(I,J),~\sup_{ I \notin \chi} c(I) \Bigr\}
    \end{align*}
    Finally, the \emph{Bottleneck distance} between $\mathcal{D}_1$ and $\mathcal{D}_2$ is given by:
    \begin{align*}
        d_B(\mathcal{D}_1,\mathcal{D}_2):= \inf_{\chi: \mathcal{D}_1 \nrightarrow \mathcal{D}_2} \mathrm{cost}(\chi)
    \end{align*}
\end{definition}
The Bottleneck distance is the most widely used distance on the space of persistence diagrams, and it satisfies the following:

\begin{theorem}[\cite{originalstability}]\label{COR:contstab} 
    Suppose $\X$ is a CW-complex, and $f,g: \X \to \R$ are two continuous functions on $\X$. Then
    \begin{align*}
        d_B(\Dgm(\mathrm{PH}_p(\X,f)), \Dgm(\mathrm{PH}_p(\X,g))) \leq \| f-g\|_\infty := \max_{\x \in \X} |f(\x) - g(\x)|.
    \end{align*}
\end{theorem}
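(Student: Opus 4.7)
The plan is to prove this via the standard interleaving argument, which reduces the continuous stability statement to the algebraic stability theorem for persistence modules.

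First I would set $\varepsilon := \|f - g\|_\infty$ and unpack what this says at the level of sublevel sets. For every $\x \in \X$, we have $f(\x) - \varepsilon \leq g(\x) \leq f(\x) + \varepsilon$, so writing $\X_t^f := \{\x : f(\x) \leq t\}$ and similarly for $g$, we obtain chains of inclusions
\begin{equation*}
    \X_t^f \subseteq \X_{t+\varepsilon}^g \subseteq \X_{t+2\varepsilon}^f \qquad \text{and} \qquad \X_t^g \subseteq \X_{t+\varepsilon}^f \subseteq \X_{t+2\varepsilon}^g
\end{equation*}
for every $t \in \R$, and all these inclusions commute with each other and with the internal inclusions $\X_s^f \hookrightarrow \X_t^f$ (and the $g$-analogue) for $s \leq t$.

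Next I would apply singular homology $\mathrm{H}_p(-;\mathbb{F})$ to the entire commutative diagram of inclusions. This produces maps $\varphi_t : \mathrm{H}_p(\X_t^f) \to \mathrm{H}_p(\X_{t+\varepsilon}^g)$ and $\psi_t : \mathrm{H}_p(\X_t^g) \to \mathrm{H}_p(\X_{t+\varepsilon}^f)$ which, by functoriality, commute with the structure maps of the two persistence modules $\mathrm{PH}_p(\X,f)$ and $\mathrm{PH}_p(\X,g)$, and satisfy $\psi_{t+\varepsilon} \circ \varphi_t = h_t^{t+2\varepsilon}$ and $\varphi_{t+\varepsilon} \circ \psi_t = h_t^{t+2\varepsilon}$ on each side. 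This is precisely the data of an \emph{$\varepsilon$-interleaving} between $\mathrm{PH}_p(\X,f)$ and $\mathrm{PH}_p(\X,g)$.

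Finally I would invoke the algebraic stability (isometry) theorem for persistence modules, which states that for any two (pointwise finite-dimensional) persistence modules admitting an $\varepsilon$-interleaving, the bottleneck distance between their interval decompositions is at most $\varepsilon$. Applied to the interleaving constructed above, this yields
\begin{equation*}
    d_B\bigl(\Dgm(\mathrm{PH}_p(\X,f)),\, \Dgm(\mathrm{PH}_p(\X,g))\bigr) \leq \varepsilon = \|f-g\|_\infty,
\end{equation*}
as desired. The CW-complex hypothesis enters only to guarantee that the sublevel sets have finitely generated homology in each degree, so that the persistence diagrams are well-defined and the algebraic stability theorem applies.

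The main obstacle is really the algebraic stability theorem, whose proof is nontrivial and which I would cite directly from the reference rather than reproving. The interleaving construction itself is essentially formal once one notices that pointwise closeness of $f$ and $g$ translates directly into $\varepsilon$-shifted containments of sublevel sets.
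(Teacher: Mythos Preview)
The paper does not supply its own proof of this theorem; it is quoted as a known result with a citation to \cite{originalstability}. Your interleaving argument is the standard modern proof and is correct. One small caveat: the bare hypothesis that $\X$ is a CW-complex and $f,g$ are continuous does not by itself force the sublevel sets to have finite-dimensional homology (think of an infinite complex, or a continuous function with wild sublevel sets); the original reference imposes a tameness condition on the functions, and the algebraic stability theorem you invoke needs pointwise finite-dimensionality (or $q$-tameness). So your closing remark about where the CW hypothesis enters is slightly off, but this is a technicality in the statement as the paper records it rather than a flaw in your argument.
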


\begin{definition}
    Let $\X$ and $\Y$ be two subsets of a metric space $(M,d)$. Write $U_\varepsilon(\X) := {\{ m \in M \, | \, \dist(m,\X) \leq \varepsilon\}}$. Then the Hausdorff distance between $\X$ and $\Y$ is given by:
    \begin{align*}
        d_H(\X,\Y) :&= \inf\{\varepsilon \geq 0 \, | \, \Y \subseteq U_\varepsilon(\X)  \text{ and } \X \subseteq U_\varepsilon(Y)\}\\
        &= \max \Bigl\{ \sup_{\x\in \X}\dist(\x,\Y),~ \sup_{\y\in \Y}\dist(\y,\X) \Bigr\}.
    \end{align*}
\end{definition}

\begin{example}
    Let $\X \subseteq M,$ be a subset of a metric space $(M,d)$. Then the distance function 
    \[
    d_\X: M \to \R, \quad \x \mapsto \dist(\x, \X)
    \]
    is a continuous function on $M$, and defines a sublevel set filtration and a persistence module, which we suggestively denote by $\mathrm{PH}_p(\check{C}(\X))$. Note that if $\Y$ is another subset of $M$, then $\|d_\X - d_\Y\|_\infty = d_H(\X,\Y)$, so it follows from Theorem \ref{COR:contstab} that  
    \begin{align*}
        d_B\left(\Dgm(\mathrm{PH}_p(\check{C}(\X))),~ \Dgm(\mathrm{PH}_p(\check{C}(\Y)))\right) \leq d_H(\X,\Y).
    \end{align*}
\end{example}
    In the above example, $\X$ is typically a finite subset of $\mathbb{R}^n$, and this is often used as one of the motivating examples for persistent homology. When $\X$ is sampled from some unknown shape $\mathfrak{X}$ inside of $\mathbb{R}^n$, its persistent homology can be used to estimate the homology of $\mathfrak{X}$. It can be computed using the \v{C}ech filtration of $\X$, which is a filtered simplicial complex whose homotopy type at each stage agrees with that of the sublevel set of $d_\X$ at the same scale. It is true, but not entirely straight-forward, that the two persistence modules arising from these constructions are isomorphic \cite{unifiednervepreprint, functnervesteve}.

\subsubsection{Wasserstein distance}
In Section~\ref{SEC:stability}, we will show stability results for our novel persistence module in terms of the \emph{Wasserstein distance}.
The Wasserstein distance is a metric on the space of probability measures supported on $\R^n$. It is commonly used in the context of optimal transport and (statistical) data analysis, see, e.g.,~\cite{OptimalTransport}. The primary advantage of the Wasserstein distance over the Hausdorff distance is that it is much less sensitive to outliers, and therefore more suited to applications in statistics. For our purposes, it is enough to consider probability measures with \emph{finite support}. 

\begin{definition}
Let $\mu_\samples, \mu_\sampless$ be two probability measures with finite supports $\samples, \sampless \subseteq~\R^n$, respectively. The Wasserstein distance $d_W(\mu_\samples, \mu_\sampless)$ is then given by the optimum solution to the linear program:
\begin{alignat}{2} \label{EQ:Wasserstein}
    d_W(\mu_\samples, \mu_\sampless) := \min_{\gamma}& \quad\quad&&\smashoperator[l]{\sum_{\x \in \samples, \y \in \sampless}} \gamma(\x, \y) \cdot \|\x - \y\|_2 \\
    \rm{s.t.}& &&\sum_{\y \in \sampless} \gamma(\x, \y) = \mu_{\samples}(\x) \\
    & &&\sum_{\x \in \samples} \gamma(\x, \y) = \mu_{\sampless}(\y) \\
    & &&\gamma : \samples \times \sampless \to \R_{\geq 0}.
\end{alignat}
\end{definition}
One can think of $d_W(\mu_\samples, \mu_\sampless)$ as the amount of `work' required to transform the measure $\mu_\samples$ into $\mu_\sampless$. For instance, if $\sampless = \{\y\}$ is a singleton, then $d_W(\mu_\samples, \mu_{\{\y\}})$ is simply given by:
\[
    d_W(\mu_\samples, \mu_{\{\y\}}) = \sum_{\x \in \samples} \mu_\samples(\x) \cdot \|\x - \y\|_2.
\]

\subsection{The Christoffel-Darboux kernel} 
\label{SEC:CD}
In this section, we introduce some basic facts on Christoffel-Darboux kernels, with emphasis on the statistical setting. We refer to the book of Lasserre, Pauwels and Putinar~\cite{LPP2022} for a comprehensive treatment.
Let $\mu$ be a finite, positive Borel measure on $\R^n$ with compact, full-dimensional support. (In our setting, it is helpful to think of $\mu$ as the restriction of the Lebesgue measure to a sufficiently nice compact subset of $\R^n$). Then $\mu$ induces an inner product on the space $\R[\x]$ of $n$-variate, real polynomials via:
\begin{equation} \label{EQ:innerprod}
    \langle p, q \rangle_{\mu} := \int p(\x) q(\x) d\mu(\x).
\end{equation}
We can choose an ortho\emph{normal} basis $\basis = \{ b_\alpha : \alpha \in \N^n \}$ for $\R[\x]$ with respect to $\langle \cdot, \cdot \rangle_{\mu}$, which we order so that $b_\alpha \in \R[\x]$ is of total degree $|\alpha| = \sum_{i=1}^n \alpha_i$ for each $\alpha \in \N^n$. That is, we have the orthogonality relations:
\begin{equation} \label{EQ:orthonorm}
     \langle b_\alpha, b_\beta \rangle_{\mu} = \int b_\alpha(\x) b_\beta(\x) d\mu(\x) = \delta_{\alpha\beta} \quad (\alpha, \beta \in \N^n).
\end{equation}
Using this orthonormal basis, we can define the Christoffel-Darboux kernel.
\begin{definition}
For $d \in \N$, the \emph{Christoffel-Darboux kernel} $\CDkernel{\mu}{d} : \R^n \times \R^n \to \R$ of degree $d$ for the measure $\mu$ is defined as:
\begin{equation}
    \CDkernel{\mu}{d}(\x, \y) := \sum_{|\alpha| \leq d} b_\alpha(\x) b_\alpha(\y).
\end{equation}
\end{definition}
The Christoffel-Darboux kernel $\CDkernel{\mu}{d}$ is also called the \emph{reproducing kernel} for the Hilbert space $(\R[\x]_d, \langle \cdot, \cdot \rangle_{\mu})$, as we have the reproducing property:
\begin{equation} \label{EQ:reproducing}
    \int \CDkernel{\mu}{d}(\x, \y) p(\y) d\mu(\y) = \langle \CDkernel{\mu}{d}(\x, \cdot), p(\cdot) \rangle_{\mu} = p(\x) \quad(p \in \R[\x]_d).
\end{equation}
% Indeed, after decomposing $p = \sum_{|\alpha| \leq d} \lambda_\alpha b_\alpha$ in the orthonormal basis, we find that:
% \[
%     \int \CDkernel{\mu}{d}(\x, \y) p(\y) d\mu(\y) = \sum_{|\alpha| \leq d} \sum_{|\beta| \leq d} \lambda_\alpha \int b_\beta(\x) b_\beta(\y) b_\alpha(\y) d\mu(\y) = \sum_{|\alpha| \leq d} \lambda_\alpha b_\alpha(\x).
% \]
We note that the kernel $\CDkernel{\mu}{d}$ is independent of our choice of basis $\{b_\alpha\}$, and it can be computed via the Gram-Schmidt procedure even if we do not have access to an explicit orthonormal basis.
\begin{proposition}[Gram-Schmidt, see Prop. 4.1.2 in~\cite{LPP2022}] \label{PROP:matrixinverse}
Let $d \in \N$ and let $\basis = \{ b_\alpha : |\alpha| \leq d\}$ be \emph{any} basis for $\R[\x]_d$. For $\x \in \R^n$, write ${\basis_d(\x) = (b_\alpha (\x))_{|\alpha| \leq d}} \in \R^{s(n, d)}$ and consider the matrix $M^\mu_d(\basis) \in \R^{s(n, d) \times s(n, d)}$ given by the entrywise integral:
\begin{equation} \label{EQ:moment_matrix}
\begin{split}
        M^\mu_d(\basis) &:= \int \basis_d(\x) \basis_d(\x)^\top d\mu(\x),
        \\
        \text{i.e., }\quad (M^\mu_d(\basis))_{\alpha, \beta} &= \int b_\alpha(\x) b_\beta(\x) d\mu(\x) \quad (\alpha, \beta \in \N^n_d).
\end{split}
\end{equation}
The matrix $M^\mu_d(\basis)$ is strictly positive semidefinite, i.e., its eigenvalues are all stricly larger than $0$. 
Moreover, we have:
\[
\CDkernel{\mu}{d}(\x, \y) = \basis_d(\x)^\top \big(M^\mu_d(\basis)\big)^{-1} \basis_d(\y).
\]
\end{proposition}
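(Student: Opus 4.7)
The plan is to prove the two claims separately: first the strict positive definiteness of $M^\mu_d(\basis)$, and then the formula for $\CDkernel{\mu}{d}$ via a change-of-basis argument relating $\basis$ to the orthonormal basis from the definition.

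For strict positive definiteness, I would take an arbitrary vector $c = (c_\alpha)_{|\alpha| \leq d} \in \R^{s(n, d)}$ and consider the associated polynomial $p = \sum_\alpha c_\alpha b_\alpha \in \R[\x]_d$. Using bilinearity of the integral, one directly computes
\[
c^\top M^\mu_d(\basis) c = \int \Bigl( \sum_\alpha c_\alpha b_\alpha(\x) \Bigr)^2 d\mu(\x) = \int p(\x)^2 d\mu(\x) = \langle p, p \rangle_\mu.
\]
Since $\basis$ is a basis, $p \neq 0$ whenever $c \neq 0$, and the full-dimensional support assumption on $\mu$ implies that $p^2$ is strictly positive on a set of positive $\mu$-measure, so the integral is strictly positive. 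Hence all eigenvalues of $M^\mu_d(\basis)$ are strictly positive.

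For the kernel formula, the key observation is how the moment matrix transforms under change of basis. Let $\basis' = (b'_\alpha)_{|\alpha| \leq d}$ be an orthonormal basis as in the definition of $\CDkernel{\mu}{d}$, and let $A \in \R^{s(n, d) \times s(n, d)}$ be the invertible matrix such that $\basis_d(\x) = A \basis'_d(\x)$ for all $\x$. By the orthonormality relation~\eqref{EQ:orthonorm} the moment matrix of $\basis'$ is the identity, and therefore
\[
M^\mu_d(\basis) = \int A \basis'_d(\x) \basis'_d(\x)^\top A^\top d\mu(\x) = A \Bigl( \int \basis'_d(\x) \basis'_d(\x)^\top d\mu(\x) \Bigr) A^\top = A A^\top.
\]
Inverting and substituting $\basis'_d(\x) = A^{-1} \basis_d(\x)$ into the definition of $\CDkernel{\mu}{d}$ then yields
\[
\CDkernel{\mu}{d}(\x, \y) = \basis'_d(\x)^\top \basis'_d(\y) = \basis_d(\x)^\top A^{-\top} A^{-1} \basis_d(\y) = \basis_d(\x)^\top \bigl( M^\mu_d(\basis) \bigr)^{-1} \basis_d(\y),
\]
as desired.

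There is no real obstacle here; the argument is pure linear algebra once one observes the identity $M^\mu_d(\basis) = AA^\top$. The only point requiring some care is keeping the direction of the change-of-basis matrix consistent (i.e.\ whether $A$ expresses $\basis$ in terms of $\basis'$ or vice versa), and noting that the claim implicitly shows $\CDkernel{\mu}{d}$ is independent of the choice of orthonormal basis used in its definition, since the right-hand side of the final formula depends only on the measure $\mu$ and the arbitrary basis $\basis$.
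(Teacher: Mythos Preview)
Your proof is correct. Note, however, that the paper does not actually supply its own proof of this proposition: it is stated with the attribution ``Gram-Schmidt, see Prop.~4.1.2 in~\cite{LPP2022}'' and left unproven in the text. Your argument is the standard one and is essentially what the name ``Gram-Schmidt'' points to --- the orthonormal basis $\basis'$ can be obtained from $\basis$ by Gram--Schmidt, and the change-of-basis identity $M^\mu_d(\basis) = AA^\top$ is exactly the content of that procedure --- so there is nothing to contrast.
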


\subsection*{The Christoffel polynomial}
For our purposes, we are mostly interested in the \emph{Christoffel polynomial} 
$\CDpol{\mu}{d} : \R^n \to \R$, defined in terms of an orthonormal basis~$\basis$ for $(\R[\x]_d, \langle \cdot, \cdot \rangle_{\mu})$ as:
\begin{equation} \label{EQ:christpol}
    \CDpol{\mu}{d}(\x) := \CDkernel{\mu}{d}(\x, \x) = \sum_{|\alpha| \leq d} b_\alpha(\x)^2.
\end{equation}
The Christoffel polynomial is a \emph{sum of squares} of polynomials, implying immediately that $\CDpol{\mu}{d}(\x) \geq 0$ for all $\x \in \R^n$. In fact, by definiteness of the inner product~\eqref{EQ:innerprod}, it is strictly positive on $\R^n$. It has a remarkable alternative definition in terms of a \emph{variational problem}:
\[
    \frac{1}{\CDpol{\mu}{d}(\z)} = \min_{p \in \R[\x]_d} \left\{ \int p^2(\x) d\mu(\x) : p(\z) = 1 \right\} \quad (\z \in \R^n).
\]
The Christoffel polynomial encodes information on the support of $\mu$. Roughly speaking, $\CDpol{\mu}{d}(\x)$ is rather \emph{large} when $\x \not\in \mathrm{supp}(\mu)$, and rather \emph{small} when ${\x \in \mathrm{supp}(\mu)}$ (see Figure~\ref{FIG:CDlevelsetexamples}). This can be made precise in the regime~$d \to \infty$. 
\begin{proposition}[see Sec.~4.3 of~\cite{LPP2022}] \label{PROP:christfunc}
Under certain assumptions on the measure $\mu$, we have:
\[
    \lim_{d \to \infty} \CDpol{\mu}{d}(\x) = \begin{cases} 
        O(d^n) &\x \in \mathrm{int}\big(\mathrm{supp}(\mu)\big), \\
        \Omega(\exp(\alpha d)) & \x \not\in \mathrm{supp}(\mu),
    \end{cases} 
\]
for any fixed $\x \in \R^n$. Here, the constant $\alpha$ is proportional to $\dist(\x, \mathrm{supp}(\mu))$. 
\end{proposition}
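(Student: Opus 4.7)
The central tool is the variational characterization $1/\CDpol{\mu}{d}(\z) = \min\{\int p^2 \, d\mu : p \in \R[\x]_d,\ p(\z) = 1\}$ stated just before the proposition: bounds on $\CDpol{\mu}{d}(\x)$ reduce to exhibiting (or ruling out) polynomial witnesses of small $L^2(\mu)$-norm that take the value $1$ at $\x$. The two regimes of the proposition correspond to the two complementary techniques used to produce or obstruct such witnesses.

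\emph{Exponential lower bound outside the support.} Suppose $\x \notin \mathrm{supp}(\mu)$ and set $\delta := \dist(\x, \mathrm{supp}(\mu)) > 0$. Since $\mathrm{supp}(\mu)$ is compact, I would first choose an affine function $\ell : \R^n \to \R$ mapping $\mathrm{supp}(\mu)$ into $[-1, 1]$ and sending $\x$ to a value $y_0 > 1$ with $y_0 - 1$ comparable to $\delta$. Then the polynomial
\begin{equation*}
    p(\y) := \frac{T_d(\ell(\y))}{T_d(y_0)},
\end{equation*}
where $T_d$ is the degree-$d$ Chebyshev polynomial of the first kind, satisfies $p(\x) = 1$ and $|p(\y)| \leq 1/T_d(y_0)$ for all $\y \in \mathrm{supp}(\mu)$. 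The classical estimate $T_d(y_0) \geq \tfrac{1}{2}\bigl(y_0 + \sqrt{y_0^2 - 1}\bigr)^d$ then yields $\int p^2 \, d\mu \leq C e^{-2\alpha d}$ for some $\alpha > 0$ whose magnitude is controlled by $\delta$, and the variational principle delivers the desired exponential lower bound on $\CDpol{\mu}{d}(\x)$.

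\emph{Polynomial upper bound in the interior.} Suppose $\x \in \mathrm{int}(\mathrm{supp}(\mu))$ and pick a ball $B \subseteq \mathrm{supp}(\mu)$ centered at $\x$ on which $\mu$ dominates a fixed multiple of Lebesgue measure. A multivariate Nikolskii--Bernstein--Markov inequality then furnishes a bound of the form
\begin{equation*}
    |p(\x)|^2 \leq C \, d^n \int_B p(\y)^2 \, d\mu(\y) \quad (p \in \R[\x]_d),
\end{equation*}
with $C$ independent of $d$. Applied to any $p$ with $p(\x) = 1$ this forces $\int p^2 \, d\mu \geq c / d^n$, and hence $\CDpol{\mu}{d}(\x) = O(d^n)$ by the variational principle.

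\emph{Main obstacle.} The delicate half will be the interior upper bound: it hinges on a Nikolskii-type polynomial inequality comparing pointwise values to $L^2(\mu)$-norms, and pinning down the right ``certain assumptions'' on $\mu$ is precisely what makes it nontrivial. The standard derivation proceeds by a local change of coordinates that reduces the estimate to a tensorized univariate Chebyshev bound, which is where hypotheses such as ``$\mu$ has a density bounded below on a neighborhood of $\x$'' or regularity of $\partial \mathrm{supp}(\mu)$ must enter. For more irregular measures, the exponent $n$ would need to be replaced by the local dimension of $\mathrm{supp}(\mu)$ at $\x$, and only a weaker version of the stated asymptotics can be expected.
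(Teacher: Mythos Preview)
The paper does not supply a proof of this proposition at all: it is quoted verbatim from Section~4.3 of~\cite{LPP2022} and used only as motivation. So there is no ``paper's own proof'' to compare against; what one can do is compare your sketch to the argument in the cited monograph, which your plan follows in spirit (variational characterization, Chebyshev-type needle for the exterior, Nikolskii/Bernstein--Markov comparison for the interior).

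That said, your exterior argument has a genuine gap as written. You propose to pick an \emph{affine} $\ell:\R^n\to\R$ with $\ell(\mathrm{supp}(\mu))\subseteq[-1,1]$ and $\ell(\x)=y_0>1$. Such an $\ell$ exists only when $\x$ can be separated from $\mathrm{supp}(\mu)$ by a hyperplane, i.e.\ when $\x\notin\mathrm{conv}(\mathrm{supp}(\mu))$. If $\mathrm{supp}(\mu)$ is, say, an annulus and $\x$ is its centre, no affine $\ell$ works, yet $\dist(\x,\mathrm{supp}(\mu))>0$ and the exponential bound must still hold. The standard remedy---and the one used in~\cite{LPP2022}---is to build the needle polynomial radially: with $\delta=\dist(\x,\mathrm{supp}(\mu))$ and $D=\sup_{\y\in\mathrm{supp}(\mu)}\|\y-\x\|$, one has $\|\y-\x\|^2\in[\delta^2,D^2]$ for all $\y\in\mathrm{supp}(\mu)$, so an affine map sends $[\delta^2,D^2]$ onto $[-1,1]$ and $0$ to some $y_0>1$, and one takes $p(\y)=T_k\bigl(a\|\y-\x\|^2+b\bigr)/T_k(y_0)$. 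This is degree $2k$ rather than $k$, which costs only a factor of $2$ in the exponent $\alpha$ and preserves the stated proportionality to $\delta$. With this correction your exterior half goes through; your interior half (dominate $\mu$ below by Lebesgue on a ball, invoke a Nikolskii inequality) is the right idea and is exactly where the unspecified ``certain assumptions'' enter.
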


When $\mu$ is the restriction of the Lebesgue measure to a sufficiently nice compact subset $\mathfrak{X} \subseteq \R^n$, a stronger result is shown by Lasserre and Pauwels~\cite{Lasserre2019}.
\begin{theorem}[reformulation of Thm.~7.3.2 in \cite{LPP2022}]
\label{THM:CDconvergence}
Let $\mathfrak{X} \subseteq \R^n$ be a compact set satisfying the conditions of Assumption~7.3.1 in \cite{LPP2022}, and let $\mu$ be the restriction of the Lebesgue measure to $\mathfrak{X}$. Then there exist sequences $(t_k)_{k \in \N}$ and $(d_k)_{k \in \N}$ such that the sublevel sets
$\X_k := \{ \x \in \R^n : \CDpol{\mu}{d_k}(\x) \leq t_k \}
$ satisfy:
% \]
% satisfy:
% \begin{align*}
\[
        \lim_{k \to \infty} d_{H}(\X_k, \mathfrak{X}) = 0, \quad \text{and} \quad
        \lim_{k \to \infty} d_{H}(\partial \X_k, \partial \mathfrak{X}) = 0.
\]
% \end{align*}
Here, $\partial \mathfrak{X}$ and  $\partial \X_k$ denote the boundary of $\mathfrak{X}$ and $\X_k$, respectively.
\end{theorem}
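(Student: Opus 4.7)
The statement is presented as a reformulation of Theorem~7.3.2 in \cite{LPP2022}, so the plan is to recover it as a direct consequence of that result together with Proposition~\ref{PROP:christfunc}. The key data driving the proof are the two disparate asymptotic rates for $\CDpol{\mu}{d}(\x)$: polynomial growth of order $O(d^n)$ for $\x \in \mathrm{int}(\mathfrak{X})$ versus exponential growth $\Omega(\exp(\alpha(\x) d))$ for $\x \notin \mathfrak{X}$, with $\alpha(\x)$ proportional to $\dist(\x, \mathfrak{X})$. The separation between these rates is what makes it possible to pick thresholds that simultaneously include everything inside and exclude everything far outside.

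Concretely, I would first pick $d_k \to \infty$ together with a threshold sequence $t_k$ growing at an \emph{intermediate} rate, e.g.\ of the form $t_k = d_k^{n+1}$. With this choice:
\begin{itemize}
\item For any $\x \in \mathrm{int}(\mathfrak{X})$, Proposition~\ref{PROP:christfunc} gives $\CDpol{\mu}{d_k}(\x) \leq C \, d_k^n \ll t_k$ for large $k$, so $\x \in \X_k$ eventually.
\item For any $\x$ with $\dist(\x, \mathfrak{X}) \geq \varepsilon$, the same proposition gives $\CDpol{\mu}{d_k}(\x) \geq \exp(c \varepsilon d_k) \gg t_k$ for large $k$, so $\x \notin \X_k$ eventually.
\end{itemize}
The first point, combined with density of $\mathrm{int}(\mathfrak{X})$ in $\mathfrak{X}$ under Assumption~7.3.1 of \cite{LPP2022}, gives the inclusion $\mathfrak{X} \subseteq U_\varepsilon(\X_k)$ for $k$ large; the second point gives $\X_k \subseteq U_\varepsilon(\mathfrak{X})$ for $k$ large. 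Together these yield the first Hausdorff limit.

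The second Hausdorff limit, $d_H(\partial \X_k, \partial \mathfrak{X}) \to 0$, does not follow formally from $d_H(\X_k, \mathfrak{X}) \to 0$ in general, and this is where I expect the real work of the proof to sit. One needs to exclude two pathologies: spurious interior boundary components of $\X_k$ (small ``holes'' inside $\mathfrak{X}$ where $\CDpol{\mu}{d_k}$ spikes above $t_k$), and boundary points of $\X_k$ that drift tangentially away from $\partial \mathfrak{X}$. Excluding the first requires a \emph{uniform} version of the polynomial upper bound on compact subsets of $\mathrm{int}(\mathfrak{X})$, so that for each $\varepsilon > 0$ one has $\CDpol{\mu}{d_k}(\x) \leq t_k$ on a $(-\varepsilon)$-neighborhood of $\mathfrak{X}$ eventually, which forces $\partial \X_k \subseteq U_\varepsilon(\partial \mathfrak{X})$. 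Excluding the second requires showing that every point of $\partial \mathfrak{X}$ is approached by boundary points of $\X_k$, which one gets from a continuity/intermediate value argument: near a boundary point $\x_0 \in \partial \mathfrak{X}$ one can find points both in $\mathrm{int}(\mathfrak{X})$ (where $\CDpol{\mu}{d_k} \leq t_k$) and outside $\mathfrak{X}$ (where $\CDpol{\mu}{d_k} > t_k$) arbitrarily close to $\x_0$, and continuity of $\CDpol{\mu}{d_k}$ forces a level-set crossing nearby.

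The principal obstacle, therefore, is promoting the pointwise asymptotics of Proposition~\ref{PROP:christfunc} to estimates that are uniform on the relevant compact regions (a closed inner approximation of $\mathfrak{X}$, and a closed $\varepsilon$-shell around $\partial \mathfrak{X}$); this is precisely the content handled under Assumption~7.3.1 in \cite{LPP2022}, which imposes regularity of $\partial \mathfrak{X}$ (such as a cone or interior-ball condition) sufficient to synchronize the pointwise bounds. Once these uniform versions are in hand, the choice of $(d_k, t_k)$ above gives both limits simultaneously, and the theorem follows by appealing to the cited result in \cite{LPP2022}.
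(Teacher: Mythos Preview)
The paper does not actually prove this theorem: it is stated in the background section as a reformulation of Theorem~7.3.2 in \cite{LPP2022} and is simply cited, with no argument given. So there is no ``paper's own proof'' to compare against.

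Your sketch is a reasonable heuristic for why the result should hold, and you correctly identify the mechanism (polynomial growth inside versus exponential growth outside, separated by an intermediate threshold like $t_k = d_k^{n+1}$). You are also right that the boundary convergence $d_H(\partial \X_k, \partial\mathfrak{X}) \to 0$ is the subtler half and requires uniform versions of the pointwise asymptotics. However, your proposal is not a self-contained proof: at the crucial step you explicitly defer the uniform estimates to Assumption~7.3.1 and the machinery of \cite{LPP2022}. In that sense your proposal and the paper's treatment coincide --- both amount to invoking the cited result rather than reproving it. If you intend this as an independent proof, the gap is precisely the promotion of Proposition~\ref{PROP:christfunc} from pointwise to uniform bounds on inner approximations of $\mathfrak{X}$ and on $\varepsilon$-shells around $\partial\mathfrak{X}$; this is nontrivial and is what the regularity hypotheses in \cite{LPP2022} are designed to deliver.
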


\subsubsection{The empirical setting}
\label{SEC:empirical}
Assume now that we do not have explicit knowledge of the measure $\mu$, but are instead given a sequence $\samples \subseteq \R^n$ of $N$ samples $X_{1}, X_2, \dots, X_{N} \in \R^n$, drawn independently according to~$\mu$.
These samples induce a probability measure $\mu_{\samples}$ given by $\mu_{\samples} = \frac{1}{N}\sum_{i=1}^N \delta_{X_i}$, which we call the emperical measure associated to $\samples$. Under a non-degeneracy assumption (Assumption~\ref{ASSU:nondegen} below), the measure~$\mu_{\samples}$ induces an inner product of the form~\eqref{EQ:innerprod} on $\R[\x]_d$ by: 
\begin{equation} \label{EQ:innerprodemp}
    \langle p, q \rangle_{\mu_{\samples}} := \int p(\x) q(\x) d\mu_{\samples}(\x) = \frac{1}{N} \sum_{i=1}^N p(X_i) q(X_i).
\end{equation}
In light of~\eqref{EQ:innerprodemp} and Proposition~\ref{PROP:matrixinverse}, it is straightforward to compute the Christoffel-Darboux kernel~$\CDkernel{\mu_{\samples}}{d}$ of degree $d$ for the measure $\mu_{\samples}$ (and thus to compute $\CDpol{\mu_{\samples}}{d}$). Indeed, the entries of the matrix $M^{\mu_{\samples}}_d(\basis)$ in~\eqref{EQ:moment_matrix} may each be computed in time $O(N)$, after which $M^{\mu_{\samples}}_d(\basis)$ can be inverted in time $O(s(n, d)^3)$.
\begin{proposition} \label{PROP:CDcomplexity}
The empirical Christoffel-Darboux kernel $\CDkernel{\mu_{\samples}}{d}$ of degree $d$ for $N$ samples in $\R^n$ may be computed in time $O(Ns(n, d)^2 + s(n, d)^3)$.
\end{proposition}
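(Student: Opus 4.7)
The plan is to unpack Proposition~\ref{PROP:matrixinverse} in the empirical case and account for the arithmetic operations. Fix any basis $\basis = (b_\alpha)_{|\alpha| \leq d}$ for $\R[\x]_d$ (for concreteness, the monomial basis, whose evaluations at a point are straightforward). By Proposition~\ref{PROP:matrixinverse} applied to $\mu_{\samples}$, the kernel admits the representation
\[
\CDkernel{\mu_{\samples}}{d}(\x, \y) = \basis_d(\x)^\top \bigl( M^{\mu_{\samples}}_d(\basis) \bigr)^{-1} \basis_d(\y),
\]
so it suffices to produce the inverse moment matrix $A := (M^{\mu_{\samples}}_d(\basis))^{-1} \in \R^{s(n,d) \times s(n,d)}$; this matrix is the output we count the cost of computing.

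For the first step, I would build the moment matrix. Using~\eqref{EQ:innerprodemp}, its entries read $M^{\mu_{\samples}}_d(\basis)_{\alpha,\beta} = \tfrac{1}{N}\sum_{i=1}^N b_\alpha(X_i) b_\beta(X_i)$. I would precompute the $N \times s(n,d)$ evaluation matrix $B$ with $B_{i,\alpha} = b_\alpha(X_i)$, which takes $O(N s(n,d))$ time, and then observe that
\[
M^{\mu_{\samples}}_d(\basis) = \tfrac{1}{N} B^\top B.
\]
Multiplying an $s(n,d) \times N$ by an $N \times s(n,d)$ matrix uses $O(N s(n,d)^2)$ arithmetic operations, which dominates the precomputation.

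For the second step, Proposition~\ref{PROP:matrixinverse} guarantees that $M^{\mu_{\samples}}_d(\basis)$ is strictly positive definite (under the non-degeneracy assumption mentioned in Section~\ref{SEC:empirical}), so it is invertible. Standard Gaussian elimination or Cholesky factorization produces $A$ in $O(s(n,d)^3)$ time.

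Summing the two contributions gives the stated bound $O(N s(n,d)^2 + s(n,d)^3)$. There is no real obstacle here: the proposition is essentially a bookkeeping corollary of Proposition~\ref{PROP:matrixinverse}, with the only conceptual point being that the $N$-dependence is forced into the single moment-matrix construction step, while the remaining linear algebra is $N$-free.
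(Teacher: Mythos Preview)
Your argument is correct and matches the paper's own justification, which appears in the text immediately preceding the proposition: compute the $s(n,d)^2$ entries of the empirical moment matrix in $O(N)$ time each, then invert in $O(s(n,d)^3)$. Your $B^\top B$ formulation is a slightly more explicit way of organizing the same computation, but the decomposition and the resulting bound are identical.
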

The above procedure only works when the matrix $M^{\mu_{\samples}}_d(\basis)$ is invertible, or equivalently, when the `inner product' $\langle \cdot, \cdot \rangle_{\mu_{\samples}}$ on $\R[\x]_d$ is definite. We shall make this assumption throughout. 
\begin{assumption} \label{ASSU:nondegen}
We say a sample set $\samples$ is non-degenerate (up to degree $d$) if the inner product $\langle \cdot, \cdot \rangle_{\mu_{\samples}}$ associated to the empirical measure $\mu_{\samples}$ induced by $\samples$ via~\eqref{EQ:innerprodemp} is definite for polynomials up to degree $d$.
% Equivalently, this means that the matrix~$M^{\mu_{\samples}}_d$ is non-singular (for any choice of basis).
\end{assumption}
Assumption~\ref{ASSU:nondegen} is satisfied if and only if the samples $\samples$ are not contained in an algebraic hypersurface of degree $d$ (i.e., the zero set of a polynomial of degree at most $d$). This implies in particular that $N \geq s(n, d) + 1$.

Under certain assumptions on $\mu$, Lasserre and Pauwels~\cite{Lasserre2019} show that the (emperical) Christoffel polynomial $\CDpol{\mu_{\samples}}{d}$ converges to the (population) Christoffel polynomial $\CDpol{\mu}{d}$ as the number of samples $N \to \infty$. The rate of this convergence can be quantified, see~\cite{MaiTrang2022}.

\begin{theorem}[\cite{Lasserre2019}, see also \cite{LPP2022}]
\label{THM:lawlargenumbers}
Let $\samples = (X_1, X_2, \dots, X_N)$ be sampled from~$\mu$ as in the above. Then for each $\x \in [-1, 1]^n$, we have
$
    \lim_{N \to \infty} | \CDpol{\mu}{d}(\x) - \CDpol{\mu_\samples}{d}(\x)| = 0 \text{ almost surely}.
$
\end{theorem}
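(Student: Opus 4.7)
The plan is to reduce the claim to the strong law of large numbers applied entrywise to the moment matrix, and then to exploit continuity of matrix inversion at an invertible point. Fix any basis $\basis = \{b_\alpha : |\alpha| \leq d\}$ of $\R[\x]_d$ (say, the monomial basis). By Proposition~\ref{PROP:matrixinverse}, we can express both Christoffel polynomials in the closed form
\begin{equation*}
\CDpol{\mu}{d}(\x) = \basis_d(\x)^\top \bigl(M^{\mu}_d(\basis)\bigr)^{-1} \basis_d(\x), \qquad \CDpol{\mu_\samples}{d}(\x) = \basis_d(\x)^\top \bigl(M^{\mu_\samples}_d(\basis)\bigr)^{-1} \basis_d(\x),
\end{equation*}
so the whole $\x$-dependence sits in the fixed vector $\basis_d(\x)$, and it suffices to show that $(M^{\mu_\samples}_d(\basis))^{-1}$ converges almost surely to $(M^{\mu}_d(\basis))^{-1}$ as $N \to \infty$.

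First I would establish entrywise almost-sure convergence $M^{\mu_\samples}_d(\basis) \to M^{\mu}_d(\basis)$. For each pair $(\alpha, \beta)$ with $|\alpha|, |\beta| \leq d$, the entry of the empirical moment matrix is
\begin{equation*}
\bigl(M^{\mu_\samples}_d(\basis)\bigr)_{\alpha, \beta} = \frac{1}{N}\sum_{i=1}^N b_\alpha(X_i) b_\beta(X_i),
\end{equation*}
i.e.\ the empirical mean of the real random variable $b_\alpha(X) b_\beta(X)$ where $X \sim \mu$. Since $\mathrm{supp}(\mu)$ is compact and the polynomials $b_\alpha, b_\beta$ are continuous, these random variables are uniformly bounded and in particular integrable, with expectation equal to $(M^{\mu}_d(\basis))_{\alpha, \beta}$. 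The strong law of large numbers then yields almost-sure convergence for each fixed $(\alpha, \beta)$, and a union bound over the finitely many index pairs gives simultaneous almost-sure convergence of all entries.

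Next I would pass from the matrices to their inverses. By Proposition~\ref{PROP:matrixinverse}, the limit matrix $M^{\mu}_d(\basis)$ is strictly positive definite, hence in particular invertible. Matrix inversion is continuous on the open set of invertible matrices, so on the almost-sure event of entrywise convergence, $M^{\mu_\samples}_d(\basis)$ is invertible for all sufficiently large $N$ (so Assumption~\ref{ASSU:nondegen} eventually holds), and $(M^{\mu_\samples}_d(\basis))^{-1} \to (M^{\mu}_d(\basis))^{-1}$ entrywise. Sandwiching by the fixed vector $\basis_d(\x)$ on both sides preserves the convergence and delivers $\CDpol{\mu_\samples}{d}(\x) \to \CDpol{\mu}{d}(\x)$ almost surely.

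The only mildly delicate point is the handoff between convergence of matrices and convergence of their inverses, since on finitely many (null) realizations the empirical moment matrix may fail to be invertible — but this is exactly absorbed by the open-set / continuity argument, so the only real work is verifying the hypotheses of the strong law of large numbers, which is immediate from compactness of $\mathrm{supp}(\mu)$.
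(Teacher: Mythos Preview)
Your argument is correct and is essentially the standard proof of this result. Note, however, that the paper does not actually give its own proof of Theorem~\ref{THM:lawlargenumbers}: the statement is quoted from~\cite{Lasserre2019} (see also~\cite{LPP2022}) and no argument appears in the text or in the appendix. The proof in those references proceeds exactly along the lines you sketch --- entrywise convergence of the empirical moment matrix by the strong law of large numbers, followed by continuity of matrix inversion at the (positive definite) population moment matrix --- so your proposal matches the intended argument.
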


\section{A persistence module based on the Christoffel polynomial}
Theorems~\ref{THM:CDconvergence} and~\ref{THM:lawlargenumbers} motivate the use of the Christoffel polynomial in (statistical) data analysis. They show that certain sublevel sets of the empirical Christoffel polynomial approximate the support of the underlying population measure $\mu$ well (in Hausdorff distance) as the number of samples grows. However, Theorem~\ref{THM:CDconvergence} gives very little explicit information on \emph{which} (sub)level set to consider. This is the primary motivation for considering a persistent scheme instead, which we introduce now.

\begin{definition}
Fix $d \in \N$, and let $\samples \subseteq [-1, 1]^n$ be a set of samples whose associated empirical measure $\mu_\samples$ satisfies Assumption~\ref{ASSU:nondegen}. Let $\CDpol{\mu_{\samples}}{d} : \R^n \to \R$ be the corresponding Christoffel polynomial~\eqref{EQ:christpol}. For $t \geq 0$, we consider the compact sublevel set
\begin{equation} \label{EQ:sublevel}
       \X_{t} := \{ \x \in [-1,1]^n : \log \CDpol{\mu_{\samples}}{d}(\x) \leq t\}  = \{ \x \in [-1,1]^n : \CDpol{\mu_{\samples}}{d}(\x) \leq 10^t\},
\end{equation}
which is well-defined as $\CDpol{\mu_{\samples}}{d}(\x) \geq 1$ for all $\x \in \R^n$.
By definition $(\X_t)_{t \geq 0}$ is a filtration, i.e, $\X_{t} \subseteq \X_{t'}$ for any $t \leq t'$. In light of Example~\ref{EXM:filtration}, we can therefore define the persistence module
\begin{equation}
    \persmod{d}{\samples} := \mathrm{PH}_*([-1, 1]^n, ~\log \CDpol{\mu_\samples}{d}).
\end{equation}
\end{definition}
Notably, we do not consider the level sets of $\CDpol{\mu_{\samples}}{d}$, but rather those of $\log \CDpol{\mu_{\samples}}{d}$. Before we motivate this choice, let us first observe that from a computational perspective, this logarithmic rescaling makes no difference. Indeed, 
one can obtain the persistence module of the filtration $([-1,1]^n, \log \CDpol{\mu_{\samples}}{d})$ by first computing the module associated to $([-1,1]^n, \CDpol{\mu_{\samples}}{d})$ and then rescaling all interval modules. We choose to work with $\log \CDpol{\mu_\samples}{d}$ for two reasons. First, as we will see below, this choice allows us to prove a stronger and more elegant stability result for the module $\persmod{d}{\samples}$. Second, the logarithmic scaling produces persistence diagrams that better fit the underlying topology in practice. Proposition~\ref{PROP:christfunc} provides a rather convincing theoretical argument for this observation. Indeed, if $\x \in [-1, 1]^n$ is a point outside of the support of the underlying measure $\mu$, it tells us that 
    $
        \CDpol{\mu}{d}(\x) \approx \exp \big( \dist (\x, \mathrm{supp}(\mu)) \cdot d\big),
    $
    which is to say that $\log \CDpol{\mu}{d}(\x)$ scales \emph{linearly} in the distance $\dist(\x, \mathrm{supp}(\mu))$, an intuitively desirable property.
    
\subsection{Stability and robustness}
\label{SEC:stability}

In this section, we show that the module $\persmod{d}{\samples}$ is \emph{locally} stable under small perturbations of the sample set $\samples$, measured in the Wasserstein distance~\eqref{EQ:Wasserstein}. Namely, we show in Proposition~\ref{PROP:localLip}  that 
\[ d_B\big(\mathrm{Dgm}(\persmod{d}{\samples}),~\mathrm{Dgm}(\persmod{d}{\sampless})\big) \leq  \log \big (C_{\samples} \cdot d_W(\mu_\samples, \mu_\sampless) + 1 \big )
\]
for fixed $\samples \subseteq [-1, 1]^n$ and any $\sampless \subseteq [-1, 1]^n$ for which $d_W(\mu_\samples, \mu_\sampless)$ is sufficiently small. Here, the constant $C_\samples$ depends on $n, d$, and
the supremum norm
\[
    \| \log \CDpol{\mu_{\samples}}{d} \|_\infty := \max_{\x \in [-1, 1]^n} |\log \CDpol{\mu_{\samples}}{d}(\x)|,
\]
which we interpret as a `measure of algebraic degeneracy' of the set $\samples$ (see Assumption~\ref{ASSU:nondegen}).
For \emph{arbitrary} sets $\samples$ and $\sampless$, we show in Proposition~\ref{PROP:logstab} that 
\begin{equation} \label{EQ:dBXY}
d_B\big(\mathrm{Dgm}(\persmod{d}{\samples}),~\mathrm{Dgm}(\persmod{d}{\sampless})\big) \leq \log \big (C_{\samples, \sampless} \cdot d_W(\mu_\samples, \mu_\sampless) + 1 \big ),
\end{equation}
where the constant $C_{\samples, \sampless}$ now additionally depends on $\| \log \CDpol{\mu_{\sampless}}{d} \|_\infty$. If one restricts to `sufficiently non-degenerate' sample sets (i.e., those having $\|\log \CDpol{\mu_{\samples}}{d} \|_\infty$ bounded from above), relation~\eqref{EQ:dBXY} may be read as a \emph{global} stability result.

For the proof of these statements, note first that in light of Corollary~\ref{COR:contstab}, we have:
\begin{equation} \label{EQ:supnorm}
d_B\big(\mathrm{Dgm}(\persmod{d}{\samples}),~\mathrm{Dgm}(\persmod{d}{\sampless})\big) \leq 
   \|\log \CDpol{\mu_{\samples}}{d} - \log \CDpol{\mu_{\sampless}}{d}\|_\infty 
%   := \max_{\x \in [-1, 1]^n} |\CDpol{\mu_{\samples}}{d}(\x) - \CDpol{\mu_{\sampless}}{d}(\x)|
\end{equation}
and so it suffices to consider the quantity $\|\log \CDpol{\mu_{\samples}}{d} - \log \CDpol{\mu_{\sampless}}{d}\|_\infty$. We start by showing the following.

\begin{theorem} \label{THM:wasstab}
Let $\samples, \sampless \subseteq \R^n$ be as in the above. Write $C_{n, d} := 4 \cdot s(n, d) \cdot d^2$, where $s(n,d) := \dim \R[\x]_d = {n + d \choose d}$. Then for all $\x \in \R^n$, we have that:
\[
    |\CDpol{\mu_{\samples}}{d}(\x) - \CDpol{\mu_{\sampless}}{d}(\x)| \leq C_{n, d} \cdot \|\CDpol{\mu_\samples}{d}\|_{\infty} \cdot d_W(\mu_\samples, \mu_{\sampless}) \cdot \CDpol{\mu_\sampless}{d}(\x).
\]
\end{theorem}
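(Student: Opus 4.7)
The plan is to exploit the variational characterization
\[
\frac{1}{\CDpol{\mu}{d}(\x)} = \min_{p \in \R[\x]_d,\ p(\x)=1} \int p^2\, d\mu,
\]
whose minimizer is the normalized reproducing kernel $p^*_\mu(\y) := \CDkernel{\mu}{d}(\x, \y)/\CDpol{\mu}{d}(\x)$, satisfying $p^*_\mu(\x) = 1$ and $\int (p^*_\mu)^2\, d\mu = 1/\CDpol{\mu}{d}(\x)$. Writing $p^* := p^*_{\mu_\samples}$, suboptimality of $p^*$ for the $\mu_\sampless$-problem yields
\[
\frac{1}{\CDpol{\mu_\sampless}{d}(\x)} - \frac{1}{\CDpol{\mu_\samples}{d}(\x)} \leq \int (p^*)^2\, d\mu_\sampless - \int (p^*)^2\, d\mu_\samples,
\]
so it suffices to control the right-hand side by $d_W(\mu_\samples, \mu_\sampless)$ times a Lipschitz constant of $(p^*)^2$.

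Next I invoke the Kantorovich--Rubinstein inequality: for any $L$-Lipschitz function $f$ on $[-1,1]^n$, testing against an optimal transport plan $\gamma$ between $\mu_\samples$ and $\mu_\sampless$ gives $\left|\int f\, d(\mu_\sampless - \mu_\samples)\right| \leq L \cdot d_W(\mu_\samples, \mu_\sampless)$. Applied to $f = (p^*)^2$, the task becomes bounding $\mathrm{Lip}((p^*)^2)$, which I handle in two steps: (i) Cauchy--Schwarz on the reproducing kernel, $\CDkernel{\mu_\samples}{d}(\x,\y)^2 \leq \CDpol{\mu_\samples}{d}(\x)\cdot\CDpol{\mu_\samples}{d}(\y)$, gives the sup-norm bound $\|(p^*)^2\|_\infty \leq \|\CDpol{\mu_\samples}{d}\|_\infty / \CDpol{\mu_\samples}{d}(\x)$; (ii) a multivariate Markov-type inequality on $[-1,1]^n$ bounds the Lipschitz constant of a polynomial of degree $2d$ by $C_{n,d} \cdot \|(p^*)^2\|_\infty$.

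Combining these estimates gives
\[
\frac{1}{\CDpol{\mu_\sampless}{d}(\x)} - \frac{1}{\CDpol{\mu_\samples}{d}(\x)} \leq C_{n,d} \cdot \frac{\|\CDpol{\mu_\samples}{d}\|_\infty}{\CDpol{\mu_\samples}{d}(\x)} \cdot d_W(\mu_\samples,\mu_\sampless),
\]
and clearing denominators by multiplying through by $\CDpol{\mu_\samples}{d}(\x)\cdot\CDpol{\mu_\sampless}{d}(\x)$ delivers the stated inequality in the regime $\CDpol{\mu_\samples}{d}(\x) \geq \CDpol{\mu_\sampless}{d}(\x)$; the reverse direction follows by running the same argument with the roles of $\samples$ and $\sampless$ interchanged (using $p^*_{\mu_\sampless}$ as test polynomial). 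The main obstacle I foresee is producing the advertised constant $C_{n,d} = 4\,s(n,d)\,d^2$: the standard partial-derivative Markov bound $\|\partial_i q\|_\infty \leq k^2\|q\|_\infty$ on $[-1,1]^n$ yields only $\mathrm{Lip}((p^*)^2) \lesssim \sqrt{n}\, d^2 \|(p^*)^2\|_\infty$, whereas the $s(n,d)$ factor likely emerges from expanding $(p^*)^2 = \CDkernel{\mu_\samples}{d}(\x,\cdot)^2/\CDpol{\mu_\samples}{d}(\x)^2$ term-by-term in the orthonormal basis $\{b_\alpha\}$, applying Markov to each product $b_\alpha b_\beta$ of degree $\leq 2d$, and resumming the $s(n,d)^2$ contributions via Cauchy--Schwarz to recover the coefficient pattern of $\CDpol{\mu_\samples}{d}(\x)$ in the denominator.
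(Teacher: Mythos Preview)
Your variational approach is genuinely different from the paper's, and your worry about the constant is misplaced in the good direction: applying Lemma~\ref{LEM:Markov} directly to the single polynomial $(p^*)^2$ of degree $2d$ already gives $\mathrm{Lip}\bigl((p^*)^2\bigr) \leq 4d^2\,\|(p^*)^2\|_\infty$, so no $s(n,d)$ factor ever enters. (In the paper the $s(n,d)$ comes from a crude row-sum bound on a matrix operator norm.) There is no need to expand term-by-term.

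The real gap is the ``reverse direction''. Swapping the roles of $\samples$ and $\sampless$ does \emph{not} recover the stated inequality: using $p^*_{\mu_\sampless}$ as test function yields
\[
\CDpol{\mu_\sampless}{d}(\x) - \CDpol{\mu_\samples}{d}(\x) \;\leq\; C_{n,d}\,\|\CDpol{\mu_\sampless}{d}\|_\infty \cdot d_W(\mu_\samples,\mu_\sampless)\cdot \CDpol{\mu_\samples}{d}(\x),
\]
with $\|\CDpol{\mu_\sampless}{d}\|_\infty$ on the right, whereas Theorem~\ref{THM:wasstab} demands $\|\CDpol{\mu_\samples}{d}\|_\infty$ in \emph{both} cases. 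This asymmetry is not cosmetic: it is precisely what makes Corollary~\ref{COR:divstab} and Proposition~\ref{PROP:localLip} work (one needs to control $\|\CDpol{\mu_\sampless}{d}\|_\infty$ in terms of $\|\CDpol{\mu_\samples}{d}\|_\infty$ alone). The variational argument is intrinsically one-sided here: to upper-bound $1/\CDpol{\mu_\samples}{d}(\x) - 1/\CDpol{\mu_\sampless}{d}(\x)$ you are forced to test with the $\mu_\sampless$-minimizer, whose sup-norm is governed by $\|\CDpol{\mu_\sampless}{d}\|_\infty$. The paper sidesteps this by a matrix square-root trick: working in a $\mu_\samples$-orthonormal basis $\basis$ and writing $M$ for the $\mu_\sampless$-moment matrix in that basis, one has
\[
|\CDpol{\mu_\samples}{d}(\x)-\CDpol{\mu_\sampless}{d}(\x)| \;=\; \bigl|\bigl(M^{-1/2}\basis(\x)\bigr)^\top(M-\mathrm{Id})\bigl(M^{-1/2}\basis(\x)\bigr)\bigr| \;\leq\; \|M-\mathrm{Id}\|_{\rm op}\cdot\CDpol{\mu_\sampless}{d}(\x),
\]
which handles both signs at once and places $\CDpol{\mu_\sampless}{d}(\x)$ on the right automatically, while $\|M-\mathrm{Id}\|_{\rm op}$ is bounded using only $\|\CDpol{\mu_\samples}{d}\|_\infty$ because the basis was chosen orthonormal for $\mu_\samples$. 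Your argument does deliver the symmetric statement (with $\max\{\|\CDpol{\mu_\samples}{d}\|_\infty,\|\CDpol{\mu_\sampless}{d}\|_\infty\}$), which is enough for Proposition~\ref{PROP:logstab} but not for the theorem as stated.
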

We prove Theorem~\ref{THM:wasstab} in Appendix~\ref{APP:technical} by adapting the techniques of Section~6.2 in~\cite{LPP2022} to our setting. This requires some small technical statements on Wasserstein distance and supremum norms of polynomials on compact domains, which we also give there. 

We have the following immediate consequence:
\begin{corollary} \label{COR:divstab}
Let $\samples, \sampless$ as in the above, and let $C_{n, d} > 0$ be the constant of Theorem~\ref{THM:wasstab}. Then for all $\x \in [-1,1]^n$, we have:
\[
    |\CDpol{\mu_{\samples}}{d}(\x) / \CDpol{\mu_{\sampless}}{d}(\x) - 1| \leq C_{n, d} \cdot \|\CDpol{\mu_\samples}{d}\|_{\infty} \cdot d_W(\mu_\samples, \mu_{\sampless}).
\]  
\end{corollary}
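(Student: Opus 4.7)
The claim is flagged in the paper as an \emph{immediate consequence} of Theorem~\ref{THM:wasstab}, and indeed the plan is simply to divide the inequality of Theorem~\ref{THM:wasstab} through by $\CDpol{\mu_{\sampless}}{d}(\x)$ pointwise. For this to be legitimate we need that $\CDpol{\mu_{\sampless}}{d}(\x) > 0$ for every $\x$; this is guaranteed by the representation~\eqref{EQ:christpol} of the Christoffel polynomial as a sum of squares, together with the definiteness of $\langle \cdot,\cdot \rangle_{\mu_\sampless}$ granted by Assumption~\ref{ASSU:nondegen} (in fact the paper notes $\CDpol{\mu_{\sampless}}{d}(\x)\geq 1$ on $[-1,1]^n$, which more than suffices).

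Concretely, starting from
\[
|\CDpol{\mu_{\samples}}{d}(\x) - \CDpol{\mu_{\sampless}}{d}(\x)| \leq C_{n, d} \cdot \|\CDpol{\mu_\samples}{d}\|_{\infty} \cdot d_W(\mu_\samples, \mu_{\sampless}) \cdot \CDpol{\mu_\sampless}{d}(\x),
\]
I would divide both sides by the strictly positive number $\CDpol{\mu_{\sampless}}{d}(\x)$ and observe that the left-hand side becomes
\[
\left|\frac{\CDpol{\mu_{\samples}}{d}(\x)}{\CDpol{\mu_{\sampless}}{d}(\x)} - 1\right|,
\]
while the factor $\CDpol{\mu_\sampless}{d}(\x)$ cancels on the right, yielding exactly the stated bound. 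Since $\x \in [-1,1]^n$ was arbitrary and the bound on the right is independent of $\x$, the corollary follows.

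There is no substantive obstacle: all the analytic work — the careful control of $|\CDpol{\mu_\samples}{d}(\x) - \CDpol{\mu_\sampless}{d}(\x)|$ in terms of $d_W(\mu_\samples,\mu_\sampless)$, $\|\CDpol{\mu_\samples}{d}\|_\infty$, and a \emph{pointwise} factor $\CDpol{\mu_\sampless}{d}(\x)$ — is already absorbed into Theorem~\ref{THM:wasstab}. The only thing one should be a bit careful to record is the role of positivity of $\CDpol{\mu_\sampless}{d}$; I would mention this explicitly in one sentence so that the division is justified without appeal to Assumption~\ref{ASSU:nondegen} being implicit.
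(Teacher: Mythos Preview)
Your proposal is correct and matches the paper's treatment: the corollary is stated as an immediate consequence of Theorem~\ref{THM:wasstab}, obtained precisely by dividing through by the strictly positive quantity $\CDpol{\mu_{\sampless}}{d}(\x)$. The paper does not spell this out separately, but the identical division step appears in the proof of Proposition~\ref{PROP:logstab}, so your write-up is exactly in line with the intended argument.
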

After taking logarithms (see Appendix~\ref{APP:technical} for details), we then obtain:
\begin{proposition} \label{PROP:logstab}
    Let $\samples,\sampless \subseteq \R^n$ be as above, and let $C_{n,d}>0$ be the constant of Theorem~\ref{THM:wasstab}. Then we have:
    \begin{align} \label{EQ:logstab}
         \|\log\CDpol{\mu_\samples}{d} - \log\CDpol{\mu_\sampless}{d}\|_\infty &\leq \log \big(C_{n,d} \cdot \max\{\|\CDpol{\mu_\samples}{d}\|_\infty, \|\CDpol{\mu_\sampless}{d}\|_\infty\} \cdot d_W(\mu_\samples,\mu_\sampless) + 1 \big). 
        %  \\
        %  &\leq C_{n,d} \cdot \max\{\|\CDpol{\mu_\samples}{d}\|_\infty, \|\CDpol{\mu_\sampless}{d}\|_\infty\} \cdot d_W(\mu_\samples,\mu_\sampless).
        %  %\\
         %&\approx 
    %\begin{cases}
     %   C_{n,d} \cdot \max\{\|\CDpol{\mu_\samples}{d}\|_\infty, \|\CDpol{\mu_\sampless}{d}\|_\infty\} \cdot d_W(\mu_\samples,\mu_\sampless) & \text{for small } d_W(\mu_\samples,\mu_\sampless) \\
     %   \log(C_{n,d}) + \log(\max\{\|\CDpol{\mu_\samples}{d}\|_\infty, \|\CDpol{\mu_\sampless}{d}\|_\infty\}) + \log(d_W(\mu_\samples,\mu_\sampless)), & \text{else }  .
     %   \end{cases}
    \end{align}
\end{proposition}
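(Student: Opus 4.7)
The plan is to deduce Proposition \ref{PROP:logstab} directly from Corollary \ref{COR:divstab} by controlling the logarithm of the ratio $r(\x) := \CDpol{\mu_{\samples}}{d}(\x) / \CDpol{\mu_{\sampless}}{d}(\x)$, which is well-defined and positive since the Christoffel polynomial is strictly positive. Note that
\[
    \log \CDpol{\mu_\samples}{d}(\x) - \log \CDpol{\mu_\sampless}{d}(\x) = \log r(\x),
\]
so the desired supremum norm bound is a pointwise bound on $|\log r(\x)|$ for $\x \in [-1,1]^n$.

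First, I would apply Corollary \ref{COR:divstab} as stated to obtain the upper bound
\[
    r(\x) \leq 1 + A, \qquad A := C_{n,d} \cdot \|\CDpol{\mu_\samples}{d}\|_\infty \cdot d_W(\mu_\samples, \mu_\sampless),
\]
which by monotonicity of the logarithm gives $\log r(\x) \leq \log(1+A)$. The symmetric lower bound $r(\x) \geq 1 - A$ is available, but using $\log(1-A)$ would not match the stated form and would require $A < 1$. Instead, I would apply Corollary \ref{COR:divstab} a second time with the roles of $\samples$ and $\sampless$ swapped (using that $d_W$ is symmetric) to obtain
\[
    |1/r(\x) - 1| \leq B, \qquad B := C_{n,d} \cdot \|\CDpol{\mu_\sampless}{d}\|_\infty \cdot d_W(\mu_\samples, \mu_\sampless),
\]
and in particular $1/r(\x) \leq 1 + B$, which rearranges to $\log r(\x) \geq -\log(1+B)$.

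Combining the two one-sided estimates yields
\[
    |\log r(\x)| \leq \max\bigl\{\log(1+A), \log(1+B)\bigr\} = \log\bigl(1 + \max\{A, B\}\bigr),
\]
and taking the supremum over $\x \in [-1,1]^n$ produces exactly the right-hand side of \eqref{EQ:logstab}. There is no real obstacle here beyond the symmetric application of Corollary \ref{COR:divstab}; the key observation is that controlling $\log r$ from below via the reciprocal (rather than via $\log(1-A)$) is what gives the clean additive form $\log(1 + \cdots)$ and removes any need for smallness assumptions on $d_W(\mu_\samples, \mu_\sampless)$.
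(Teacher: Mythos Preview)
Your proposal is correct and essentially the same as the paper's proof: both arguments bound $|\log r(\x)|$ by applying the ratio estimate of Corollary~\ref{COR:divstab} (equivalently Theorem~\ref{THM:wasstab}) once in each direction, i.e., with the roles of $\samples$ and $\sampless$ swapped, and then take logarithms. The only cosmetic difference is that the paper organizes this as a case split on whether $\CDpol{\mu_\samples}{d}(\x) \geq \CDpol{\mu_\sampless}{d}(\x)$ or not, while you apply both one-sided bounds uniformly and then take the maximum; the content is identical.
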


Note that when the quantity $C := C_{n,d} \cdot \max\{\|\CDpol{\mu_\samples}{d}\|_\infty, \|\CDpol{\mu_\sampless}{d}\|_\infty\} $ is close to $0$, the bound~\eqref{EQ:logstab} tells us that
$
\|\log\CDpol{\mu_\samples}{d} - \log\CDpol{\mu_\sampless}{d} \|_\infty \leq C \cdot d_W(\mu_\samples,\mu_\sampless)
$.
On the other hand, if $C \gg 1$, 
% $C_{n,d} \cdot \max\{\|\CDpol{\mu_\samples}{d}\|_\infty, \|\CDpol{\mu_\sampless}{d}\|_\infty\} \cdot d_W(\mu_\samples,\mu_\sampless)$ 
the bound on $\|\log(\CDpol{\mu_\samples}{d}) - \log(\CDpol{\mu_\sampless}{d} )\|_\infty$ is exponentially smaller than in Theorem~\ref{THM:wasstab}.
Finally, we may get rid of the dependence on $\sampless$ in 
Corollary~\ref{COR:divstab} after making an additional assumption.
\begin{proposition} \label{PROP:localLip}
Let $\samples \subseteq [-1,1]^n$ be a (fixed) sample set satisfying Assumption~\ref{ASSU:nondegen}. Let $C_{n,d} > 0$ be the constant of Theorem~\ref{THM:wasstab}, and assume that $\sampless \subseteq [-1, 1]^n$ is such that $C_{n,d} \cdot \|\CDpol{\mu_{\samples}}{d}\|_\infty \cdot d_W(\mu_\samples, \mu_\sampless)  \leq 1/2$. Then we have $\|\CDpol{\mu_\sampless}{d} \|_\infty \leq 2 \|\CDpol{\mu_\samples}{d}\|_{\infty}$.
In particular, the bound~\eqref{EQ:logstab} then reads:
\[
\|\log\CDpol{\mu_\samples}{d} - \log\CDpol{\mu_\sampless}{d} \|_\infty \leq \log \big(C_{n,d} \cdot 2\|\CDpol{\mu_\samples}{d}\|_\infty \cdot d_W(\mu_\samples,\mu_\sampless) + 1 \big).
\]
\end{proposition}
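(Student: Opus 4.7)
The plan is to extract the bound $\|\CDpol{\mu_\sampless}{d}\|_\infty \leq 2\|\CDpol{\mu_\samples}{d}\|_\infty$ directly from Theorem~\ref{THM:wasstab}, and then feed it into Proposition~\ref{PROP:logstab}. The idea is that Theorem~\ref{THM:wasstab} already controls the pointwise difference $|\CDpol{\mu_{\samples}}{d}(\x) - \CDpol{\mu_{\sampless}}{d}(\x)|$ by a multiple of $\CDpol{\mu_\sampless}{d}(\x)$ itself, so for small enough Wasserstein distance we can absorb the $\CDpol{\mu_\sampless}{d}(\x)$ term on the left and recover an upper bound on $\CDpol{\mu_\sampless}{d}(\x)$ by $\CDpol{\mu_\samples}{d}(\x)$.

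Concretely, let $\varepsilon := C_{n,d} \cdot \|\CDpol{\mu_\samples}{d}\|_\infty \cdot d_W(\mu_\samples, \mu_\sampless)$, so by hypothesis $\varepsilon \leq 1/2$. First I would apply Theorem~\ref{THM:wasstab} together with the reverse triangle inequality at an arbitrary $\x \in [-1,1]^n$ to get
\[
\CDpol{\mu_\sampless}{d}(\x) - \CDpol{\mu_\samples}{d}(\x) \leq |\CDpol{\mu_\samples}{d}(\x) - \CDpol{\mu_\sampless}{d}(\x)| \leq \varepsilon \cdot \CDpol{\mu_\sampless}{d}(\x),
\]
which rearranges to $(1-\varepsilon)\CDpol{\mu_\sampless}{d}(\x) \leq \CDpol{\mu_\samples}{d}(\x)$. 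Since $1-\varepsilon \geq 1/2$, this yields $\CDpol{\mu_\sampless}{d}(\x) \leq 2\CDpol{\mu_\samples}{d}(\x)$, and taking the supremum over $\x \in [-1,1]^n$ gives $\|\CDpol{\mu_\sampless}{d}\|_\infty \leq 2\|\CDpol{\mu_\samples}{d}\|_\infty$.

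For the second half, I would simply substitute this inequality into Proposition~\ref{PROP:logstab}: the $\max\{\|\CDpol{\mu_\samples}{d}\|_\infty, \|\CDpol{\mu_\sampless}{d}\|_\infty\}$ appearing there is bounded above by $2\|\CDpol{\mu_\samples}{d}\|_\infty$, and monotonicity of $\log$ gives the stated inequality. No real obstacle is anticipated here, since everything reduces to the pointwise bound of Theorem~\ref{THM:wasstab} and a one-line absorption argument; the only subtle point is the direction of the reverse triangle inequality and making sure that the absorbed factor $(1-\varepsilon)$ stays bounded away from zero, which is exactly what the hypothesis $\varepsilon \leq 1/2$ guarantees.
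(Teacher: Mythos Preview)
Your argument is correct and is exactly the route the paper intends: the paper does not spell out a proof of Proposition~\ref{PROP:localLip}, but the surrounding text indicates it follows directly from Corollary~\ref{COR:divstab} (equivalently, from Theorem~\ref{THM:wasstab}) by precisely the absorption step you carry out, after which one substitutes $\|\CDpol{\mu_\sampless}{d}\|_\infty \leq 2\|\CDpol{\mu_\samples}{d}\|_\infty$ into the bound of Proposition~\ref{PROP:logstab}.
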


\subsection{An exact algorithm for the persistence module} \label{SEC:algorithm}
As we explain in Section~\ref{SEC:CD} (see Proposition~\ref{PROP:CDcomplexity}), the Christoffel polynomial $\CDpol{\mu_{\samples}}{d}$ of degree $d$ may be computed in time $O(s(n,d)^3 + N s(n,d)^2)$, where $N = |\samples|$ is the number of samples and $s(n,d) = {n+d\choose d}$ is the size of the moment matrix~\eqref{EQ:moment_matrix}. Once we have access to~$\CDpol{\mu_{\samples}}{d}$, it remains to compute the persistent homology of the filtration $\{\x \in [-1, 1]^n : \CDpol{\mu_{\samples}}{d}(\x) \leq t\}_{t \geq 0}$. The set $[-1, 1]^n$ is a particularly simple example of a basic (closed) \emph{semialgebraic set}. That is, a subset of $\R^{n}$ defined by a finite number of polynomial (in)equalities; namely:
\[
    [-1, 1]^n = \{ \x \in \R^n : g_i(\x) := 1-\x_i^2 \geq 0~\text{ for } i=1, 2, \ldots, n\}.
\]
We may therefore use the following recent result of Basu and Karisani~\cite{Basu2022}.
\begin{theorem}[Basu, Karisani (2022)] \label{Basu}
For fixed $p \in \N$, there is an algorithm that takes
as input a description of a closed and bounded semialgebraic set $ \X = \{\x \in \R^n : {g_i(\x) \geq 0}, \ {i=1,2,\dots,m}\}$, and a polynomial $P \in \R[\x]$, and outputs the persistence diagram associated to the filtration $\{ \x \in \X : P(\x) \leq t \}_{t \geq 0}$ up to dimension $p$. The complexity of this algorithm is bounded by $(md)^{O(n)}$, where $d = \max \{ \mathrm{deg}(g_i), \mathrm{deg} (P)\}$ is the largest degree amongst $P$ and the polynomial inequalities defining $\X$.
\end{theorem}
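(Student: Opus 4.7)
The plan is to reduce the continuous semialgebraic filtration to a combinatorial one on a finite simplicial complex of size $(md)^{O(n)}$, and then apply a standard persistent homology algorithm on that complex. I would proceed in three stages: locate the critical values of $P$ on $\X$, construct a triangulation of $\X$ compatible with the filtration, and read off the diagram from the resulting filtered simplicial complex.

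First, I would compute (or isolate, as real algebraic numbers) the values of $t$ at which the topology of $\X_t := \{\x \in \X : P(\x) \leq t\}$ changes. By stratified Morse theory these are contained in the critical values of $P$ restricted to the smooth strata of $\X$ indexed by subsets $I \subseteq \{1,\dots,m\}$ along which exactly the $g_i$, $i\in I$, vanish. On each stratum the critical points are the real solutions of a polynomial system of degree $O(d)$ in $n+|I|$ variables (Lagrange conditions $\nabla P = \sum_{i\in I} \lambda_i \nabla g_i$), so by the standard effective real algebraic geometry tools — point-per-connected-component, univariate representations, sign determination — all critical values can be produced within the claimed $(md)^{O(n)}$ budget.

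Second, I would build a semialgebraic triangulation of $\X$ compatible simultaneously with the defining polynomials $g_1,\dots,g_m$ and with the hypersurfaces $\{P = t_k\}$ for the (polynomially many) critical values $t_1 < t_2 < \cdots$. This turns the continuous filtration into a filtration of a finite simplicial complex whose persistent homology, up to dimension $p$, agrees with $\mathrm{PH}_*(\X, P)$ by standard deformation-retract arguments away from critical values. Applying matrix reduction on the boundary operators of this filtered complex recovers the persistence diagram in time polynomial in the complex size, which dominates the overall cost.

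The principal obstacle is the second step: a naive cylindrical algebraic decomposition compatible with $\{g_i\}$ and $\{P-t_k\}$ has \emph{doubly} exponential size in $n$, which would destroy the singly exponential bound. The singly exponential complexity requires dimension-aware machinery — roadmap-based connectivity, divide-and-conquer on Morse critical sets, and quantitative Oleinik–Petrovsky–Thom–Milnor bounds on sums of Betti numbers of real semialgebraic sets — to produce a complex of size $(md)^{O(n)}$ that still realises the filtration and makes the inclusion-induced maps $\mathrm{H}_*(\X_{t_k}) \to \mathrm{H}_*(\X_{t_{k+1}})$ combinatorially tractable. Orchestrating these ingredients while preserving the filtration structure is the technical core of~\cite{Basu2022}, and is what I would lean on as a black box rather than rebuild from scratch.
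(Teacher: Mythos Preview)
The paper does not prove this theorem at all: it is quoted verbatim as a result of Basu and Karisani~\cite{Basu2022} and used as a black box to derive the corollary on computing $\persmod{d}{\samples}$. There is therefore nothing in the paper to compare your sketch against.

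As a standalone outline of~\cite{Basu2022}, your three-stage plan (isolate critical values via stratified Morse theory and univariate representations; produce a filtration-compatible cell complex of singly exponential size; run matrix reduction) is in the right spirit, and you correctly flag that a naive cylindrical decomposition would only give a doubly exponential bound. However, your final paragraph openly defers the decisive step --- building a singly exponential complex on which the inclusion-induced maps in homology are computable --- back to~\cite{Basu2022} itself. Taken as a proof of the stated theorem this is circular: the content of the theorem \emph{is} precisely that orchestration. If your intent was merely to summarise why the result is plausible and where the difficulty lies, that is accurate; but it is not a proof, and in any case the present paper neither asks for nor supplies one.
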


\begin{corollary}[Exact algorithm]
For fixed $p \in \N$, there is an exact algorithm that computes the persistence diagram associated to $\persmod{d}{\samples}$ up to dimension $p$. Its runtime is bounded by:
\[
    O(s(n,d)^3 + N s(n,d)^2) + (nd)^{O(n)}.
\]
\end{corollary}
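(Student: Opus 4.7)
The plan is to split the algorithm into two stages: first compute an explicit representation of $\CDpol{\mu_\samples}{d}$ as a polynomial in the standard monomial basis, then feed this polynomial together with a semialgebraic description of the box $[-1,1]^n$ into the Basu--Karisani algorithm (Theorem~\ref{Basu}).

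For the first stage I would take $\basis$ in Proposition~\ref{PROP:matrixinverse} to be the monomial basis of $\R[\x]_d$. By Proposition~\ref{PROP:CDcomplexity}, assembling the moment matrix $M^{\mu_\samples}_d(\basis)$ from the samples and inverting it costs $O(Ns(n,d)^2 + s(n,d)^3)$. Forming the quadratic form $P(\x) := \basis_d(\x)^\top (M^{\mu_\samples}_d(\basis))^{-1} \basis_d(\x)$ and expanding it to coefficients in the monomial basis of $\R[\x]_{2d}$ is a further $O(s(n,d)^2)$ work, so this stage fits inside the first summand of the claimed runtime. Note that $P = \CDpol{\mu_\samples}{d}$ has degree exactly $2d$.

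For the second stage, observe that
$
[-1,1]^n = \{\x \in \R^n : g_i(\x) := 1 - \x_i^2 \geq 0, \ i=1,\ldots,n\}
$
is a bounded closed semialgebraic set defined by $m = n$ polynomial inequalities of degree~$2$. Applying Theorem~\ref{Basu} to this description together with the polynomial $P = \CDpol{\mu_\samples}{d}$ yields the persistence diagram of the sublevel set filtration $\{\x \in [-1,1]^n : \CDpol{\mu_\samples}{d}(\x) \leq s\}_{s \geq 0}$, up to dimension $p$, in time $(m \cdot d')^{O(n)}$ where $d' = \max\{2, 2d\} = 2d$ (for $d \geq 1$). This is $(nd)^{O(n)}$, matching the second summand.

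Finally, I would pass from the filtration by $\CDpol{\mu_\samples}{d}$ to the filtration by $\log \CDpol{\mu_\samples}{d}$ that defines $\persmod{d}{\samples}$. Since $\log : [1,\infty) \to [0,\infty)$ is a strictly increasing homeomorphism and $\CDpol{\mu_\samples}{d}(\x) \geq 1$ on $[-1,1]^n$, the two filtrations differ only by a reparametrisation of the filtration parameter. The persistence diagrams therefore coincide after replacing each birth--death pair $(s_b, s_d)$ by $(\log s_b, \log s_d)$; this transformation is applied in time linear in the number of intervals, which is absorbed by the Basu--Karisani bound. I do not anticipate a genuine obstacle here: the only points worth stating carefully are that the monomial degree of $\CDpol{\mu_\samples}{d}$ is $2d$ rather than $d$, and that the rescaling from $P$ to $\log P$ is legitimate because $P \geq 1$ on the box, so no artificial birth or death times are introduced.
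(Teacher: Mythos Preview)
Your proposal is correct and follows exactly the approach the paper takes: the corollary is simply the concatenation of Proposition~\ref{PROP:CDcomplexity} (first summand) with Theorem~\ref{Basu} applied to $\X=[-1,1]^n$ and $P=\CDpol{\mu_\samples}{d}$ (second summand), together with the logarithmic reparametrisation already noted just after the definition of $\persmod{d}{\samples}$. Your care in recording that $\deg \CDpol{\mu_\samples}{d}=2d$ and that $\CDpol{\mu_\samples}{d}\geq 1$ so the $\log$ rescaling is a legitimate monotone reparametrisation is entirely appropriate; the paper leaves both points implicit.
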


\newcommand{\triang}{\mathcal{K}}
\newcommand{\approxf}{\overline{f}}
\subsection{An effective approximation scheme}
\label{SEC:approximation}

To the authors' knowledge, no implementation exists of the algorithm mentioned in Theorem~\ref{Basu} at the time of writing. In order to perform numerical experiments, we use a simple approximation scheme for $\persmod{d}{\samples}$. This method works in the more general case of approximating $\mathrm{PH}_*([-1,1]^n,f)$ for any Lipschitz continuous function $f: [-1,1]^n \to \R$. Succinctly put, we first fix $m \in \N$, and construct the Freudenthal triangulation~\cite{Freudenthal,eaves1984course} of $[-1,1]^n$, with vertices equal to the lattice points of $\frac{2}{m} \cdot \mathbb{Z}^n$ contained in $[-1,1]^n$. See Figure~\ref{FIG:Freudenthal}. We denote this triangulation by $\triang_m$. Note that the diameter of any simplex in this triangulation is equal to $2\sqrt{n}/m$. We then evaluate $f$ on each of the vertices, and compute the persistent homology of the lower-star filtration on $\triang_m$ induced by these function values. This persistence module, denoted by by $\mathrm{PH}_*(\triang_m,f)$, approximates $\mathrm{PH}_*([-1,1]^n,f)$. The following proposition gives a guarantee on the quality of this approximation:

\begin{proposition}\label{PROP:Lipschitzstability}
    Let $f: [-1,1]^n \to \R$ be a Lipschitz continuous function with Lipschitz constant $L_f$, choose $m\in \N$, and let $\triang_m$ be as above. Then
    \begin{align*}
        d_B( \Dgm(\mathrm{PH}_*([-1,1]^n,f)),~ \Dgm(\mathrm{PH}_*(\triang_m,f)) ) \leq L_f \cdot  2 \sqrt{n}/m.
    \end{align*}
\end{proposition}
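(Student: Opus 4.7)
The plan is to reduce the statement to Theorem~\ref{COR:contstab} (the sup-norm stability theorem for persistent homology) by comparing $f$ with its piecewise linear interpolant on the triangulation $\triang_m$.

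First, I would define $\approxf : [-1,1]^n \to \R$ to be the unique continuous function that agrees with $f$ on the vertices of $\triang_m$ and is affine on every simplex of $\triang_m$. The key intermediate identification is that $\mathrm{PH}_*(\triang_m, f)$, the simplicial persistence module obtained from the lower-star filtration induced by the vertex values of $f$, coincides with $\mathrm{PH}_*([-1,1]^n, \approxf)$, the sublevel set persistence module of the continuous function $\approxf$. This is a standard fact: for each $t \in \R$, the geometric realization of the sublevel subcomplex of $\triang_m$ with respect to $f|_{\mathrm{vert}}$ deformation retracts onto $\approxf^{-1}((-\infty, t])$, and these retractions are compatible with inclusions across levels $s \leq t$, so the induced persistence modules are isomorphic. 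I would cite this explicitly (it is sometimes attributed to the PL-interpolation construction, e.g.\ in~\cite{persistencesurvey}).

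Second, I would bound $\|f - \approxf\|_\infty$. For any $\x \in [-1,1]^n$, pick a simplex $\sigma \in \triang_m$ with $\x \in \sigma$, and write $\x = \sum_v \lambda_v v$ as a convex combination of the vertices $v$ of $\sigma$, so that $\approxf(\x) = \sum_v \lambda_v f(v)$. Using $\sum_v \lambda_v = 1$ and the Lipschitz property of $f$,
\[
|f(\x) - \approxf(\x)| = \Bigl| \sum_v \lambda_v \bigl(f(\x) - f(v)\bigr) \Bigr| \leq \sum_v \lambda_v \cdot L_f \cdot \|\x - v\|_2 \leq L_f \cdot \mathrm{diam}(\sigma).
\]
Since every simplex of the Freudenthal triangulation $\triang_m$ has diameter at most $2\sqrt{n}/m$, we obtain $\|f - \approxf\|_\infty \leq L_f \cdot 2\sqrt{n}/m$.

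Finally, applying Theorem~\ref{COR:contstab} to the pair $(f, \approxf)$ on the CW-complex $[-1,1]^n$ yields
\[
d_B\bigl( \Dgm(\mathrm{PH}_*([-1,1]^n, f)),~\Dgm(\mathrm{PH}_*([-1,1]^n, \approxf)) \bigr) \leq \|f - \approxf\|_\infty \leq L_f \cdot 2\sqrt{n}/m,
\]
and combining with the identification $\mathrm{PH}_*(\triang_m, f) \cong \mathrm{PH}_*([-1,1]^n, \approxf)$ from the first step gives the desired bound.

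The main obstacle is really the first step: rigorously stating and justifying the equivalence between the lower-star simplicial persistence module and the sublevel set persistence module of the PL interpolant $\approxf$. The geometric bound in the second step and the appeal to Theorem~\ref{COR:contstab} are essentially routine. If one prefers to avoid citing the PL-interpolation equivalence as a black box, an alternative route is to compare $\mathrm{PH}_*(\triang_m, f)$ directly with $\mathrm{PH}_*(\triang_m, \approxf|_{\mathrm{vert}}) = \mathrm{PH}_*(\triang_m, f)$ (a tautology) and then invoke the algebraic stability theorem for interleaved persistence modules to bridge $\triang_m$ and $[-1,1]^n$; either formulation yields the same final constant $L_f \cdot 2\sqrt{n}/m$.
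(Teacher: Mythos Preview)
Your proposal is correct and follows essentially the same route as the paper: define the piecewise-linear interpolant $\approxf$, identify $\mathrm{PH}_*(\triang_m,f)\cong\mathrm{PH}_*([-1,1]^n,\approxf)$ by citing a standard reference, bound $\|f-\approxf\|_\infty$ via the convex-combination argument and the diameter $2\sqrt{n}/m$ of the Freudenthal simplices, and conclude with Theorem~\ref{COR:contstab}. The paper cites~\cite{bookEdelsbrunner} for the lower-star/PL-sublevel identification rather than giving the deformation-retraction sketch you outline, but otherwise the arguments coincide.
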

Since the function $\log\CDpol{\mu_\samples}{d} : [-1,1]^n \to \R$ is differentiable and $[-1,1]^n$ is compact, $\log\CDpol{\mu_\samples}{d}$ is Lipschitz continuous, and so the above proposition applies to our setting. For a proof and a more detailed discussion on this approximation scheme, we refer to Appendix~\ref{APP: approximation}.
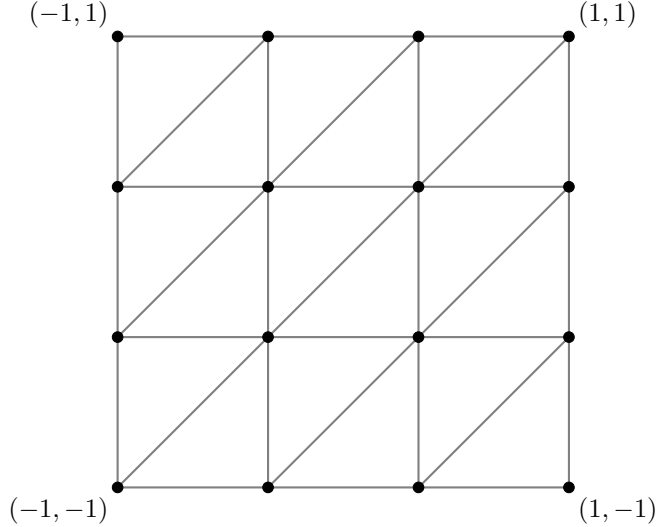
\begin{figure}
    \centering
    \begin{tikzpicture}
        \draw[gray, thick] (-3,3) -- (3,3);
        \draw[gray, thick] (-3,-1) -- (3,-1);
        \draw[gray, thick] (-3,1) -- (3,1);
        \draw[gray, thick] (-3,-3) -- (3,-3);
        \draw[gray, thick] (-3,3) -- (-3,-3);
        \draw[gray, thick] (-1,3) -- (-1,-3);
        \draw[gray, thick] (1,3) -- (1,-3);
        \draw[gray, thick] (3,3) -- (3,-3);
        \draw[gray, thick] (-3,1) -- (-1,3);
        \draw[gray, thick] (-3,-1) -- (1,3);
        \draw[gray, thick] (-3,-3) -- (3,3);
        \draw[gray, thick] (-1,-3) -- (3,1);
        \draw[gray, thick] (1,-3) -- (3,-1);

        \filldraw[black] (3,3) circle (2pt) node[anchor=south west]{$(1,1)$};
        \filldraw[black] (3,-3) circle (2pt) node[anchor=north west]{$(1,-1)$};
        \filldraw[black] (-3,3) circle (2pt) node[anchor=south east]{$(-1,1)$};
        \filldraw[black] (-3,-3) circle (2pt) node[anchor=north east]{$(-1,-1)$};
        \filldraw[black] (3,1) circle (2pt) node{};
        \filldraw[black] (3,-1) circle (2pt) node{};
        \filldraw[black] (1,1) circle (2pt) node{};
        \filldraw[black] (1,-1) circle (2pt) node{};
        \filldraw[black] (-1,1) circle (2pt) node{};
        \filldraw[black] (-1,3) circle (2pt) node{};
        \filldraw[black] (-1,-3) circle (2pt) node{};
        \filldraw[black] (1,3) circle (2pt) node{};
        \filldraw[black] (1,-3) circle (2pt) node{};
        \filldraw[black] (-1,-1) circle (2pt) node{};
        \filldraw[black] (-3,1) circle (2pt) node{};
        \filldraw[black] (-3,-1) circle (2pt) node{};
\end{tikzpicture}
    \caption{The Freudenthal triangulation of $[-1, 1]^2$ on the lattice $\frac{2}{m} \cdot \mathbb{Z}^2$ (with $m=3$).}%
    \label{FIG:Freudenthal}
\end{figure}
\section{Numerical examples}
\label{SEC:experiments}
In this section we illustrate our new scheme by computing the persistence diagram of $\persmod{d}{\samples}$ for various toy examples $\samples \subseteq [-1, 1]^n$ for $n=1, 2, 3$. In each case, the sets $\samples$ are drawn from a measure supported $[-1, 1]^n$, after which some additional noise may be added. We compute the Christoffel polynomials using the method outlined in Section~\ref{SEC:empirical} and the linear algebra packages of {\rm NumPy}~\cite{numpy} and {\rm Scipy}~\cite{scipy}. To \emph{approximate} the persistence of the resulting filtrations, we employ the method of Section~\ref{SEC:approximation}, where we set the resolution $m=250$ for $n=1,2$ and $m=50$ for $n=3$. See Appendix~\ref{APP: approximation}. The persistence of the resulting lower-star filtration is computed using {\rm Gudhi}~\cite{gudhi}. We also use Gudhi to compute the (exact) persistent homology of the \v{C}ech filtration.

We add noise to our data sets $\samples$ in two ways. 
We say we add \emph{uniform noise} when the noisy data $\tilde \samples$ is obtained from~$\samples$ by adding $M$ points chosen uniformly at random from $[-1, 1]^n$. We say we add \emph{Gaussian noise} (with standard deviation $\sigma \geq 0$), when $\tilde \samples$ is obtained from $\samples$ by adding to each coordinate of each sample $\x \in \samples$ an independently drawn perturbation $t \sim N(0, \sigma)$.

\subsection*{A univariate example.}
It is rather instructive to consider first a simple univariate example. Let $\samples \subseteq [-1, 1]$ be drawn from a uniform measure $\mu$ supported on five disjoint intervals $I_1, I_2, \dots, I_5 \subseteq [-1, 1]$. The corresponding Christoffel polynomials ($d=4, 8, 12$) and persistence diagrams for this situation are plotted in Figure~\ref{FIG:1Dexample}. We would expect the persistence diagram of $\persmod{d}{\samples}$ to reflect the simple topology of $\mathrm{supp(\mu)}$; namely we expect $\persmod{d}{\samples}$ to consist of five interval modules, each corresponding to one of the connected components of $\mathrm{supp}(\mu)$. For $d = 8, 12$, we observe that this is indeed the case. For $d=4$, however, we see there are only four interval modules.
In the one-dimensional setting, this can be explained rather nicely. Indeed, `birth-events' correspond to (local) \emph{minima} of $\CDpol{\mu_{\samples}}{d}$, and `death-events' correspond to (local) \emph{maxima}. %
% \footnote{This correspondence can be generalized to $n > 1$, however, it is significantly more complicated in those cases.}. 
As $\CDpol{\mu_{\samples}}{4}$ is of degree $8$, it can have at most $7$ critical points, resulting in four interval modules (one of which is of infinite length).

\subsection*{The figure eight.}
In Figure~\ref{FIG:figureeight}, we plot $\log \CDpol{\mu_{\samples}}{12}$ and $\persmod{12}{\samples}$ when $\samples$ is drawn from a measure supported on two circles in three different configurations: disjoint, just intersecting, and overlapping. We observe that $\persmod{12}{\samples}$ correctly captures the underlying topology in all three cases (for clarity, the number of overlapping points in the diagram is indicated). 

\subsection*{Feature sizing}
We consider the influence of feature size in the data on our persistence module. We draw samples $\samples$ from a configuration of two circles in $[-1, 1]^n$. In Figure~\ref{FIG:smallcircles}, we depict the diagram of $\persmod{10}{\samples}$ as we decrease the \emph{radius} of one of the circles. Similarly, in Figure~\ref{FIG:sparsecircles}, we depict the diagram of $\persmod{10}{\samples}$ as we decrease the \emph{number of samples} drawn from one of the circles. In both cases, the decrease in feature size corresponds to a decrease in the length of the corresponding interval in $\persmod{10}{\samples}$. Interestingly, this is due to an earlier time of death in the former case, and due to a later time of birth in the latter.

\begin{figure}[p]
    \centering
    \begin{subfigure}{0.56\textwidth}
    \centering
    \includegraphics[width=\textwidth, trim=1cm 0.5cm 1cm 1cm,clip]{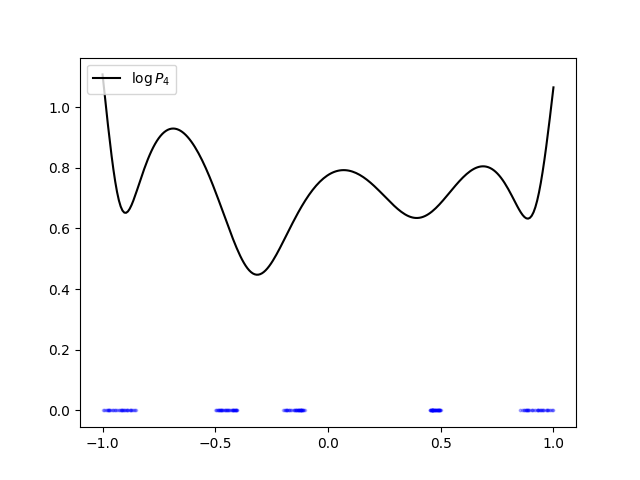}
        % \caption{$\log$-Christoffel polynomial (degree 4)}
    \end{subfigure}
    \begin{subfigure}{0.42\linewidth}
        \centering
         \includegraphics[width=\textwidth,trim=2.5cm 0.5cm 3cm 1cm,clip]{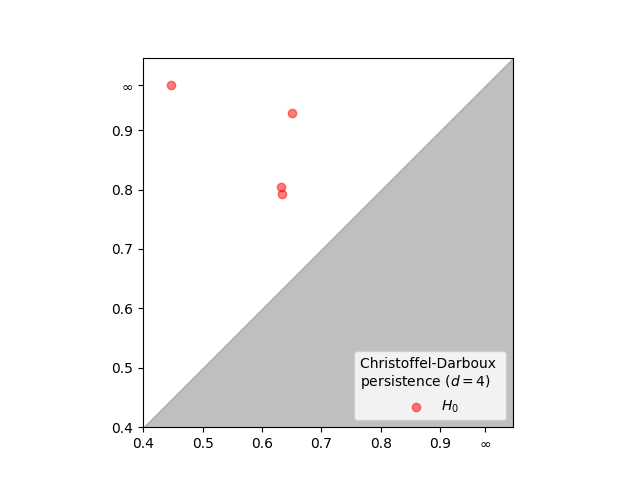}
        % \caption{Persistence diagram of $\persmod{4}{\samples}$}
    \end{subfigure}
    \begin{subfigure}{0.56\linewidth}
        \centering           \includegraphics[width=\textwidth, trim=1cm 0.5cm 1cm 1cm,clip]{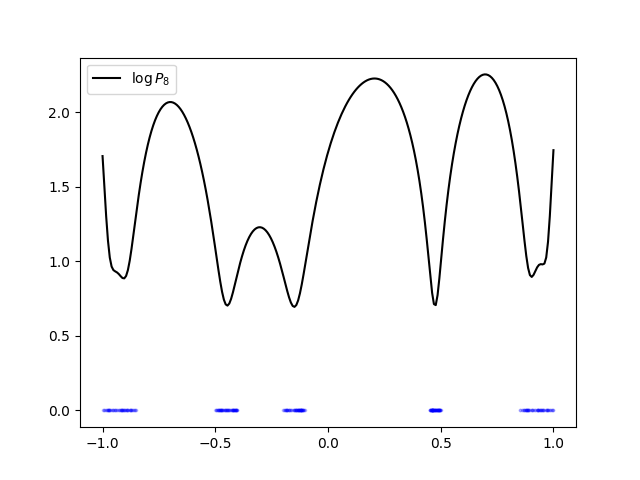}
        % \caption{$\log$-Christoffel polynomial (degree 8)}
    \end{subfigure}
        \begin{subfigure}{0.42\textwidth}
        \centering           \includegraphics[width=\textwidth,trim=2.5cm 0.5cm 3cm 1cm,clip]{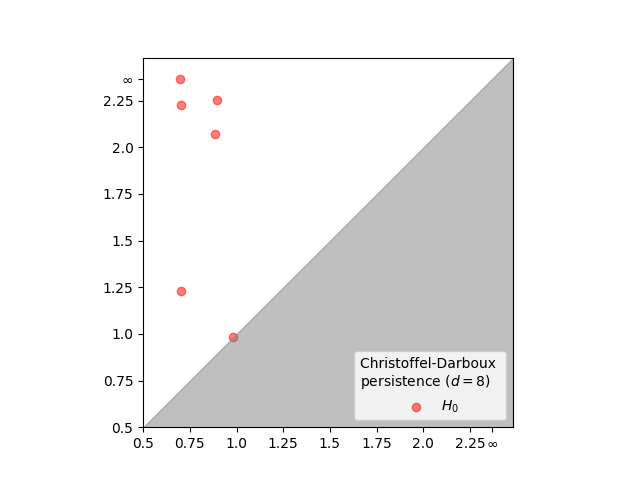}
        % \caption{Persistence diagram of $\persmod{8}{\samples}$}
    \end{subfigure}
    \begin{subfigure}{0.56\linewidth}
        \centering           \includegraphics[width=\textwidth, trim=1cm 0.5cm 1cm 1cm,clip]{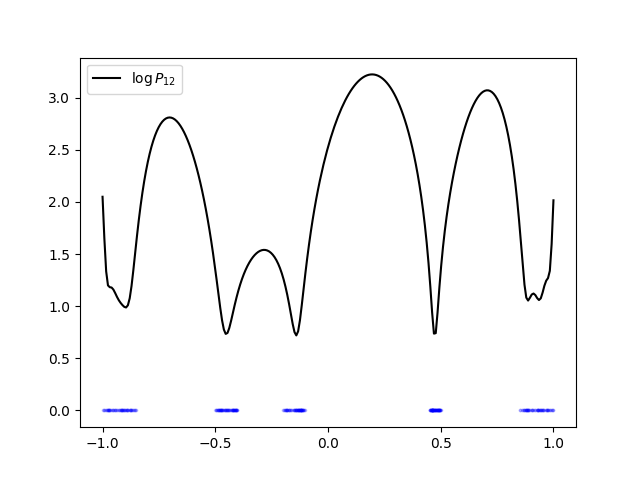}
        % \caption{$\log$-Christoffel polynomial (degree 12)}
    \end{subfigure}
    \begin{subfigure}{0.42\linewidth}
        \centering
         \includegraphics[width=\textwidth,trim=2.5cm 0.5cm 3cm 1cm,clip]{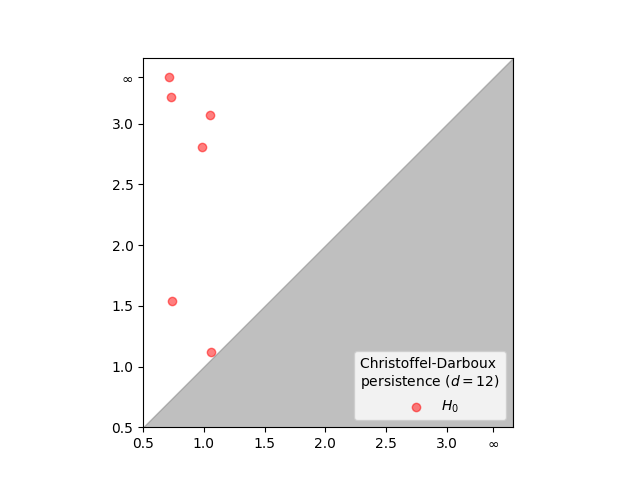}
        % \caption{Persistence diagram of $\persmod{12}{\samples}$}
    \end{subfigure}
    \caption{Left: Christoffel polynomials  ($d=4, 8 , 16$) for samples $\samples \subseteq [-1, 1]$ drawn from a measure supported on five intervals in $[-1, 1]$ (in blue, $N=500$). Right: diagrams of $\persmod{d}{\samples}$.}
    \label{FIG:1Dexample}
\end{figure}

\begin{figure}[p]
    \centering
    \begin{subfigure}{0.38\textwidth}
        \centering
    \includegraphics[width=\textwidth,trim=0.85cm 0.85cm 0.85cm 0.85cm,clip]{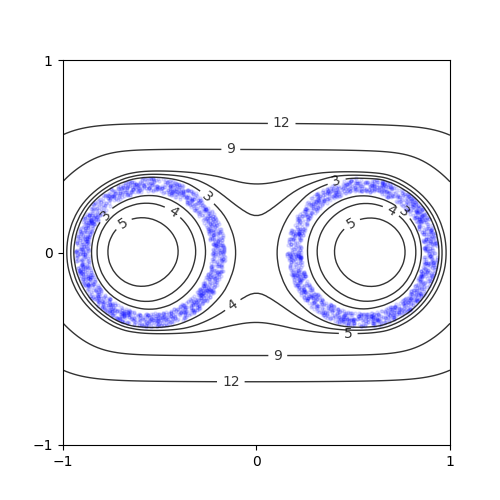}
        % \caption{Sample set $\samples_1$: Disjoint circles}
    \end{subfigure}
    \begin{subfigure}{0.38\linewidth}
        \centering
         \includegraphics[width=\textwidth,trim=2.5cm 0.5cm 3cm 1cm,clip]{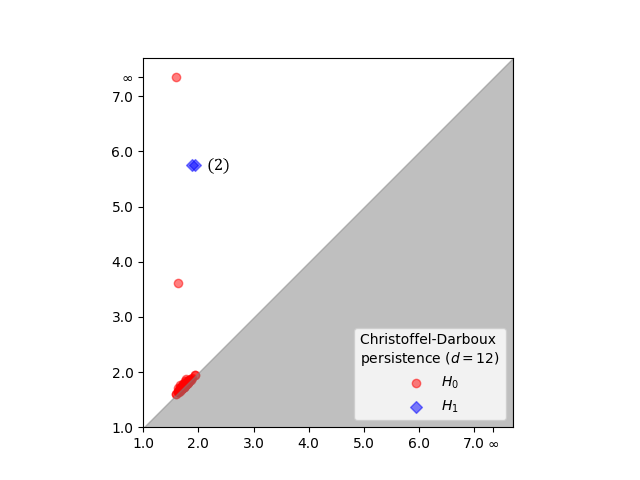}
        % \caption{Diagram of $\persmod{12}{\samples_1}$}
    \end{subfigure}
    \begin{subfigure}{0.38\linewidth}
        \centering \includegraphics[width=\textwidth,trim=0.85cm 0.85cm 0.85cm 0.85cm,clip]{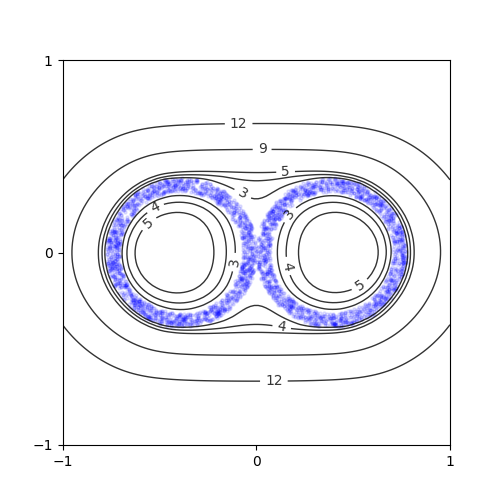}
        % \caption{Sample set $\samples_2$: Touching circles}
    \end{subfigure}
    \begin{subfigure}{0.38\textwidth}
        \centering           \includegraphics[width=\textwidth,trim=2.5cm 0.5cm 3cm 1cm,clip]{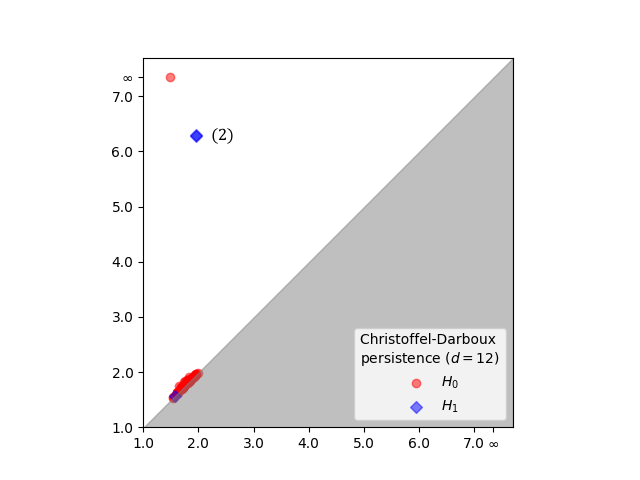}
        % \caption{Diagram of $\persmod{12}{\samples_2}$}
    \end{subfigure}
    \begin{subfigure}{0.38\linewidth}
        \centering           
        \includegraphics[width=\textwidth,trim=0.85cm 0.85cm 0.85cm 0.85cm,clip]{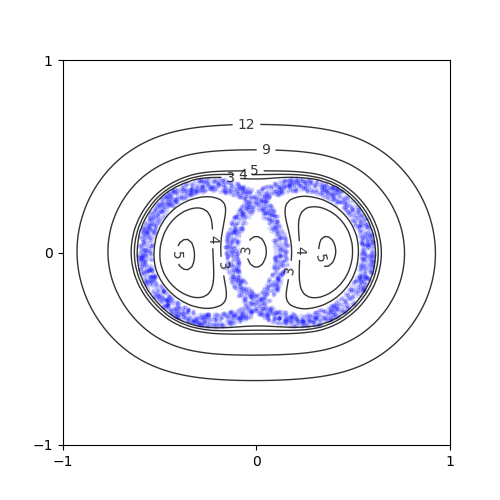}
        % \caption{Sample set $\samples_3$: Overlapping circles}
    \end{subfigure}
    \begin{subfigure}{0.38\linewidth}
        \centering
         \includegraphics[width=\textwidth,trim=2.5cm 0.5cm 3cm 1cm,clip]{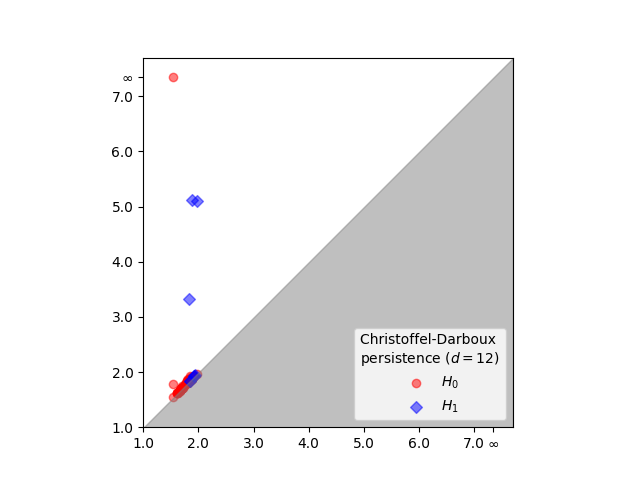}
        % \caption{Diagram of $\persmod{12}{\samples_3}$}
    \end{subfigure}
    \caption{Left: level sets (in black) of the Christoffel polynomial $\CDpol{\mu_{\samples}}{12}$ of degree $12$  for three sets of samples $\samples \subseteq [-1, 1]^n$ (in blue, $N = 3000$). Right: the corresponding diagrams of $\persmod{12}{\samples}$.}
    \label{FIG:figureeight}
\end{figure}
\begin{figure}[p]
    \centering
    \begin{subfigure}{0.25\textwidth}
    \centering
    \includegraphics[width=\textwidth,trim=0.85cm 0.85cm 0.85cm 0.85cm,clip]{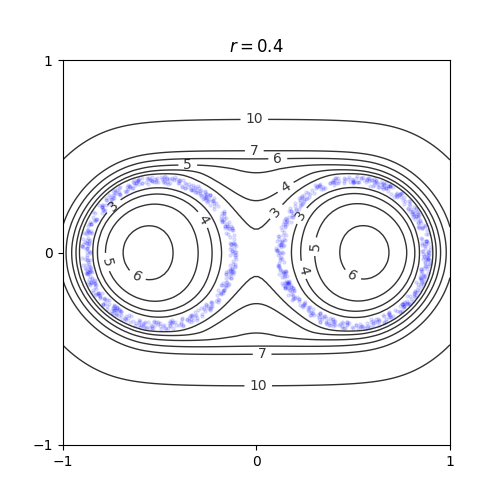}
    \end{subfigure}
    \begin{subfigure}{0.25\textwidth}
    \centering
    \includegraphics[width=\textwidth,trim=0.85cm 0.85cm 0.85cm 0.85cm,clip]{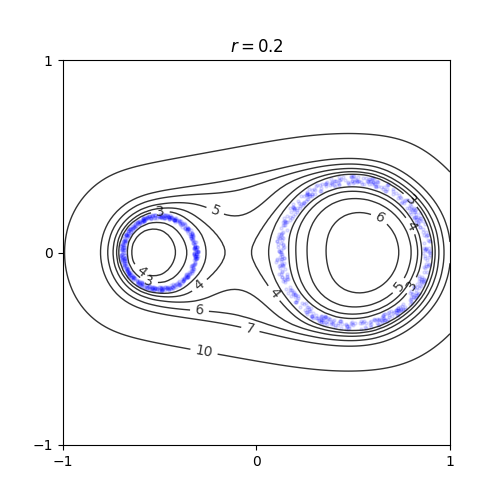}
    \end{subfigure}
    \begin{subfigure}{0.25\textwidth}
    \centering
    \includegraphics[width=\textwidth,trim=0.85cm 0.85cm 0.85cm 0.85cm,clip]{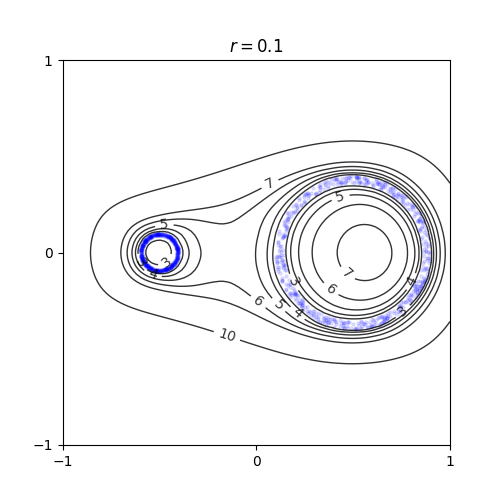}
    \end{subfigure}
    \begin{subfigure}{0.25\textwidth}
    \centering
    \includegraphics[width=\textwidth,trim=2.5cm 0.5cm 3cm 1cm,clip]{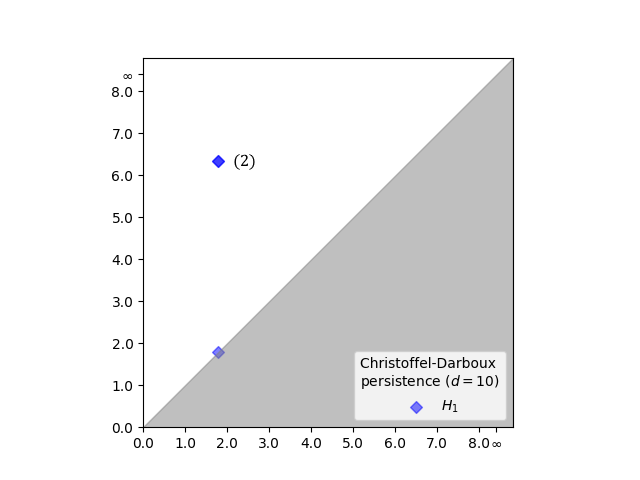}
    \end{subfigure}
        \begin{subfigure}{0.25\textwidth}
    \centering
    \includegraphics[width=\textwidth,trim=2.5cm 0.5cm 3cm 1cm,clip]{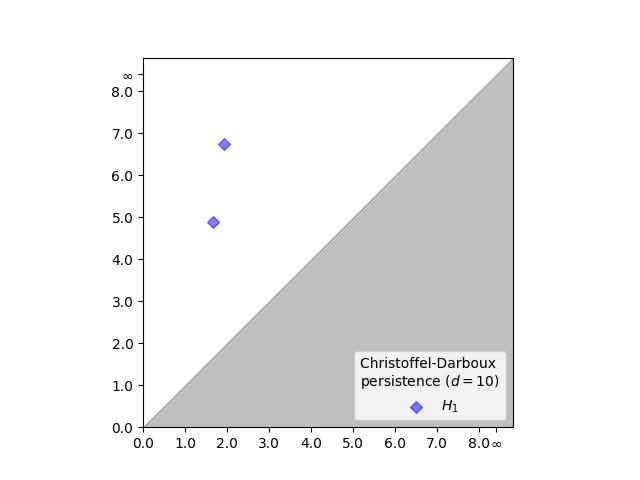}
    \end{subfigure}
        \begin{subfigure}{0.25\textwidth}
    \centering
    \includegraphics[width=\textwidth,trim=2.5cm 0.5cm 3cm 1cm,clip]{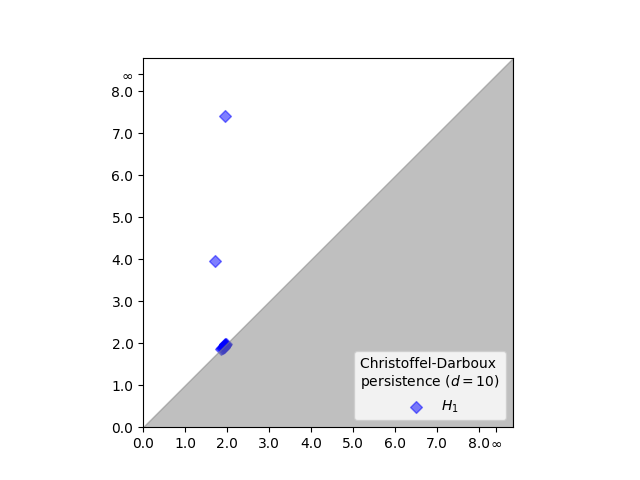}
    \end{subfigure}
    \caption{Level sets of $\log \CDpol{\mu_{\samples}}{10}$and diagrams of $\persmod{10}{\samples}$ for samples $\samples$ drawn from a measure supported on two circles. The radius $r$ of the left circle decreases. Only degree $1$ homology is shown.}
    \label{FIG:smallcircles}
\end{figure}
\begin{figure}[p]
    \centering
    \begin{subfigure}{0.25\textwidth}
    \centering
    \includegraphics[width=\textwidth,trim=0.85cm 0.85cm 0.85cm 0.85cm,clip]{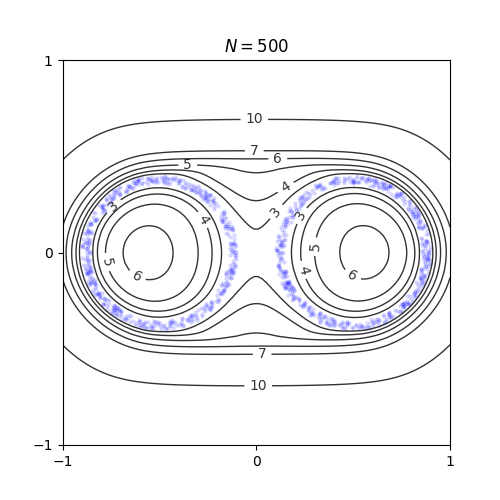}
    \end{subfigure}
    \begin{subfigure}{0.25\textwidth}
    \centering
    \includegraphics[width=\textwidth,trim=0.85cm 0.85cm 0.85cm 0.85cm,clip]{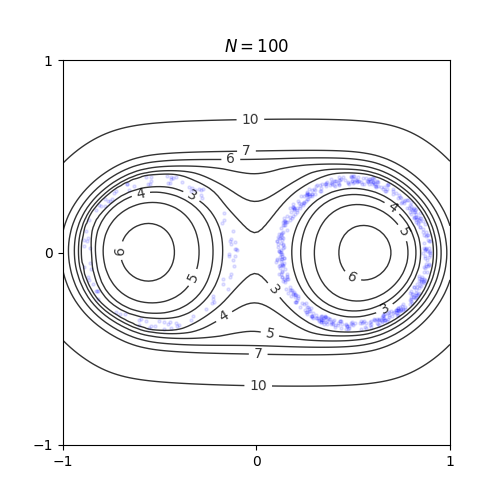}
    \end{subfigure}
    \begin{subfigure}{0.25\textwidth}
    \centering
    \includegraphics[width=\textwidth,trim=0.85cm 0.85cm 0.85cm 0.85cm,clip]{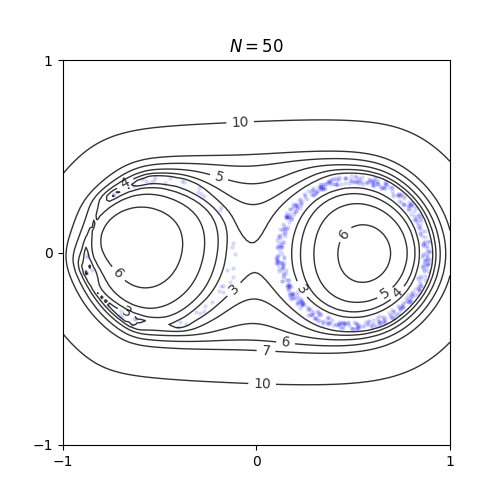}
    \end{subfigure}
    \begin{subfigure}{0.25\textwidth}
    \centering
    \includegraphics[width=\textwidth,trim=2.5cm 0.5cm 3cm 1cm,clip]{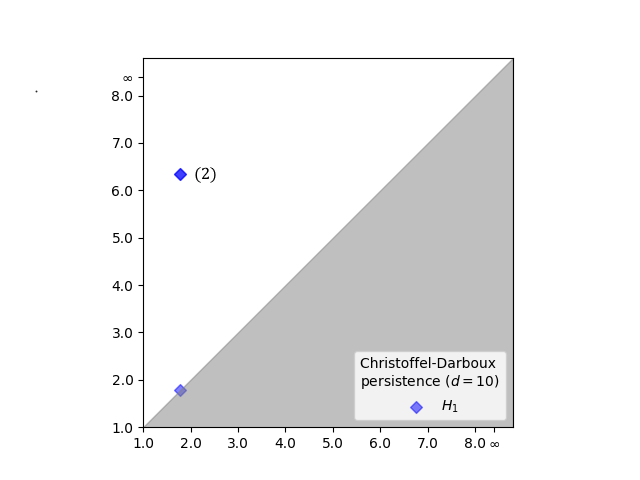}
    \end{subfigure}
        \begin{subfigure}{0.25\textwidth}
    \centering
    \includegraphics[width=\textwidth,trim=2.5cm 0.5cm 3cm 1cm,clip]{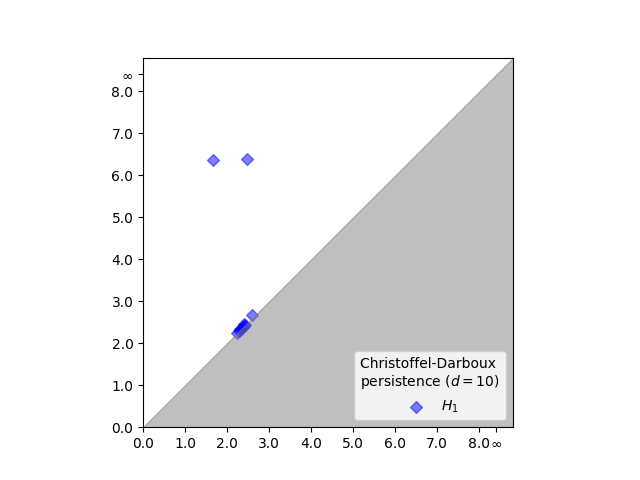}
    \end{subfigure}
        \begin{subfigure}{0.25\textwidth}
    \centering
    \includegraphics[width=\textwidth,trim=2.5cm 0.5cm 3cm 1cm,clip]{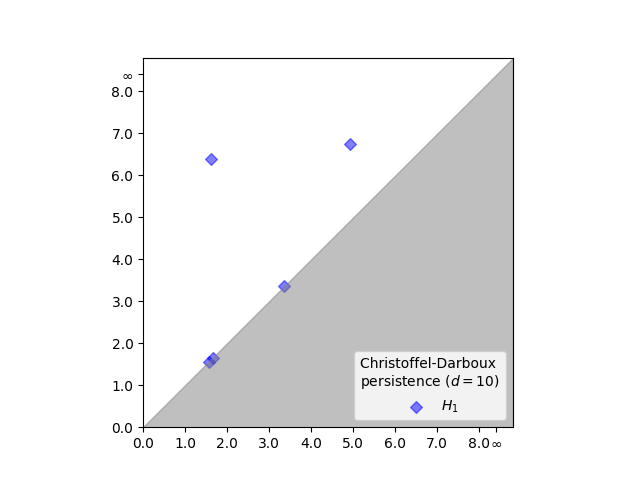}
    \end{subfigure}
    \caption{Level sets of $\log \CDpol{\mu_{\samples}}{10}$and diagrams of $\persmod{10}{\samples}$ for samples $\samples$ drawn from a measure supported on two circles. The number of samples $N$ drawn from the left circle decreases.
    Only degree $1$ homology is shown.}
    \label{FIG:sparsecircles}
\end{figure}

\subsection*{Comparison to the \v{C}ech module.}
We compare our new module $\persmod{12}{\samples}$ to \v{C}ech persistence in Figure~\ref{FIG:fourballs}, where $\samples$ is drawn from a measure supported on a ball, a triangle, and a square ($N=10000$). 
We consider four cases: a pure sample $\samples$, two samples with uniform noise ($M=50$ and $M=2500$) in $[-1,1]^n$, and a sample  with Gaussian noise ($\sigma = 0.03$). We observe that both the Christoffel-Darboux and \v{C}ech module are able to correctly capture the underlying topology for the pure sample and for the sample with Gaussian noise. However, only the  Christoffel-Darboux module is able to do so for the samples with uniform noise, whereas the \v{C}ech module recovers no meaningful information there.

\begin{figure}
    \centering
    \begin{subfigure}{0.32\textwidth}
    \centering
    \includegraphics[width=\textwidth,trim=0.85cm 0.85cm 0.85cm 0.85cm,clip]{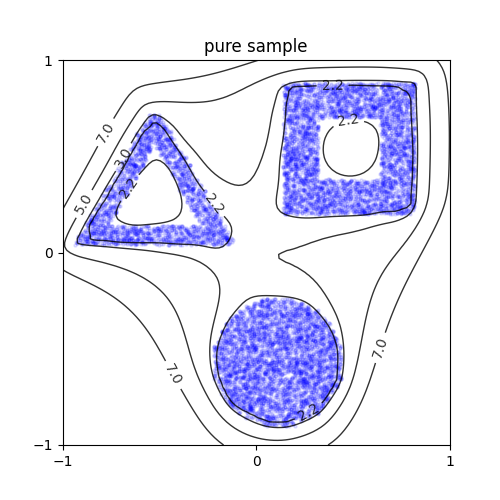}
    % \caption{Sample set $\samples$.}
    \end{subfigure}
    \begin{subfigure}{0.32\textwidth}
    \centering
    \includegraphics[width=\textwidth,trim=2.5cm 0.5cm 3cm 1cm,clip]{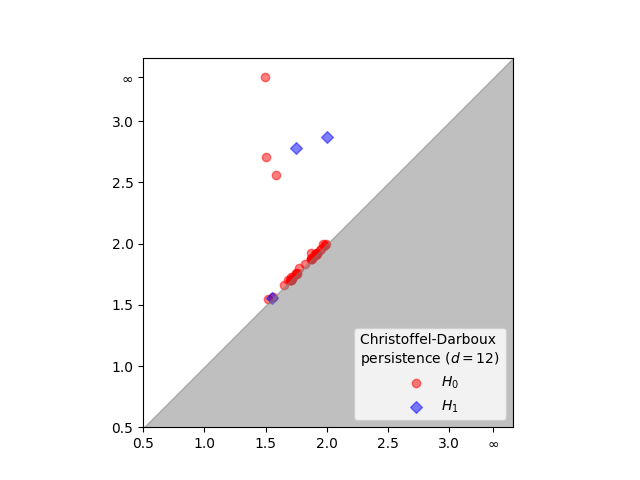}
    % \caption{Diagram of $\persmod{12}{\samples}$}
    \end{subfigure}
    \begin{subfigure}{0.32\textwidth}
    \centering
    \includegraphics[width=\textwidth,trim=2.5cm 0.5cm 3cm 1cm,clip]{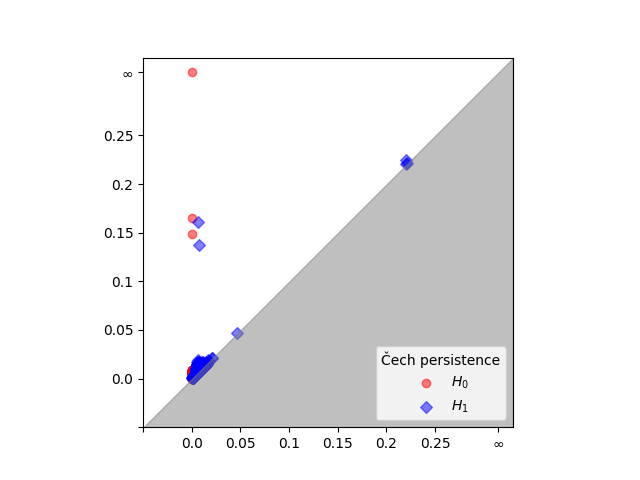}
    % \caption{Diagram of \v{C}ech persistence.}
    \end{subfigure}
    \centering
    \begin{subfigure}{0.32\textwidth}
    \centering
    \includegraphics[width=\textwidth,trim=0.85cm 0.85cm 0.85cm 0.85cm,clip]{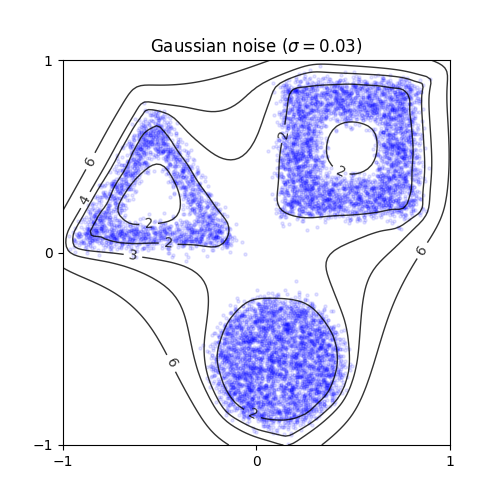}
    % \caption{Uniform noise ($M=50$)}
    \end{subfigure}
    \begin{subfigure}{0.32\textwidth}
    \centering
    \includegraphics[width=\textwidth,trim=2.5cm 0.5cm 3cm 1cm,clip]{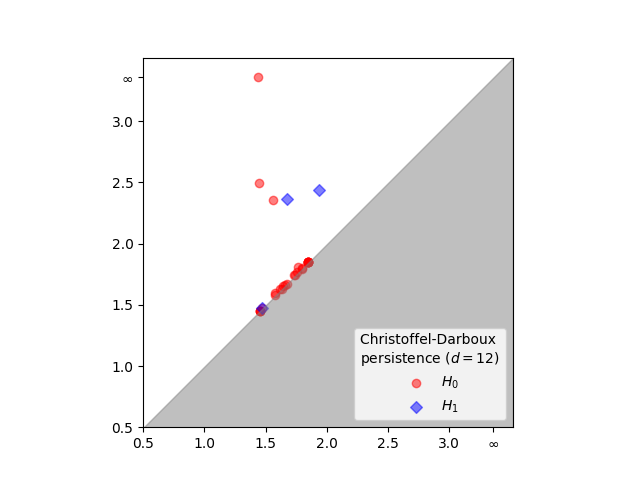}
    % \caption{Diagram of $\persmod{12}{\samples}$}
    \end{subfigure}
    \begin{subfigure}{0.32\textwidth}
    \centering
    \includegraphics[width=\textwidth,trim=2.5cm 0.5cm 3cm 1cm,clip]{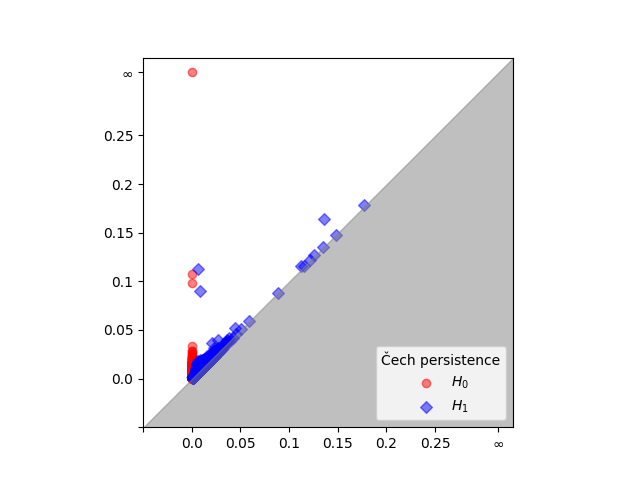}
    % \caption{Diagram of \v{C}ech persistence}
    \end{subfigure}
    \begin{subfigure}{0.32\textwidth}
    \centering
    \includegraphics[width=\textwidth,trim=0.85cm 0.85cm 0.85cm 0.85cm,clip]{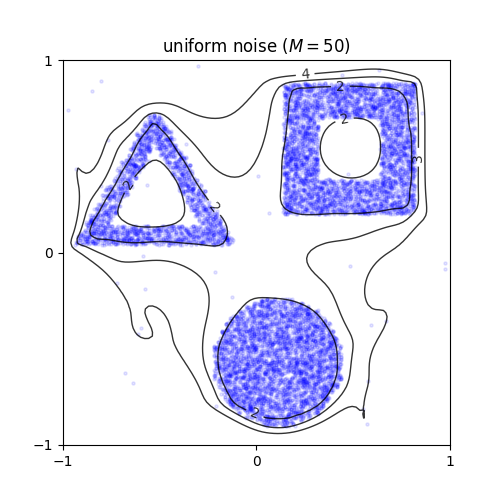}
    % \caption{Uniform noise ($M=2500$)}
    \end{subfigure}
    \begin{subfigure}{0.32\textwidth}
    \centering
    \includegraphics[width=\textwidth,trim=2.5cm 0.5cm 3cm 1cm,clip]{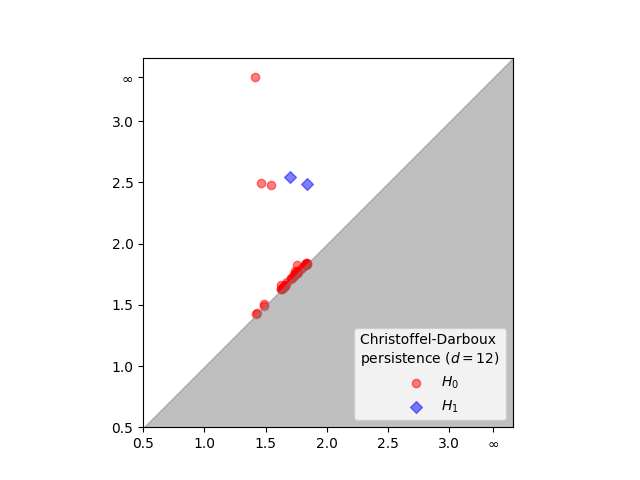}
    % \caption{Diagram of $\persmod{12}{\tilde \samples}$}
    \end{subfigure}
    \begin{subfigure}{0.32\textwidth}
    \centering
    \includegraphics[width=\textwidth,trim=2.5cm 0.5cm 3cm 1cm,clip]{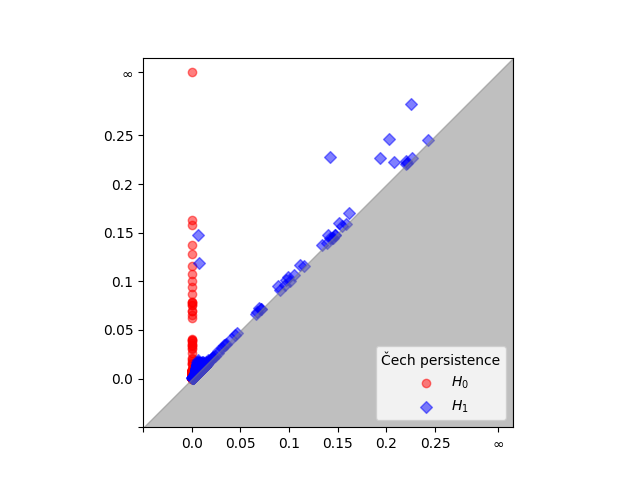}
    % \caption{Diagram of \v{C}ech persistence}
    \end{subfigure}
    \begin{subfigure}{0.32\textwidth}
    \centering
    \includegraphics[width=\textwidth,trim=0.85cm 0.85cm 0.85cm 0.85cm,clip]{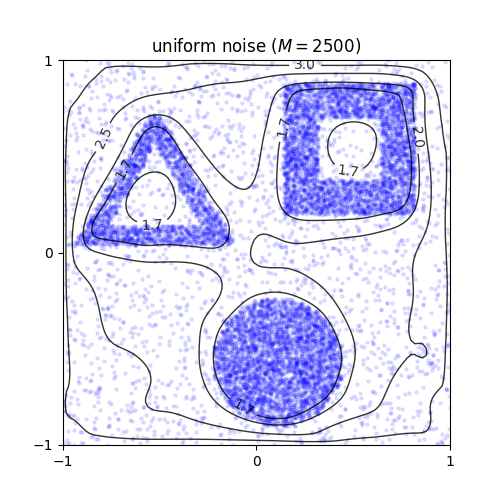}
    % \caption{Gaussian noise ($\sigma=0.03$)}
    \end{subfigure}
    \begin{subfigure}{0.32\textwidth}
    \centering
    \includegraphics[width=\textwidth,trim=2.5cm 0.5cm 3cm 1cm,clip]{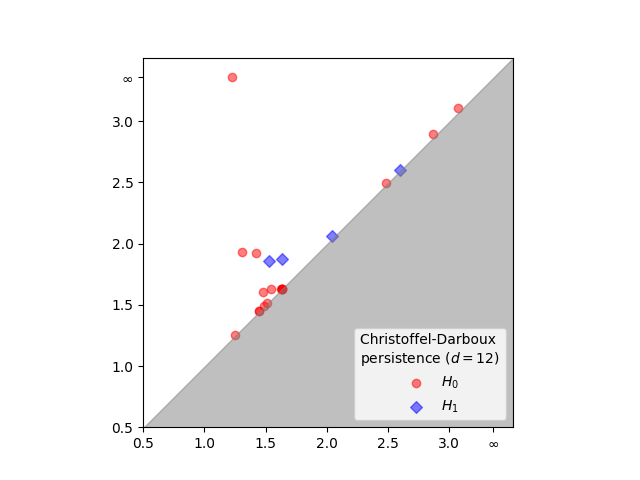}
    % \caption{Diagram of $\persmod{12}{\tilde \samples}$}
    \end{subfigure}
    \begin{subfigure}{0.32\textwidth}
    \centering
    \includegraphics[width=\textwidth,trim=2.5cm 0.5cm 3cm 1cm,clip]{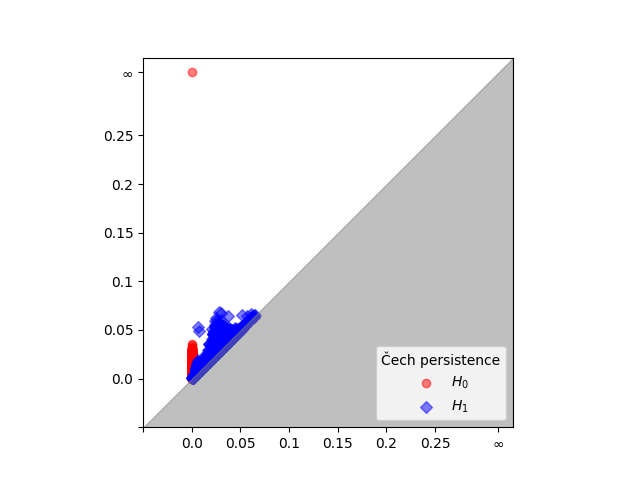}
    % \caption{Diagram of \v{C}ech persistence}
    \end{subfigure}
    \caption{Left: level sets of $\log \CDpol{\mu_{\samples}}{12}$ for sample sets $\samples$ in $[-1, 1]^2$ with different types of noise. Center: persistence diagrams of $\persmod{12}{\samples}$. Right: persistence diagrams for the \v{C}ech filtration.}
    \label{FIG:fourballs}
\end{figure}

\subsection*{Stability under uniform noise.}
We consider a set $\samples$ consisting of evenly spaced points on the $1$-skeleton of a cube\footnote{Because the cube skeleton is degenerate, we add a small amount of Gaussian noise ($\sigma = 0.025$) to $\samples$
to ensure the Christoffel polynomial for $\mu_{\samples}$ is well-defined.
} in $\R^3$ (50 points per edge) with edge-length $1.5$. 
The significant persistent features of~$\samples$ consist of a single interval (of infinite length) in $H_0$; five intervals in $H_1$ and one interval in $H_2$, see Figure~\ref{FIG:cubeskel}. We investigate how well the features in $H_1$ can be recovered after adding an increasing amount of uniform noise to $\samples$, comparing \v{C}ech persistence to $\persmod{d}{\samples}$, $d=6, 8, 10$. 
To measure this, we follow~\cite{Buchet2016} and use the \emph{signal-to-noise ratio}; meaning the ratio between the size of the smallest interval in $H_1$ inherent to~$\samples$ and the size of the largest interval (in $H_1$) induced by the noise. 
In Table~\ref{TAB:signal-to-noise}, we report the median signal-to-noise ratios over 100 experiments. We reiterate that the Christoffel-Darboux persistence is computed \emph{approximately}, which could affect these results. See Appendix~\ref{APP: approximation} for a more detailed discussion.
We observe that the ratios for the Christoffel-Darboux modules are much better than those for the \v{C}ech module. Furthermore, we note that the module of degree $d=6$ outperforms the modules of degree $d=8$ and $d=10$. This is consistent with our theoretical stability results in Section~\ref{SEC:stability}, which are stronger for small values of $d$.

\begin{table}[h]
    \rowcolors{2}{gray!25}{white}
    \centering
    \begin{tabular}{l|ccccccc}
    \rowcolor{white}
        \textbf{uniform noise} ($M$) & \textbf{baseline} & \textbf{25} & \textbf{50} & \textbf{100} & \textbf{250} & \textbf{500} & \textbf{1000} \\
      \hline
        \v{C}ech & $\gg 10$ & 3.6 & 2.4 & 2.0 & 1.6 & 1.4 & 1.3  \\
        Christoffel-Darboux ($d=6$) & $\gg 10$ & 7.8 & 5.7 & 5.3 & 3.7 & 2.3 & 1.2 \\
        Christoffel-Darboux ($d=8$) & $\gg 10$ & 4.9 & 2.8 & 2.6 & 2.5 & 2.1 & 1.2 \\
        Christoffel-Darboux ($d=10$) & $\gg 10$ & 8.8 & 4.0 & 2.3 & 1.9 & 1.8 & 1.3
    \end{tabular}
    \caption{Signal-to-noise ratios for persistent homology in dimension $1$ for the cube skeleton in the presence of uniform noise (median values over 100 experiments). See also Figure~\ref{FIG:cubeskel}.}
    \label{TAB:signal-to-noise}
\end{table}

\newpage
\begin{figure}
    \centering
    \begin{subfigure}{0.32\textwidth}
    \centering
    \includegraphics[width=\textwidth,trim=1.8cm 1.2cm 0.7cm 1cm,clip]{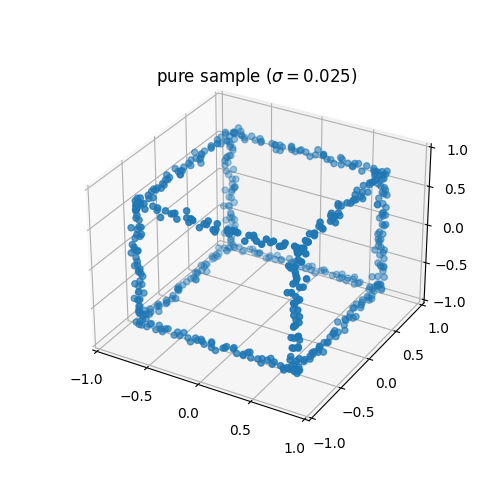}
        % \caption{Gaussian noise ($\sigma = 0.025$)}
    \end{subfigure}
    \begin{subfigure}{0.32\linewidth}
        \centering
         \includegraphics[width=\textwidth,trim=2.5cm 0.5cm 3cm 1cm,clip]{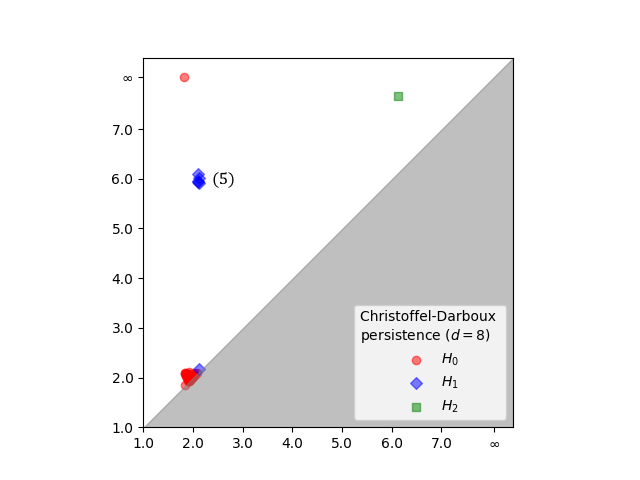}
        % \caption{Diagram of $\persmod{10}{\samples}$}
    \end{subfigure}
    \begin{subfigure}{0.32\linewidth}
        \centering
         \includegraphics[width=\textwidth,trim=2.5cm 0.5cm 3cm 1cm,clip]{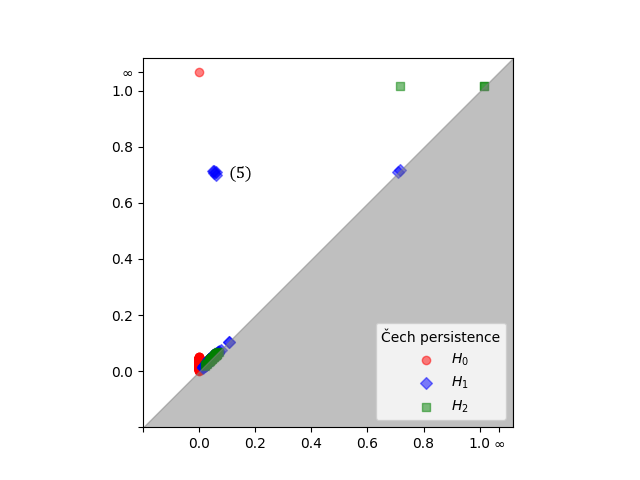}
        % \caption{Diagram of $\persmod{10}{\samples}$}
    \end{subfigure}
        \begin{subfigure}{0.32\textwidth}
    \centering
    \includegraphics[width=\textwidth,trim=1.8cm 1.2cm 0.7cm 1cm,clip]{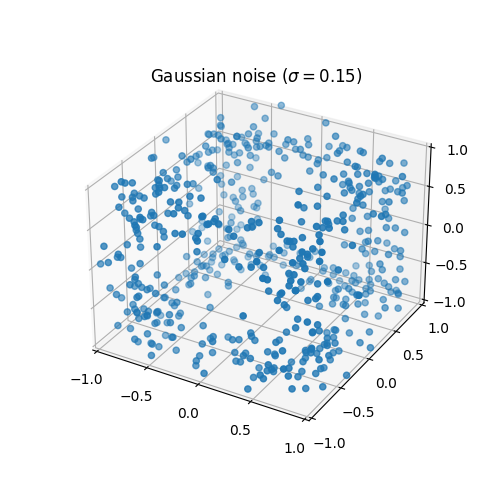}
        % \caption{Gaussian noise ($\sigma = 0.025$)}
    \end{subfigure}
    \begin{subfigure}{0.32\linewidth}
        \centering
         \includegraphics[width=\textwidth,trim=2.5cm 0.5cm 3cm 1cm,clip]{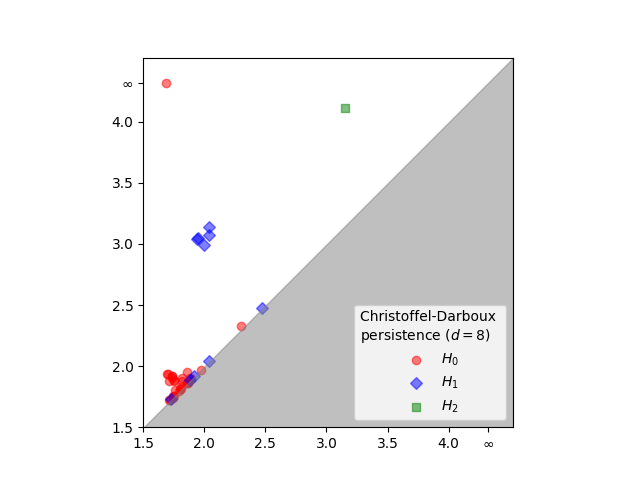}
        % \caption{Diagram of $\persmod{10}{\samples}$}
    \end{subfigure}
    \begin{subfigure}{0.32\linewidth}
        \centering
         \includegraphics[width=\textwidth,trim=2.5cm 0.5cm 3cm 1cm,clip]{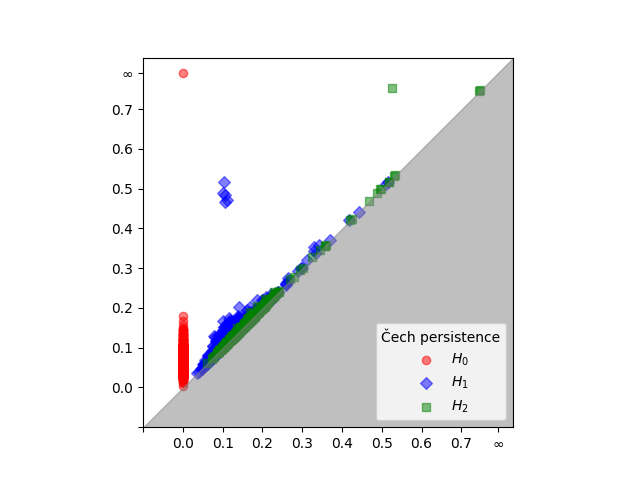}
        % \caption{Diagram of $\persmod{10}{\samples}$}
    \end{subfigure}
        \begin{subfigure}{0.32\textwidth}
    \centering
    \includegraphics[width=\textwidth,trim=1.8cm 1.2cm 0.7cm 1cm,clip]{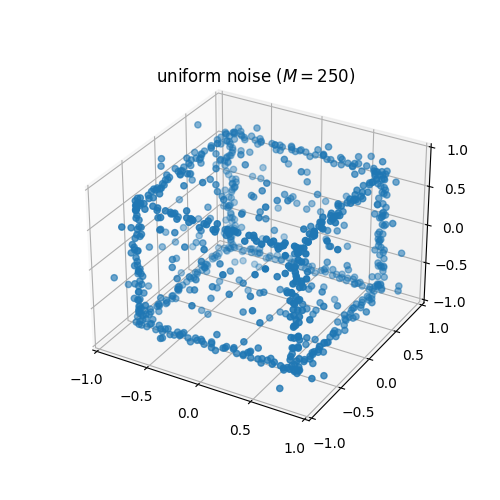}
        % \caption{Gaussian noise ($\sigma = 0.025$)}
    \end{subfigure}
    \begin{subfigure}{0.32\linewidth}
        \centering
         \includegraphics[width=\textwidth,trim=2.5cm 0.5cm 3cm 1cm,clip]{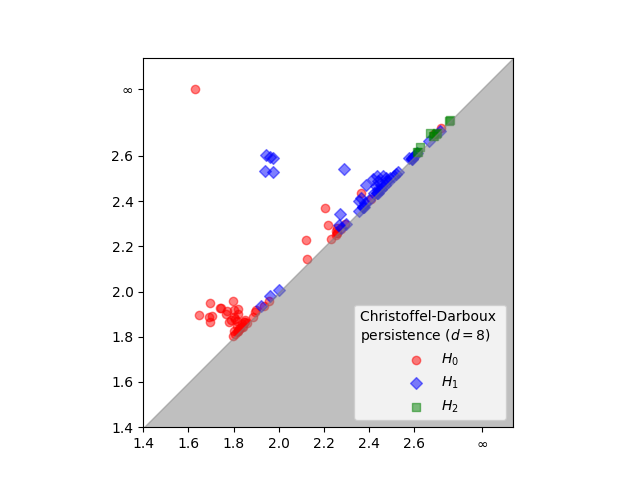}
        % \caption{Diagram of $\persmod{10}{\samples}$}
    \end{subfigure}
    \begin{subfigure}{0.32\linewidth}
        \centering
         \includegraphics[width=\textwidth,trim=2.5cm 0.5cm 3cm 1cm,clip]{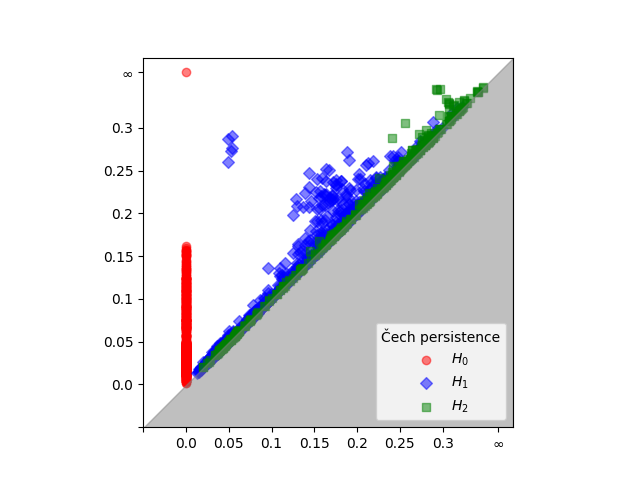}
        % \caption{Diagram of $\persmod{10}{\samples}$}
    \end{subfigure}
    \caption{
    Left: sample sets obtained from the $1$-skeleton of a cube in $[-1, 1]^3$ by adding Gaussian and uniform noise, respectively. Right: the corresponding \v{C}ech and Christoffel-Darboux persistence.}
    \label{FIG:cubeskel}
\end{figure}
\clearpage
\section{Discussion}
\label{SEC:discussion}
    We have introduced a new scheme for computing persistent homology of a point cloud in~$\R^n$, based on the theory of Christoffel-Darboux kernels. Our scheme is stable w.r.t. the Wasserstein distance. It admits an exact algorithm whose runtime is linear in the number of samples, but depends rather heavily on the ambient dimension $n$ and the degree $d$ of the kernel. In several examples ($n=1, 2, 3$), it was able to capture key topological features of the point cloud, even in the presence of uniform noise.
    
    \subsection*{Computing the persistent homology.}
    The persistence module $\persmod{d}{\samples}$ arises from a particularly simple filtration of a semialgebraic set by a polynomial. This is what allows us to invoke the result of Basu \& Karisani in Section~\ref{SEC:algorithm} to compute its persistence diagram. Their result in fact applies in a much more general setting, but the resulting algorithm is not very practical. Our present work motivates the search for \emph{effective} exact algorithms in simple special cases. Alternatively, it would be desirable to find theoretical guarantees that match the good practical performance of the approximation scheme described in Section~\ref{SEC:approximation}.
    
    \subsection*{Regularization.}
    Our stability results of Section~\ref{SEC:stability} depend on the `algebraic degeneracy' of the sample set $\samples$. Such dependence is undesirable, and not present in stability results for most conventional persistence modules. This dependence can potentially be avoided by considering a \emph{regularization} of the Christoffel polynomial, obtained by adding a small multiple of the identity to the moment matrix~\eqref{EQ:moment_matrix}:
        $M \gets M + \varepsilon \cdot \mathrm{Id}$.
    One can also think of this as adding a small multiple of the uniform measure on $[-1, 1]^n$ to the empirical measure~$\mu_\samples$, which ensures that the corresponding inner product is definite.
    % In particular, this allows one to work in the setting where $\samples$ is (near-)degenerate, i.e., where $M$ is (almost) singular. 
    In~\cite{Marxetal2021}, the authors already studied the 
    impact of such modifications in the setting of functional approximation. There, it allows them to work over (near-)degenerate sets $\samples$, while still accurately capturing the geometry of their problem.
    Preliminary experiments show this approach can be applied in our setting as well, and it would be very interesting to explore this further.
    
    \subsection*{Selecting the degree $d$.}
    Another important consideration is the selection of the degree $d$ of the Christoffel polynomial $\CDpol{\mu_{\samples}}{d}$. On the one hand, Proposition~\ref{EQ:christpol} and Theorem~\ref{THM:CDconvergence} suggest that the polynomial captures the support of the underlying measure $\mu$ better when $d$ is large. On the other hand, Proposition~\ref{PROP:logstab} and Table~\ref{TAB:signal-to-noise} suggest that $\persmod{d}{\samples}$ is more stable under perturbations of the data $\samples$ for smaller $d$.
    Furthermore, computing $\CDpol{\mu_{\samples}}{d}$ rapidly becomes more costly as $d$ grows.
    
\bibliographystyle{plainurl}
\bibliography{CDbib}
    
    \newpage
    \appendix
    \section{Proofs of technical statements}
    \label{APP:technical}
\begin{lemma}[Generalized Markov Brothers' inequality~\cite{Skylaga}, see also Theorem 1 in~\cite{HarrisMarkov}] \label{LEM:Markov}
Let $f \in \R[\x]$ be a polynomial of degree~$d$. Then for all $\x, \y \in [-1, 1]^n$, we have:
\[
    |f(\x) - f(\y)| \leq d^2 \cdot  \|f\|_\infty \cdot \| \x - \y \|_2.
\]
That is, $f$ is Lipschitz continuous on $[-1,1]^n$ with constant $L_f = d^2 \cdot \| f\|_\infty$.
\end{lemma}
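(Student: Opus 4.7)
My plan is to reduce the multivariate statement to the classical Markov brothers' inequality for one-variable polynomials on $[-1, 1]$, which asserts $\|p'\|_{L^\infty[-1,1]} \leq d^2 \|p\|_{L^\infty[-1,1]}$ for any polynomial $p$ of degree $\leq d$.

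First, I would restrict $f$ along the segment joining $\x$ and $\y$. Writing $v = (\y - \x)/\|\y - \x\|_2$ and $g(t) := f(\x + tv)$, the statement reduces to bounding $|g(\|\y - \x\|_2) - g(0)|$, where $g$ is a univariate polynomial of degree $\leq d$ satisfying $|g(t)| \leq \|f\|_\infty$ on the interval $I := \{t \in \R : \x + tv \in [-1, 1]^n\}$. By the fundamental theorem of calculus, it would suffice to show that $|g'(t)| \leq d^2 \|f\|_\infty$ for all $t \in I$; equivalently, $|\partial_v f(\z)| \leq d^2 \|f\|_\infty$ for every $\z \in [-1, 1]^n$ and every unit vector~$v$. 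The Lipschitz bound then follows from Cauchy--Schwarz applied to the identity $f(\y) - f(\x) = \int_0^1 \nabla f((1-t)\x + t\y) \cdot (\y - \x) \, dt$.

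The main obstacle is precisely this uniform directional derivative bound. A naive rescaling of the one-variable Markov inequality on the interval $I = [\alpha, \beta]$ gives only $|g'(t)| \leq \frac{2d^2}{\beta - \alpha} \|f\|_\infty$, and the chord length $\beta - \alpha$ can be arbitrarily small when $\z$ lies near the boundary of $[-1, 1]^n$ and $v$ is generic (for instance, $\z = (0.9, 0.9)$, $v = (1, -1)/\sqrt{2}$), so this approach alone is insufficient. A coordinate-wise decomposition $\nabla f = \sum_i (\partial_i f) e_i$ combined with the one-variable Markov inequality along each axis yields $|\partial_i f(\z)| \leq d^2 \|f\|_\infty$, but summing gives only $\|\nabla f\|_2 \leq \sqrt{n} \cdot d^2 \|f\|_\infty$, with an unwanted factor of $\sqrt{n}$.

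To obtain the sharp constant $d^2$, I would invoke the multivariate Markov-type inequality for polynomials on a convex body $K$, which states $\|\nabla f\|_{L^\infty(K)} \leq \frac{2 d^2}{w(K)} \|f\|_{L^\infty(K)}$, where $w(K)$ denotes the minimum width of~$K$. For $K = [-1, 1]^n$ the minimum width equals $2$ (attained in the coordinate directions, since the width of $[-1,1]^n$ in direction $v$ equals $2\|v\|_1 \geq 2\|v\|_2$ for unit $v$), yielding $\|\nabla f\|_\infty \leq d^2 \|f\|_\infty$ and completing the proof. This multivariate inequality is exactly Theorem~1 in~\cite{HarrisMarkov}, from which the claim follows directly; the heart of the matter is thus pushed into that reference.
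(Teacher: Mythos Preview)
Your proposal is correct and aligns with the paper's treatment: the paper does not give a proof of this lemma at all, but simply cites \cite{Skylaga} and Theorem~1 of \cite{HarrisMarkov}. Your write-up is a faithful unpacking of that citation---you correctly identify the gradient bound $\|\nabla f\|_\infty \leq d^2\|f\|_\infty$ as the key step, correctly compute the minimum width of $[-1,1]^n$ as~$2$, and then defer to the same reference for the convex-body Markov inequality $\|\nabla f\|_{L^\infty(K)} \leq \tfrac{2d^2}{w(K)}\|f\|_{L^\infty(K)}$.
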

\begin{lemma}[Technical property of Wasserstein distance] \label{LEM:cWasserstein}
Let $\eta > 0$ and let $c : \R^n \rightarrow \R$ be any function satisfying:
\[
    |c(\x) - c(\y)| \leq \eta \cdot \|\x - \y\|_2 \quad (\x \in \samples,~\y \in \sampless).
\]
Then we have:
\[
    |\sum_{\x \in \samples} c(\x) \mu_\samples(\x) - \sum_{\y \in \sampless} c(\y) \mu_{\sampless}(\y)| \leq \eta \cdot d_W(\mu_\samples, \mu_\sampless).
\]
\end{lemma}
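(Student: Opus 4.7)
The plan is to use an optimal transport plan $\gamma$ achieving the minimum in the linear program defining $d_W(\mu_\samples, \mu_\sampless)$, and rewrite both marginal sums in terms of $\gamma$. Specifically, by the marginal constraint $\sum_{\y \in \sampless} \gamma(\x, \y) = \mu_\samples(\x)$, we can express
\[
    \sum_{\x \in \samples} c(\x) \mu_\samples(\x) = \sum_{\x \in \samples} c(\x) \sum_{\y \in \sampless} \gamma(\x, \y) = \sum_{\x \in \samples, \y \in \sampless} c(\x) \gamma(\x, \y),
\]
and analogously $\sum_{\y \in \sampless} c(\y) \mu_\sampless(\y) = \sum_{\x, \y} c(\y) \gamma(\x, \y)$ using the other marginal constraint. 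Subtracting these two identities collapses the difference into a single sum $\sum_{\x, \y} (c(\x) - c(\y)) \gamma(\x, \y)$.

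From there the argument is routine. I would apply the triangle inequality to pull the absolute value inside the sum (using that $\gamma \geq 0$), then invoke the hypothesis $|c(\x) - c(\y)| \leq \eta \|\x - \y\|_2$ to obtain
\[
    \Bigl| \sum_{\x, \y} (c(\x) - c(\y)) \gamma(\x, \y) \Bigr| \leq \eta \sum_{\x \in \samples, \y \in \sampless} \gamma(\x, \y) \cdot \|\x - \y\|_2.
\]
The right-hand side is exactly $\eta$ times the objective value of the transport plan $\gamma$, which by optimality equals $\eta \cdot d_W(\mu_\samples, \mu_\sampless)$, yielding the claim.

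There is no real obstacle here: the result is essentially the discrete, one-sided version of Kantorovich--Rubinstein duality, and the proof is a direct manipulation of the marginals. The only minor subtlety to flag is that the Lipschitz-type hypothesis on $c$ is stated only between pairs $(\x, \y) \in \samples \times \sampless$, but since the support of any admissible $\gamma$ lies in $\samples \times \sampless$, this is precisely the regime where the bound is applied.
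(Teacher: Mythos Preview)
Your proposal is correct and follows essentially the same approach as the paper: take an optimal transport plan $\gamma$, use the marginal constraints to rewrite both sums over $\samples \times \sampless$, apply the triangle inequality and the Lipschitz-type hypothesis, and recognize the result as $\eta$ times the optimal objective value. Your remark that the hypothesis on $c$ is only needed on $\samples \times \sampless$ (the support of $\gamma$) is a nice observation the paper leaves implicit.
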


\begin{proof}
Let $\gamma$ be the optimum solution to the program~\eqref{EQ:Wasserstein} defining the Wasserstein distance $d_W(\mu_\samples, \mu_\sampless)$. Then:
\begin{align*}
    |\sum_{\x \in \samples} c(\x) \mu_\samples(\x) - \sum_{\y \in \sampless} c(\y) d\mu_{\sampless}(\y)| 
    &= |\sum_{\x \in \samples} c(\x) \sum_{\y \in \sampless} \gamma(\x, \y) - \sum_{\y \in \sampless} c(\y) \sum_{\x \in \samples} \gamma(\x, \y)| \\
    &\leq \sum_{\x \in \samples, \y \in \sampless} |c(\x) - c(\y)| \gamma(\x, \y) \\
    &\leq \sum_{\x \in \samples, \y \in \sampless} \eta \cdot \|\x-\y\|_2 \cdot \gamma(\x, \y) \\
    &= \eta \cdot d_W(\samples, \sampless). \qedhere
\end{align*}
\end{proof}

\begin{proof}[Proof of Theorem~\ref{THM:wasstab}]
Choose an orthonormal basis $\basis = (b_\alpha)_{|\alpha| \leq d}$ for $\R[\x]_d$ w.r.t.~$\langle\cdot, \cdot \rangle_{\mu_\samples}$. We then have:
\begin{align}
    \CDpol{\mu_\samples}{d}(\x) &= \basis(\x)^\top \basis(\x),\\
    \CDpol{\mu_\sampless}{d}(\x) &= \basis(\x)^\top M^{-1} \basis(\x),
\end{align}
where $M := M^{\mu_{\sampless}}_d(\basis)$ is the moment matrix~\eqref{EQ:moment_matrix}. Thus, for any $\x \in \R^n$, we have:
\begin{equation}
    |\CDpol{\mu_{\samples}}{d}(\x) - \CDpol{\mu_{\sampless}}{d}(\x)|
    = |\basis(\x)^\top (\mathrm{Id} - M^{-1}) \basis(\x)|.
\end{equation}
Now, since $M \succ 0$ is positive definite, it has an invertible matrix square root $M^{\frac{1}{2}}$, satisfying $M = \big(M^{\frac{1}{2}}\big)^\top M^{\frac{1}{2}}$. We can therefore write:
\[
    \basis(\x)^{\top} \basis(\x) = \big(M^{-\frac{1}{2}}\basis(\x) \big)^\top M \big (M^{-\frac{1}{2}}\basis(\x) \big).
\]
We then get:
\begin{align*}
|\basis(\x)^\top (\mathrm{Id} - M^{-1}) \basis(\x)| 
&= |(M^{-\frac{1}{2}}\basis(\x))^\top (M - \mathrm{Id}) (M^{-\frac{1}{2}}\basis(\x))| \\
&\leq \|M - \mathrm{Id}\|_{\rm op} \cdot \basis(\x)^\top M^{-1} \basis(\x) = \|M - \mathrm{Id}\|_{\rm op} \cdot \CDpol{\mu_{\sampless}}{d}(\x).
\end{align*}
We thus want to bound $\|M - \mathrm{Id}\|_{\rm op}$. For this, note that the entries of $M - \mathrm{Id}$ are given by:
\[
    (M - \mathrm{Id})_{\alpha, \beta} = \sum_{\y \in \sampless} b_\alpha(\y) b_\beta(\y) \mu_\sampless(\y) - \sum_{\x \in \samples} b_\alpha(\x) b_\beta(\x) \mu_\samples(\x).
\]
Now, taking $c(\x) = b_\alpha(\x)b_\beta(\x)$ in Lemma~\ref{LEM:cWasserstein}, we find that:
\[
    |\sum_{\y \in \sampless} b_\alpha(\y) b_\beta(\y) \mu_\sampless(\y) - \sum_{\x \in \samples} b_\alpha(\x) b_\beta(\x) \mu_\samples(\x)| \leq L \cdot d_W(\mu_\samples, \mu_\sampless),
\]
whenever $L > 0$ is a (uniform) upper bound on the Lipschitz constants of the polynomials $b_{\alpha\beta}(\x) := b_\alpha(\x) b_\beta(\x)$, $|\alpha|, |\beta| \leq d$. As the operator norm of a matrix can be bounded by its maximum absolute row sum, it follows immediately that 
\[
    \|M-\mathrm{Id}\|_{\rm op} \leq s(n, d) \cdot L \cdot d_W(\mu_\samples, \mu_\sampless).
\]
It remains to find such a bound $L$. Note that each $b_{\alpha\beta}$ is a polynomial of degree~$2d$. Note further that:
\[
    \|b_{\alpha \beta}\|_{\infty} \leq \|b_{\alpha}\|_{\infty} \cdot \|b_{\beta}\|_{\infty} \leq \max_{|\gamma| \leq d} \|b_\gamma\|_{\infty}^2 = \max_{|\gamma| \leq d} \|b_\gamma^2\|_{\infty} \leq \| \CDpol{\mu_{\samples}}{d}\|_\infty.
\]
Here, we use the fact that $\CDpol{\mu_{\samples}}{d}(\x) = \sum_{|\alpha| \leq d} b_\alpha(\x)^2$.
In light of Lemma~\ref{LEM:Markov}, we may thus choose ${L = 4d^2 \cdot \|\CDpol{\mu_{\samples}}{d}\|_\infty}$. Putting things together, we conclude that:
\[
|\CDpol{\mu_{\samples}}{d}(\x) - \CDpol{\mu_{\sampless}}{d}(\x)| \leq 4 \cdot s(n, d) \cdot d^2 \cdot \|\CDpol{\mu_\samples}{d}\|_{\infty} \cdot d_W(\mu_\samples, \mu_{\sampless}) \cdot \CDpol{\mu_\sampless}{d}(\x). \qedhere
\]
\end{proof}

\begin{proof}[Proof of Proposition~\ref{PROP:logstab}]
    Let $\x \in \X$ and suppose that $\CDpol{\mu_\samples}{d}(\x) \geq \CDpol{\mu_\sampless}{d}(\x)$. By Theorem~\ref{THM:wasstab} we know that 
    \begin{align*}
        0 \leq \CDpol{\mu_{\samples}}{d}(\x) - \CDpol{\mu_{\sampless}}{d}(\x) = |\CDpol{\mu_{\samples}}{d}(\x) - \CDpol{\mu_{\sampless}}{d}(\x)| \leq C_{n, d} \cdot \|\CDpol{\mu_\samples}{d}\|_{\infty} \cdot d_W(\mu_\samples, \mu_{\sampless}) \cdot \CDpol{\mu_\sampless}{d}(\x).
    \end{align*}
    Dividing by $\CDpol{\mu_{\sampless}}{d}(\x)$ gives us 
    \begin{align*}
        0 \leq \CDpol{\mu_{\samples}}{d}(\x)/\CDpol{\mu_{\sampless}}{d}(\x) - 1 &\leq C_{n, d} \cdot \|\CDpol{\mu_\samples}{d}\|_{\infty} \cdot d_W(\mu_\samples, \mu_{\sampless}) \\
        &\leq C_{n, d} \cdot \max\{\|\CDpol{\mu_\samples}{d}\|_{\infty},\|\CDpol{\mu_\sampless}{d}\|_{\infty}\} \cdot d_W(\mu_\samples, \mu_{\sampless}) 
    \end{align*}
    and therefore we see that 
    \begin{align*}
        0 = \log(1) &\leq \log(\CDpol{\mu_\samples}{d}(\x)) - \log(\CDpol{\mu_\sampless}{d}(\x)) \\
        &\leq \log \big( C_{n, d} \cdot \max\{\|\CDpol{\mu_\samples}{d}\|_{\infty},\|\CDpol{\mu_\sampless}{d}\|_{\infty}\} \cdot d_W(\mu_\samples, \mu_{\sampless}) +1 \big),
    \end{align*}
    and so $|\log(\CDpol{\mu_\samples}{d}(\x)) - \log(\CDpol{\mu_\sampless}{d}(\x)) |\leq \log(C_{n, d} \cdot \max\{\|\CDpol{\mu_\samples}{d}\|_{\infty},\|\CDpol{\mu_\sampless}{d}\|_{\infty}\} \cdot d_W(\mu_\samples, \mu_{\sampless}) +1)$. By an identical argument, in the case that $\CDpol{\mu_\samples}{d}(\x) \leq \CDpol{\mu_\sampless}{d}(\x)$, we also get that 
    \begin{align*}
        |\log(\CDpol{\mu_\samples}{d}(\x)) - \log(\CDpol{\mu_\sampless}{d}(\x)) | &= |\log(\CDpol{\mu_\sampless}{d}(\x)) - \log(\CDpol{\mu_\samples}{d}(\x)) | \\ &\leq \log(C_{n, d} \cdot \max\{\|\CDpol{\mu_\samples}{d}\|_{\infty},\|\CDpol{\mu_\sampless}{d}\|_{\infty}\} \cdot d_W(\mu_\samples, \mu_{\sampless}) +1)
    \end{align*}
    and so this inequality holds for all $\x \in \X$. Maximizing over all $\x \in \X$ then gives the desired statement. 
\end{proof}

\section{Details on the approximation scheme}
\label{APP: approximation}
In this section we elaborate on the approximation scheme outlined in Section~\ref{SEC:approximation}, and prove Proposition~\ref{PROP:Lipschitzstability}. Throughout, $f: [-1,1]^n \to \R$ denotes a Lipschitz continuous function with Lipschitz constant $L_f$.

Recall that a $k$-dimensional \emph{simplex} $\sigma$ in $\R^n$ is the convex hull of $k+1$ affinely independent vectors, and that the convex hull of a subset of those vectors is called a \emph{face} of $\sigma$. For $\sigma = \mathrm{conv}(v_1,\ldots,v_{k+1})$, we denote $V(\sigma) = \{v_1,\ldots,v_m\}$. A \emph{triangulation} of $[-1,1]^n$ is a collection of simplices $\triang$ which satisfy the following:
\begin{itemize}
    \item If $\sigma \in \triang$, and $\rho$ is a face of $\sigma$, then $\rho \in \triang$.
    \item For any $\sigma, \rho \in \triang$, the intersection $\sigma \cap \rho$ is a face of both $\sigma$ and $\rho$.
    \item $|\triang | := \bigcup_{\sigma \in \triang} \sigma = [-1,1]^n$
\end{itemize}
% We say that $\triang$ has diameter:
% \[
% \mathrm{diam}(\triang) := \max_{\sigma \in \triang} \mathrm{diam}(\sigma).
% \]
The first two conditions above say that $\triang$ is a \emph{simplicial complex}, and the last one ensures that $\triang$
captures the topology of $[-1,1]^n$.  We consider the following filtration on $\triang$:
For each $t \in \R$, we define
\begin{align*}
    \triang_t :=\{\sigma  \in \triang \, | \, f(v_i) \leq t \quad \forall i=1,\ldots,m\}.
\end{align*}

Clearly, if $s \leq t$, then $\triang_s \subseteq \triang_t$. The collection $\{\triang_t\}_{t\in \R}$ is called the \emph{lower-star filtration} of $\triang$ with respect to $f$. Applying \emph{simplicial} homology to this filtration defines a persistence module, denoted by $\mathrm{PH}_*(\triang,f)$, which can be computed \cite{persistencecubic,persistentcomplexity} in polynomial time. This persistence is isomorphic to the sublevel set persistent homology of a particular piece-wise linear function defined on $[-1,1]^n$, which we will describe now. If $\triang$ is a triangulation of $[-1,1^n]$, then any $\x \in [-1,1]^n$ is contained in a unique simplex $\sigma \in \triang$ of minimal dimension. Therefore, we may write $\x$ as a convex combination of vertices $v_i$ of $\sigma$:
\begin{equation} \label{EQ:convcomb}
    \x = \sum_{i=1}^k \alpha_i v_i.
\end{equation}
The piece-wise linear approximation of $f$ (with respect to $\triang$) is now defined as:
\[
    \approxf(\x) = \sum_{i=1}^k \alpha_i f(v_i),
\]
where the $v_i$ and $\alpha_i$ are as in~\eqref{EQ:convcomb}. We can relate $f$, with Lipschitz constant $L_f$, to $\overline{f}$:

\begin{proposition} \label{PROP:approxerror}
    Let $f, \approxf : [-1,1]^n \to \R$ be as in the above. Then we have:
    \[
        \|f - \approxf\|_{\infty} \leq L_f \cdot \mathrm{diam}(\triang), \text{ where }     \mathrm{diam}(\triang) := \max_{\sigma \in \triang} \mathrm{diam}(\sigma).
    \]
    Using the fact that $\mathrm{PH}_*([-1,1]^n,\overline{f})) \cong \mathrm{PH}_*(\triang,f))$ \cite{bookEdelsbrunner}, we obtain as a  direct consequence of Corollary~\ref{COR:contstab} that
    \[
        d_B(\Dgm(\mathrm{PH}_*([-1,1]^n,f)),~ \Dgm(\mathrm{PH}_*(\triang,f))) \leq L_f \cdot \mathrm{diam}(\triang).
    \]
\end{proposition}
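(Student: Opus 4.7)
The plan is to prove the sup-norm estimate $\|f - \approxf\|_\infty \leq L_f \cdot \mathrm{diam}(\triang)$ directly from the definitions, and then invoke the stability Corollary~\ref{COR:contstab} together with the cited isomorphism $\mathrm{PH}_*([-1,1]^n, \approxf) \cong \mathrm{PH}_*(\triang, f)$ to transfer this into the desired bottleneck bound.

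For the sup-norm estimate, I would fix an arbitrary $\x \in [-1,1]^n$ and let $\sigma \in \triang$ be the unique minimal-dimensional simplex containing $\x$, with vertices $v_1, \ldots, v_k \in V(\sigma)$ and barycentric coordinates $\alpha_1, \ldots, \alpha_k \geq 0$ satisfying $\sum_i \alpha_i = 1$ and $\x = \sum_i \alpha_i v_i$, so that $\approxf(\x) = \sum_i \alpha_i f(v_i)$ by definition. Using $\sum_i \alpha_i = 1$, I would write $f(\x) = \sum_i \alpha_i f(\x)$ and estimate
\begin{align*}
    |f(\x) - \approxf(\x)|
    = \Bigl| \sum_{i=1}^k \alpha_i \bigl( f(\x) - f(v_i) \bigr) \Bigr|
    \leq \sum_{i=1}^k \alpha_i \, |f(\x) - f(v_i)|
    \leq \sum_{i=1}^k \alpha_i \cdot L_f \, \|\x - v_i\|_2,
\end{align*}
where the last step uses Lipschitz continuity of $f$ with constant $L_f$. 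Since $\x$ and each $v_i$ both lie in $\sigma$, we have $\|\x - v_i\|_2 \leq \mathrm{diam}(\sigma) \leq \mathrm{diam}(\triang)$, so the sum collapses to $L_f \cdot \mathrm{diam}(\triang)$. Taking the supremum over $\x$ yields the claimed inequality.

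For the bottleneck bound, since $[-1,1]^n$ is a CW-complex and both $f$ and $\approxf$ are continuous functions on it (continuity of $\approxf$ across neighboring simplices follows from the fact that barycentric coordinates agree on common faces), Corollary~\ref{COR:contstab} gives
\begin{align*}
    d_B\bigl(\Dgm(\mathrm{PH}_*([-1,1]^n, f)),\,\Dgm(\mathrm{PH}_*([-1,1]^n, \approxf))\bigr) \leq \|f - \approxf\|_\infty \leq L_f \cdot \mathrm{diam}(\triang).
\end{align*}
Combining this with the cited isomorphism $\mathrm{PH}_*([-1,1]^n, \approxf) \cong \mathrm{PH}_*(\triang, f)$ (whose persistence diagrams therefore coincide) gives the desired inequality.

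The only nontrivial ingredient here is the identification $\mathrm{PH}_*([-1,1]^n, \approxf) \cong \mathrm{PH}_*(\triang, f)$ between the sublevel-set filtration of the piecewise-linear interpolant and the simplicial lower-star filtration, which is standard (see~\cite{bookEdelsbrunner}) and is invoked as a black box. Everything else is a one-line estimate using convexity of the simplex and Lipschitz continuity, so there is no substantive obstacle; the main care is simply to verify that $\approxf$ is well-defined on faces shared by multiple simplices, which holds because the piecewise-linear interpolant is determined by vertex values and barycentric coordinates match on common faces.
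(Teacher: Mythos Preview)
Your proof is correct and essentially identical to the paper's: both expand $f(\x)-\approxf(\x)$ using the barycentric decomposition $\sum_i \alpha_i = 1$, apply the triangle inequality and Lipschitz bound termwise, and then bound $\|\x - v_i\|_2$ by $\mathrm{diam}(\triang)$. The paper omits your additional remarks on well-definedness of $\approxf$ and simply cites the isomorphism for the bottleneck consequence, but the core argument is the same.
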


\begin{proof}
    For any $\x \in \X$, we write $\x = \sum_{i=1}^k \alpha_i v_i$ as in \ref{EQ:convcomb}. We then see that:
    \begin{align*}
        |f(\x) - \approxf(\x)| &= |f(\x) - \sum_{i =1} ^k \alpha_i f(v_i)|
        = | \sum_{i =1} ^k \alpha_i \cdot (f(\x) - f(v_i))| \\
        &\leq  \sum_{i =1} ^k \alpha_i \cdot  |f(\x) - f(v_i)| 
        \leq \sum_{i =1} ^k \alpha_i \cdot L_f \cdot \|\x - v_i\|_2 \\
        &\leq \sum_{i =1} ^k \alpha_i \cdot L_f \cdot \mathrm{diam}(\triang) = L_f \cdot \mathrm{diam}(\triang). \qedhere
    \end{align*}
\end{proof}

We now explain how we triangulate the hypercube. For each $m \in \N$, we construct a triangulation $\triang_m$ of $[-1,1]^n$. The cube contains a regular grid $\Gamma_m$ of $(m+1)^n$ points, consisting of the lattice points of $\frac{2}{m} \cdot \mathbb{Z}$ contained in $[-1,1]^n$. We define $\triang_m$ to be the Freudenthal triangulation \cite{Freudenthal,eaves1984course} of $[-1,1]^n$ using the grid $\Gamma_m$ as vertex set. The Freudenthal triangulation of $[-1,1]^n $ triangulates each of the hypercubes of width $2/m$ defined by the grid $\Gamma_m$. See Figure~\ref{FIG:Freudenthal} for the case $n=2$, $m=3$. In each of these small hypercubes, the diagonal is part of the triangulation, so it follows that $\mathrm{diam}(\triang_m) = 2 \sqrt{n}/m$.  Together with Proposition~\ref{PROP:approxerror} this gives us: 

\begin{proposition}[Restatement of Prop.~\ref{PROP:Lipschitzstability}]
    Let $f: [-1,1]^n \to \R$ be a Lipschitz continuous function with Lipschitz constant $L_f$, choose $m\in \N$, and let $\triang_m$ be as above. Then
    \begin{align*}
        d_B( \Dgm(\mathrm{PH}_*([-1,1]^n,f)),~ \Dgm(\mathrm{PH}_*(\triang_m,f)) ) \leq L_f \cdot  2 \sqrt{n}/m.
    \end{align*}
\end{proposition}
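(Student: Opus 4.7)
The plan is to reduce the statement to Theorem~\ref{COR:contstab} via the standard piecewise-linear interpolation trick. Concretely, I would introduce the piecewise-linear interpolant $\overline{f} : [-1,1]^n \to \R$ of $f$ with respect to $\triang_m$: each $\x \in [-1,1]^n$ lies in a unique minimal simplex $\sigma \in \triang_m$ with vertices $v_1, \ldots, v_k$, admitting a convex combination decomposition $\x = \sum_{i=1}^k \alpha_i v_i$, and then setting $\overline{f}(\x) := \sum_{i=1}^k \alpha_i f(v_i)$. The key observation, which I would either cite from Edelsbrunner--Harer or spell out briefly, is that the sublevel set persistent homology of $\overline{f}$ on $[-1,1]^n$ is isomorphic to the lower-star persistent homology of $f$ on $\triang_m$, i.e.\ $\mathrm{PH}_*([-1,1]^n, \overline{f}) \cong \mathrm{PH}_*(\triang_m, f)$.

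Next, I would bound $\|f - \overline{f}\|_\infty$ using the Lipschitz hypothesis. For any $\x \in [-1,1]^n$ decomposed as above, since $\sum \alpha_i = 1$ and each $v_i$ lies in $\sigma$, we get
\begin{align*}
|f(\x) - \overline{f}(\x)| &= \Bigl|\sum_{i=1}^k \alpha_i \bigl(f(\x) - f(v_i)\bigr)\Bigr|  \leq \sum_{i=1}^k \alpha_i \cdot L_f \cdot \|\x - v_i\|_2 \\
&\leq L_f \cdot \mathrm{diam}(\sigma) \leq L_f \cdot \mathrm{diam}(\triang_m).
\end{align*}
It remains to compute $\mathrm{diam}(\triang_m)$ for the Freudenthal triangulation on the lattice $\frac{2}{m} \cdot \mathbb{Z}^n \cap [-1,1]^n$: each simplex of $\triang_m$ is contained in a small axis-aligned hypercube of side length $2/m$, so its diameter is at most the diagonal length $2\sqrt{n}/m$, and this is attained by the main diagonal which belongs to the triangulation.

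Combining these, $\|f - \overline{f}\|_\infty \leq L_f \cdot 2\sqrt{n}/m$, and applying Theorem~\ref{COR:contstab} to the pair $(f, \overline{f})$ on the CW-complex $[-1,1]^n$ yields
\[
d_B\bigl(\Dgm(\mathrm{PH}_*([-1,1]^n, f)),~\Dgm(\mathrm{PH}_*([-1,1]^n, \overline{f}))\bigr) \leq L_f \cdot 2\sqrt{n}/m.
\]
Using the isomorphism $\mathrm{PH}_*([-1,1]^n, \overline{f}) \cong \mathrm{PH}_*(\triang_m, f)$ on the right-hand side gives exactly the claim.

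The only non-routine step is the isomorphism between the continuous sublevel set persistence of $\overline{f}$ and the simplicial lower-star persistence of $f$ on $\triang_m$; this is standard (the sublevel sets of $\overline{f}$ deformation retract onto the subcomplexes appearing in the lower-star filtration), but it is where a precise citation is needed rather than a direct calculation. The Lipschitz estimate and the diameter computation for the Freudenthal triangulation are both elementary.
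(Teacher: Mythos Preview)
Your proposal is correct and follows essentially the same approach as the paper: introduce the piecewise-linear interpolant $\overline{f}$, bound $\|f-\overline{f}\|_\infty$ by $L_f\cdot\mathrm{diam}(\triang_m)$ via the convex-combination estimate, identify $\mathrm{PH}_*([-1,1]^n,\overline{f})\cong\mathrm{PH}_*(\triang_m,f)$ (the paper cites \cite{bookEdelsbrunner} for this), and then apply Theorem~\ref{COR:contstab} together with $\mathrm{diam}(\triang_m)=2\sqrt{n}/m$. The only difference is organizational: the paper packages the first two steps as a separate proposition before specializing to the Freudenthal triangulation.
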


Since the function $\log\CDpol{\mu_\samples}{d} : [-1,1]^n \to \R$ is differentiable and $[-1,1]^n$ is compact, $\log\CDpol{\mu_\samples}{d}$ is Lipschitz continuous, and so the above proposition applies to our setting. The proposition tells us that the bottleneck distance between the persistence diagram we obtain, and $\Dgm(\persmod{d}{\samples})$, is at most $L_{\log\CDpol{\mu_\samples}{d}} \cdot 2 \sqrt{n}/m$. We also remark that the construction of $\triang_m$, and the computation of the persistent homology of the lower-star filtration, can be done in polynomial time.

\subsection{Numerical validation}
Proposition~\ref{PROP:Lipschitzstability} tells us that our approximation converges to the real Christoffel-Darboux persistence as $m$ increases. However, we do not have explicit access to the Lipschitz constant of $\CDpol{\mu_{\samples}}{d}$, and in practice, this number may be rather large. It is possible to give a slightly sharper bound than the one presented in Proposition~\ref{PROP:Lipschitzstability}, but this still does not give a satisfactory guarantee on the quality of our approximation.
Nonetheless, the approximation turns out to converge quickly as the resolution $m$ increases. We illustrate this with two examples. 

In Figure~\ref{FIG:ThreeShapesResIncrease}, we show the change in Bottleneck distance between subsequent diagrams $\mathrm{PH}_*(\triang_m, ~\log \CDpol{\mu_{\samples}}{12})$ as we increase the resolution $m$ in increments of $10$. Here, $\samples \subseteq [-1, 1]^n$ is drawn as in Figure~\ref{FIG:fourballs}. We observe that the distance between subsequent diagrams tends to zero quickly, both for a pure sample, and for a sample with uniform noise.

In Figure~\ref{FIG:CubeResIncrease}, we show the change in observed signal-to-noise ratios in $\persmod{d}{\samples}$ for samples $\samples$ drawn from the cube skeleton with different amounts of uniform noise (cf. Table~\ref{TAB:signal-to-noise}) as we increase the resolution $m$. We note that in almost all cases, these ratios stabilize at or before $m=50$.

\begin{figure}[h]
    \centering
    \begin{subfigure}{0.44\textwidth}
    \includegraphics[width=\textwidth, trim= 0.5cm 0cm 1cm 0cm, clip]{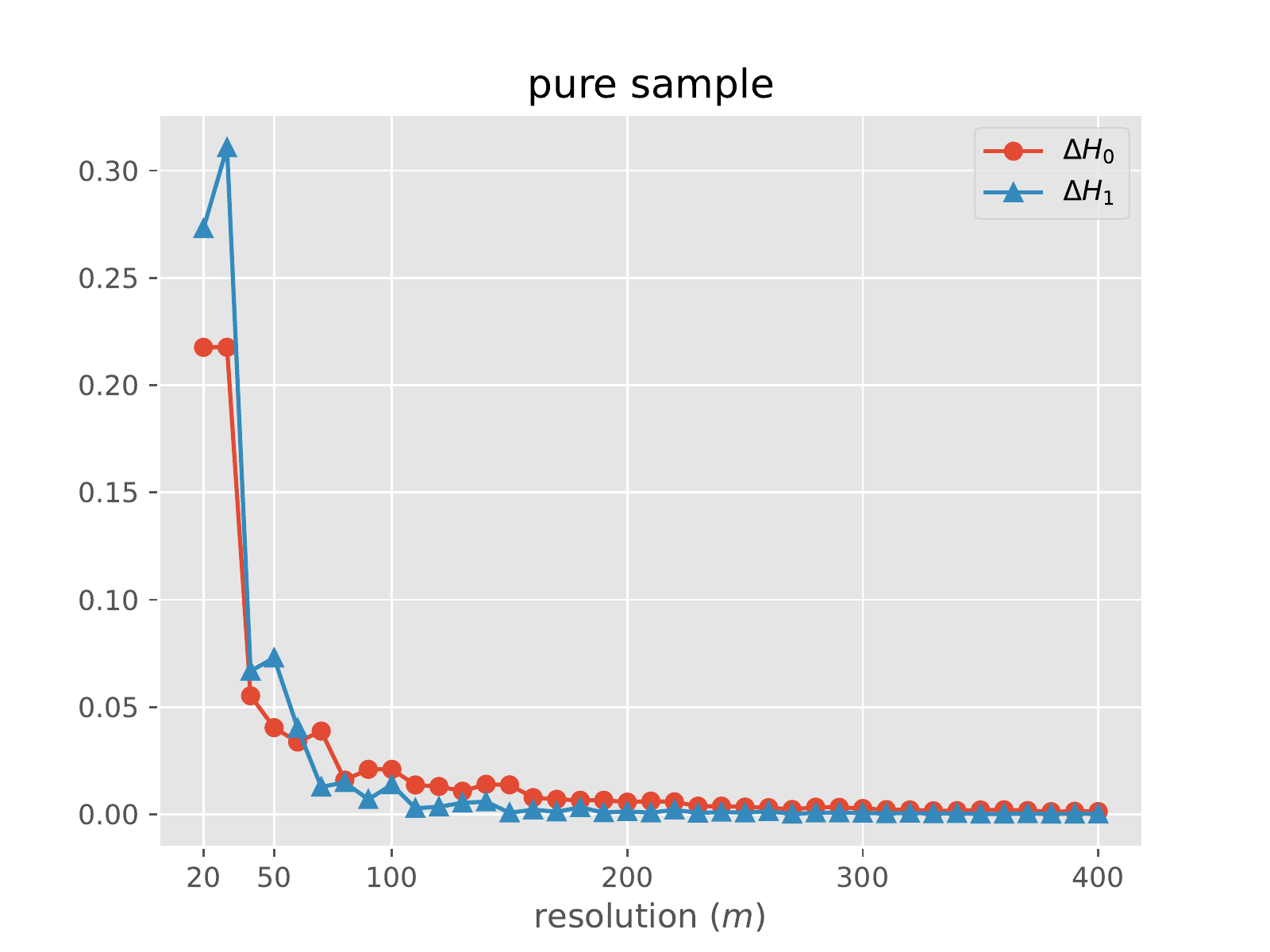}
    \end{subfigure}
    \begin{subfigure}{0.44\textwidth}
    \includegraphics[width=\textwidth, trim=0.5cm 0cm 1cm 0cm, clip]{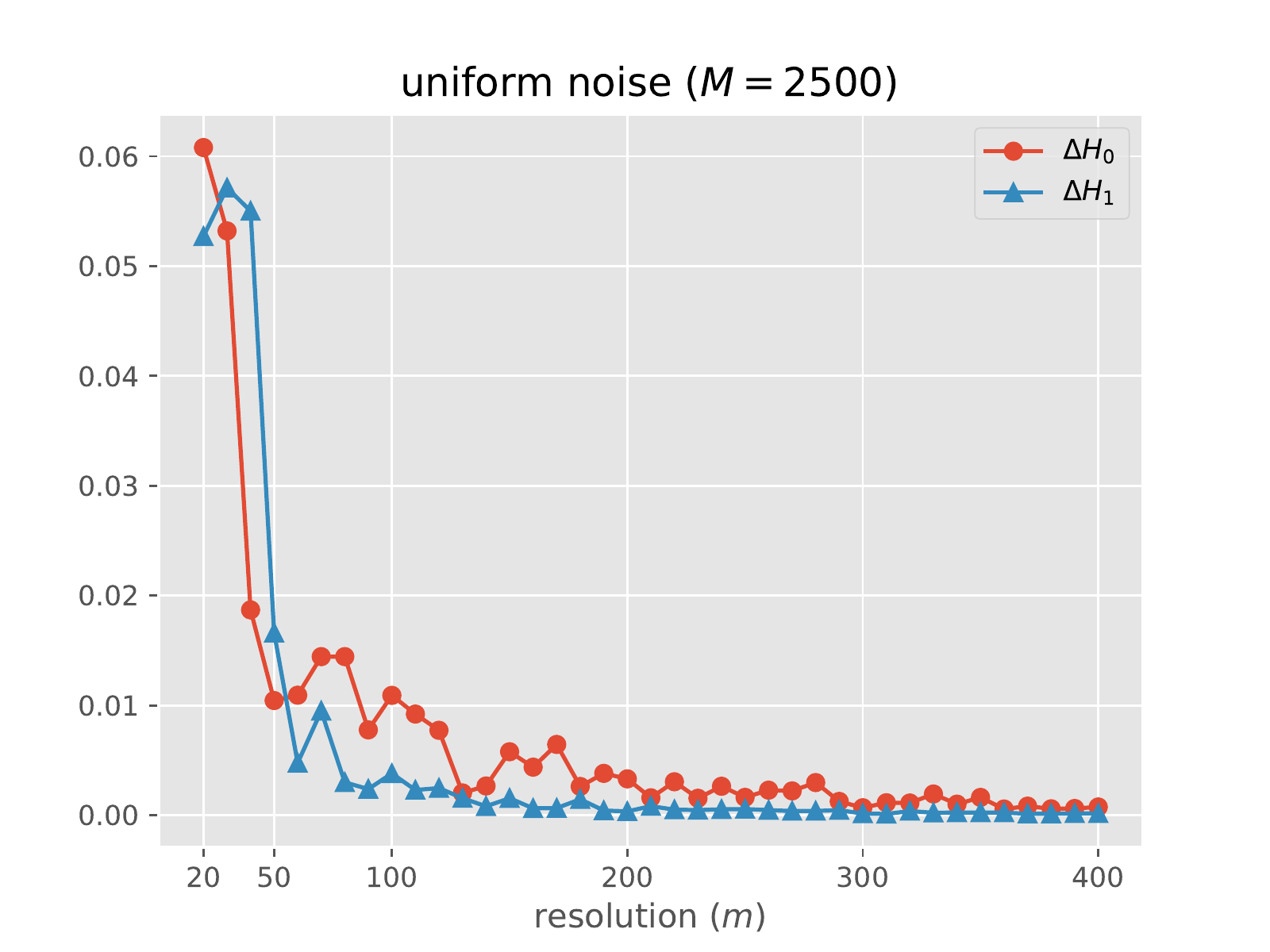}
    \end{subfigure}    \caption{Change in bottleneck distance for $\persmod{12}{\samples}$ as the resolution of the approximation is increased ($\Delta m = 10$). Here, $\samples \subseteq [-1, 1]^2$ is as in Figure~\ref{FIG:fourballs}.}
    \label{FIG:ThreeShapesResIncrease}
\end{figure}

\begin{figure}
    \centering
    \begin{subfigure}{0.44\textwidth}
    \centering
    \includegraphics[width=\textwidth, trim= 0.5cm 0cm 1cm 0cm, clip]{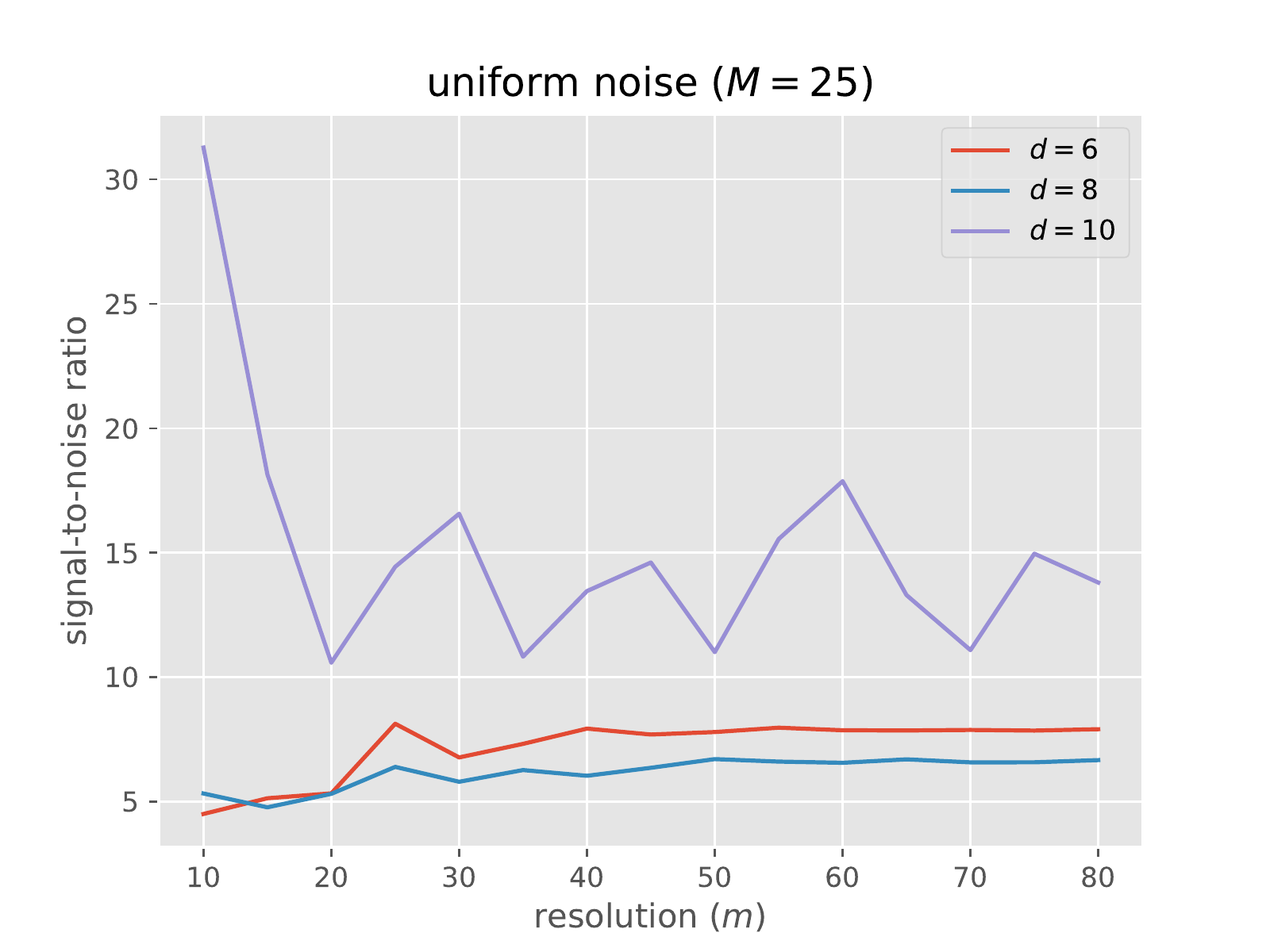}
    \end{subfigure}
    \begin{subfigure}{0.44\textwidth}
    \centering
    \includegraphics[width=\textwidth, trim=0.5cm 0cm 1cm 0cm, clip]{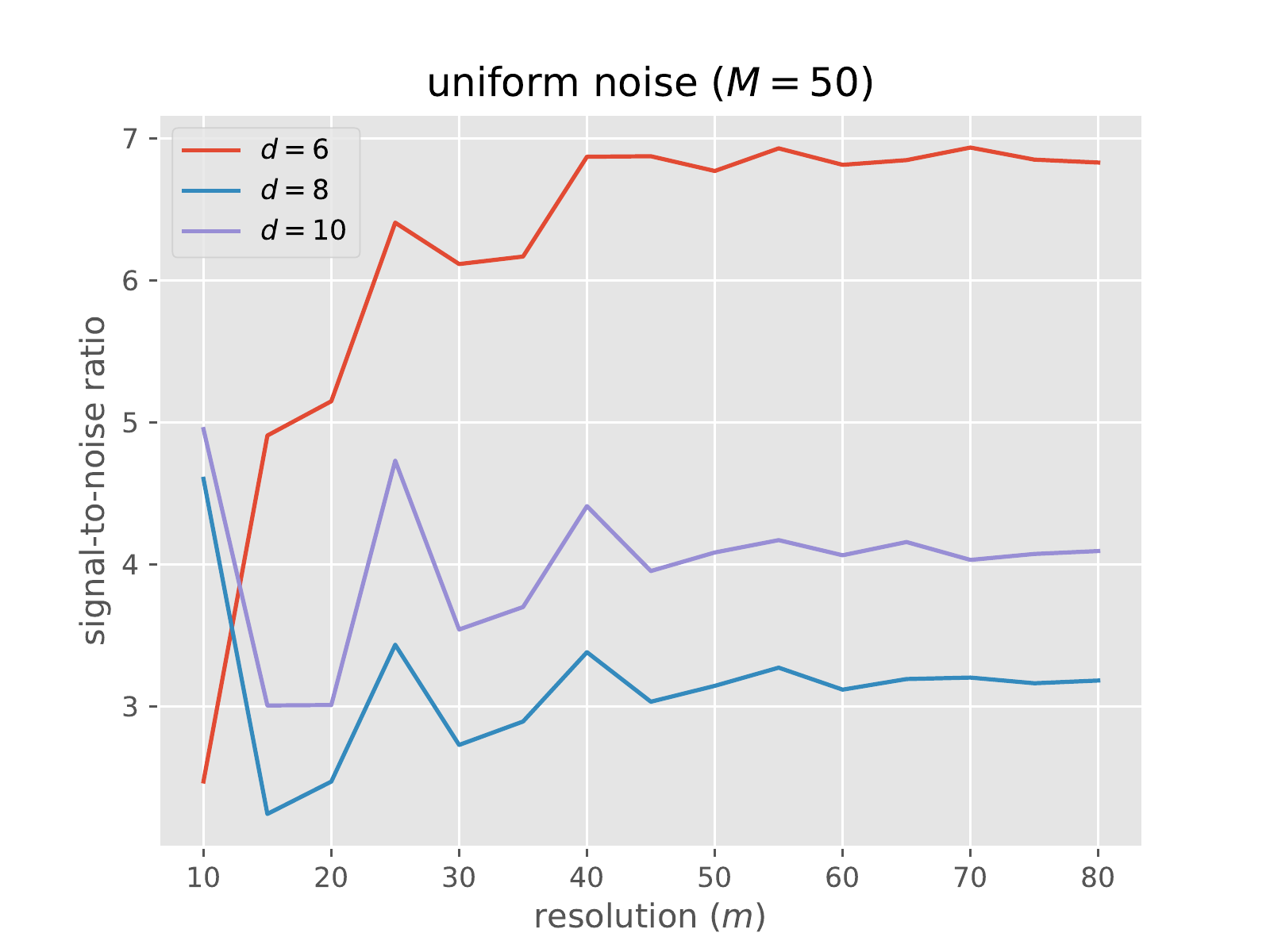}
    \end{subfigure}
    \begin{subfigure}{0.44\textwidth}
    \centering
    \includegraphics[width=\textwidth, trim= 0.5cm 0cm 1cm 0cm, clip]{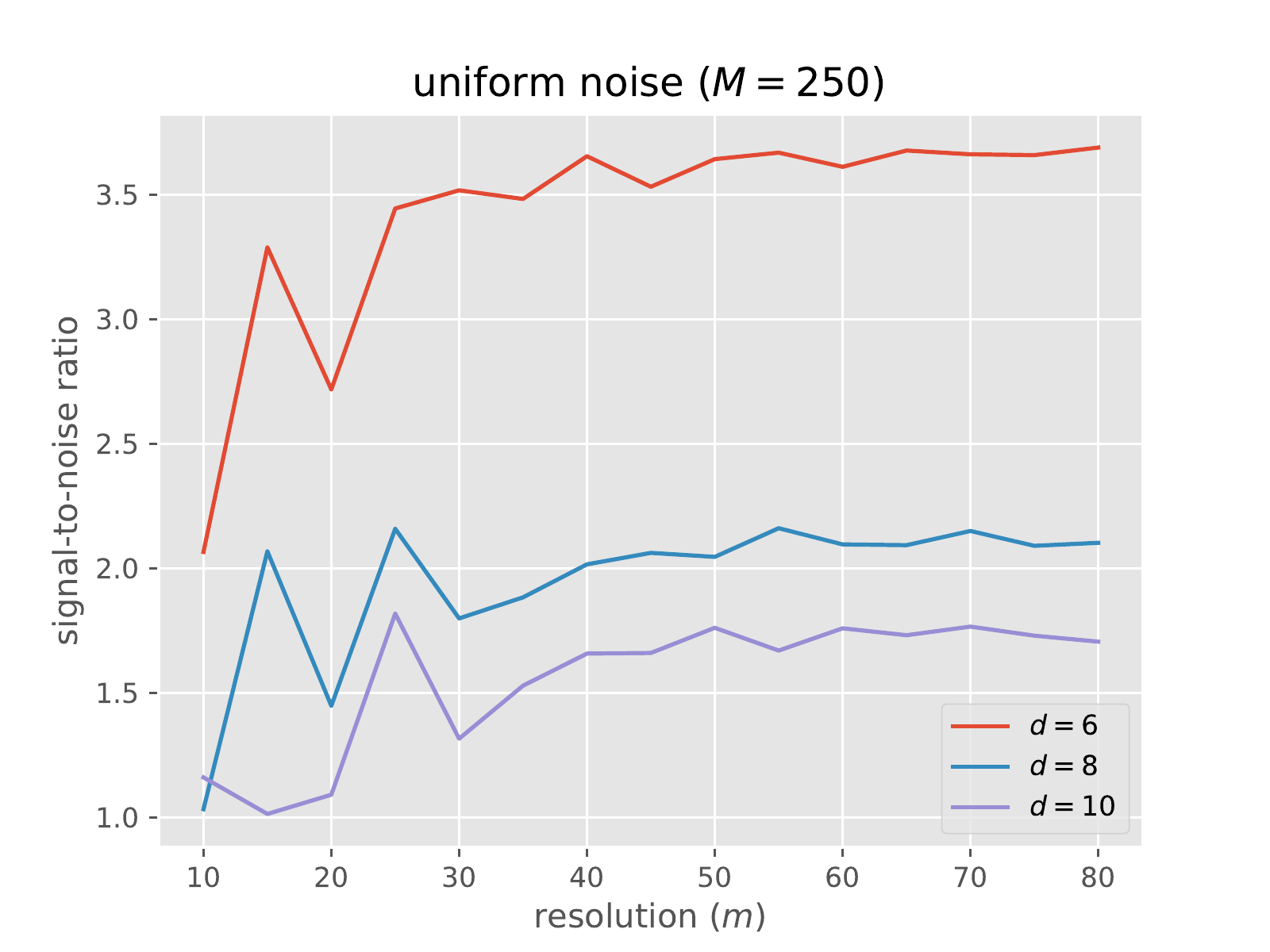}
    \end{subfigure}
    \begin{subfigure}{0.44\textwidth}
    \centering
    \includegraphics[width=\textwidth, trim= 0.5cm 0cm 1cm 0cm, clip]{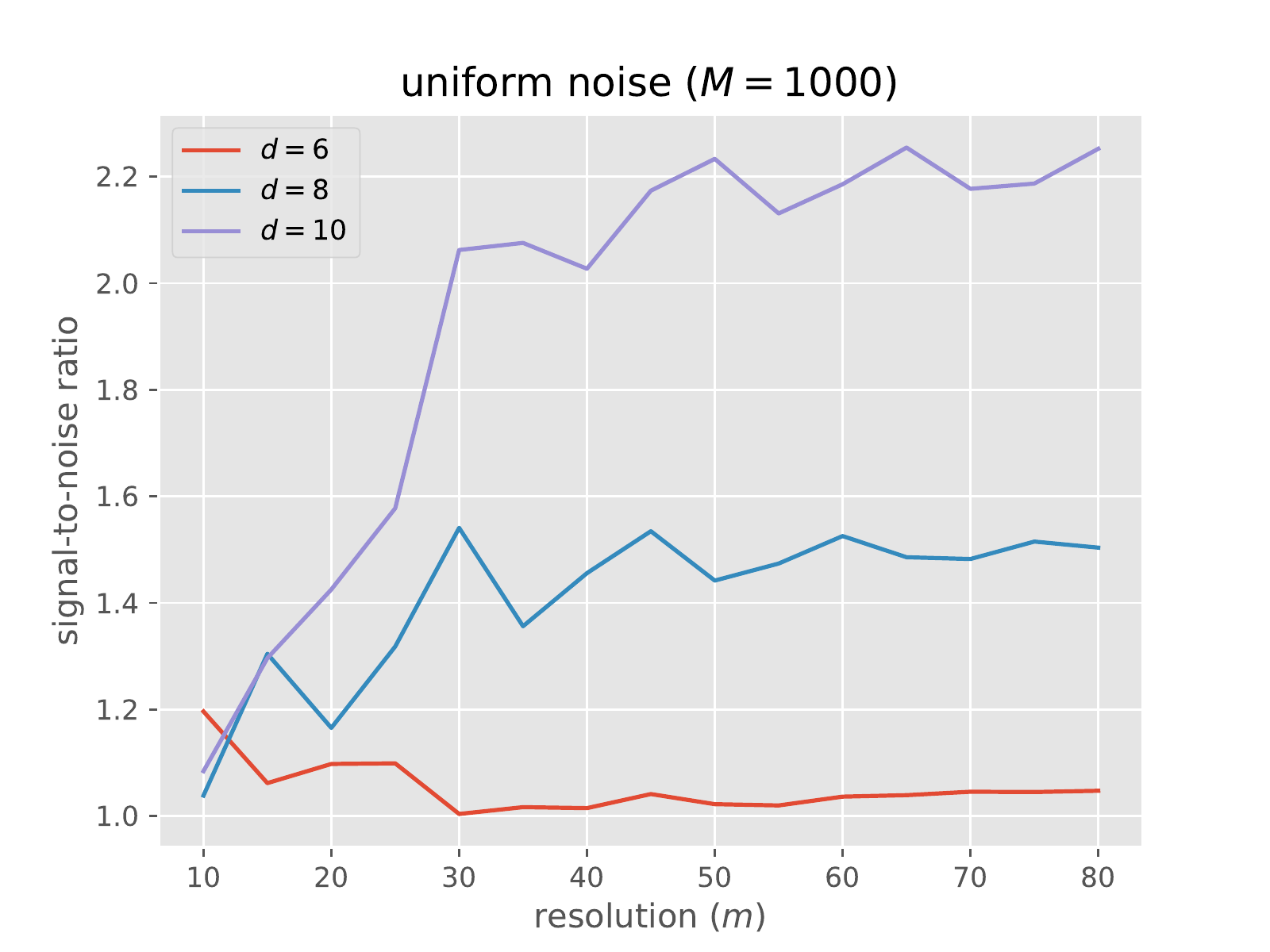}
    \end{subfigure}
    \caption{Observed signal-to-noise ratios in $H_1$ for $\persmod{d}{\samples}$ for the cube skeleton in $[-1, 1]^3$ with uniform noise as the resolution $m$ of the approximation scheme is increased (single experiment).}
    \label{FIG:CubeResIncrease}
\end{figure}

\end{document}